\newcommand{\pd}[3]{\frac{\partial ^{#1} #2}{\partial #3}}
\newcommand*{\grad}{\nabla}
\newcommand{\Lap}{\Delta}
\newcommand{\Ph}{\calP_h} 
\newcommand{\bs}[1]{\boldsymbol{#1}}
\newcommand{\nrm}[2]{\ensuremath{\|#1\|_{#2}}}
\newcommand{\pdy}{\partial_{\bsy}}
\newcommand{\cost}{\mathrm{cost}}
\newcommand{\diam}{\mathrm{diam}}
\newcommand{\N}[0]{\mathbb{N}}
\newcommand{\R}[0]{\mathbb{R}}
\newlength\figureheight
\newlength\figurewidth
\newcommand{\BIG}{\bBigg@{3}}
\newcommand{\vast}{\bBigg@{4}}
\newcommand{\Vast}{\bBigg@{5}}
\newcommand{\Qhat}[0]{\ensuremath{\widehat{Q}}}
\newcommand{\Vhat}[0]{\ensuremath{\widehat{\bbV}}}
\newcommand{\Qml}[0]{\ensuremath{Q^\mathrm{ML}}}
\newcommand{\Qhatml}[0]{\ensuremath{\Qhat^\mathrm{ML}}}
\newtheorem{theorem}{Theorem}[section]
\newtheorem{lemma}{Lemma}[section]
\newtheorem{corollary}{Corollary}[section]
\newtheorem{proposition}[theorem]{Proposition}
\newtheorem{assumption}{Assumption A$\!\!\!$}
\theoremstyle{definition}
\newtheorem{remark}{Remark}[section]
\theoremstyle{plain}
\numberwithin{equation}{section}
\newcommand{\bsbeta}{{\boldsymbol{\beta}}}
\newcommand{\bsDelta}{{\boldsymbol{\Delta}}}
\newcommand{\bspsi}{{\boldsymbol{\psi}}}
\newcommand{\bse}{{\boldsymbol{e}}}
\newcommand{\bsgamma}{{\boldsymbol{\gamma}}}
\newcommand{\bsnu}{{\boldsymbol{\nu}}}
\newcommand{\bsm}{{\boldsymbol{m}}}
\newcommand{\bstau}{{\boldsymbol{\tau}}}
\newcommand{\bsh}{{\boldsymbol{h}}}
\newcommand{\bsk}{{\boldsymbol{k}}}
\newcommand{\bst}{{\boldsymbol{t}}}
\newcommand{\bsx}{{\boldsymbol{x}}}
\newcommand{\bsy}{{\boldsymbol{y}}}
\newcommand{\bsz}{{\boldsymbol{z}}}
\newcommand{\bszero}{{\boldsymbol{0}}}
\newcommand{\rd}{\,\mathrm{d}}
\newcommand{\bbE}{\mathbb{E}}
\newcommand{\bbV}{\mathbb{V}}
\newcommand{\calA}{\mathcal{A}}
\newcommand{\calI}{\mathcal{I}}
\newcommand{\calM}{\mathcal{M}}
\newcommand{\calF}{\mathcal{F}}
\newcommand{\calG}{\mathcal{G}}
\newcommand{\calO}{\mathcal{O}}
\newcommand{\calP}{\mathcal{P}}
\newcommand{\calW}{\mathcal{W}}
\def\R{\mathbb{R}}
\newcommand{\setu}{{\mathrm{\mathfrak{u}}}}
\newcommand{\setv}{{\mathrm{\mathfrak{v}}}}
\newcommand{\mask}[1]{{}}
\definecolor{darkred}{RGB}{139,0,0}
\definecolor{darkgreen}{RGB}{0,100,0}
\definecolor{darkmagenta}{RGB}{170,0,120}
\definecolor{darkpurple}{RGB}{110,0,180}
\definecolor{darkblue}{RGB}{40,0,200}
\definecolor{darkbrown}{rgb}{0.75,0.40,0.15}
\definecolor{grey}{RGB}{59,61,63}
\newcommand{\be}{\begin{equation}}
\newcommand{\ee}{\end{equation}}
\newcommand{\bea}{\begin{eqnarray}}
\newcommand{\eea}{\end{eqnarray}}
\newcommand{\beas}{\begin{eqnarray*}}
\newcommand{\eeas}{\end{eqnarray*}}
\def\r2p{{\sqrt{2\pi}}}
\newcommand*{\CI}{\ensuremath{C_\mathrm{I}}}
\newcommand*{\CII}{\ensuremath{C_\mathrm{II}}}
\newcommand*{\CIII}{\ensuremath{C_\mathrm{III}}}
\newcommand*{\Chat}{\ensuremath{\widehat{C}}}
\newcommand*{\evalueLap}{\ensuremath{\chi_1}}
\newcommand*{\amin}{\ensuremath{a_{\min}}}
\newcommand*{\amax}{\ensuremath{a_{\max}}}
\newcommand*{\betabar}{\ensuremath{\overline{\beta}}}
\newcommand*{\betahat}{\ensuremath{\widehat{\beta}}}
\newcommand*{\bsbetabar}{\ensuremath{\overline{\bsbeta}}}
\newcommand*{\bsbetahat}{\ensuremath{\widehat{\bsbeta}}}
\newcommand*{\lambdabar}{\ensuremath{\overline{\lambda}}}
\newcommand*{\ubar}{\ensuremath{\overline{u}}}
\newcommand*{\hsuff}{\ensuremath{\overline{h}}}
\newcommand{\indx}{{\calF}}
\newcommand{\Ws}[0]{\ensuremath{\calW_{s, \bsgamma}}}
\newcommand{\bigO}{\mathcal{O}}
\title{Multilevel quasi-Monte Carlo for random elliptic eigenvalue problems I: Regularity and error analysis}
\date{\today}
\let\@fnsymbol\@arabic
\author{Alexander D. Gilbert\footnotemark[1]
				\and
             Robert Scheichl\footnotemark[2]
             }
\begin{document}

\maketitle

\footnotetext[1]{School of Mathematics and Statistics, University of New South Wales, 
                           Sydney NSW 2052, Australia.\\
                           \texttt{alexander.gilbert@unsw.edu.au}
                           }
\footnotetext[2]{Institute for Applied Mathematics \& Interdisciplinary Centre  for Scientific Computing,
								 Universit\"at Heidelberg, 
                          69120 Heidelberg, Germany and
                          Department of Mathematical Sciences, University of Bath, Bath BA2 7AY UK.\\
                          \texttt{r.scheichl@uni-heidelberg.de}
                          }

\begin{abstract}
{Stochastic PDE eigenvalue problems are useful models for quantifying the uncertainty in several
applications from the physical sciences and engineering, e.g.,
structural vibration analysis, the criticality of a nuclear reactor or 
photonic crystal structures.
In this paper we present a multilevel quasi-Monte Carlo (MLQMC) method for
approximating the expectation of the minimal eigenvalue of an elliptic eigenvalue problem
with coefficients that are given as a series expansion of countably-many stochastic parameters.
The MLQMC algorithm is based on a hierarchy of discretisations of the spatial domain 
and truncations of the dimension of the stochastic parameter domain.
To approximate the expectations,
randomly shifted lattice rules are employed.
This paper is primarily dedicated to giving a rigorous analysis of the 
error of this algorithm.
A key step in the error analysis requires bounds on the mixed derivatives of
the eigenfunction with respect to both the stochastic and spatial variables simultaneously.
Under stronger smoothness assumptions on the parametric dependence,
our analysis also extends to multilevel higher-order quasi-Monte Carlo rules.
An accompanying paper [Gilbert and Scheichl, 2022], 
focusses on practical extensions of the MLQMC algorithm to improve efficiency,
and presents numerical results.}
\end{abstract} 

\section{Introduction}
Consider the following elliptic eigenvalue problem (EVP)
\begin{align}
\label{eq:evp}
\nonumber 
-\nabla\cdot\big(a(\bsx, \bsy)\,\nabla u(\bsx, \bsy)\big) 
+ b(\bsx, \bsy)\,u(\bsx, \bsy)
&= 
\lambda(\bsy) \,c(\bsx,\bsy ) \,u(\bsx, \bsy), \ \  &\text{for } \bsx \in D,
\\
u(\bsx, \bsy) \,&=\, 0 \quad &\text{for }\bsx \in \partial D,
\end{align}
where the differential operator $\nabla$ is with respect to the 
{\em physical variable} $\bsx$, which belongs to a bounded, convex domain 
$D \subset \R^d$ ($d = 1, 2, 3$), 
and where the {\em stochastic parameter}
\begin{equation*}
\bsy \,=\, (y_j)_{j \in \N} \in \Omega \coloneqq [-\tfrac{1}{2}, \tfrac{1}{2}]^\N ,
\end{equation*}
is an infinite-dimensional vector of independently and identically distributed (i.i.d.)
uniform random variables on $[-\tfrac{1}{2}, \tfrac{1}{2}]$.

The dependence of the coefficients on the stochastic parameters carries through
to the eigenvalues $\lambda(\bsy)$, and corresponding eigenfunctions 
$u(\bsy) \coloneqq u(\cdot, \bsy)$, and as such, in this paper we are interested
in computing statistics of the eigenvalues and of linear functionals of the 
corresponding eigenfunction.
In particular, we would like to compute the expectation, with respect to the
countable product of uniform densities, of the smallest eigenvalue $\lambda$, 
which is an infinite-dimensional integral defined as
\[
\bbE_\bsy[\lambda] \,=\, \int_{\Omega} \lambda(\bsy) \, \rd\bsy
\,\coloneqq\, \lim_{s \to \infty} \int_{[-\frac{1}{2}, \frac{1}{2}]^s}
\lambda(y_1, y_2, \ldots, y_s, 0, 0 \ldots)\, \rd y_1 \rd y_2 \cdots \rd y_s.
\]

The multilevel Monte Carlo (MLMC) method \cite{Giles08a,Hein01} is a variance reduction 
scheme that has been successfully applied to many stochastic simulation problems.
When applied to stochastic PDE problems (see, e.g., \cite{BarSchwZol11,ClifGilSchTeck11}), 
the MLMC method is based on a hierarchy
of $L + 1$ increasingly fine finite element meshes $\{\mathscr{T}_\ell\}_{\ell = 0}^L$
(corresponding to a decreasing sequence of meshwidths $h_0 > h_1 > \cdots > h_L> 0$),
and an increasing sequence of truncation dimensions $s_0 < s_1 < \cdots < s_L < \infty$.
Letting the dimension-truncated FE approximation on level $\ell$ be denoted by
$\lambda_\ell \coloneqq \lambda_{h_\ell, s_\ell}$, by linearity, we can write the 
expectation on the finest level as
\begin{equation}
\label{eq:tele_sum}
\bbE_\bsy [ \lambda_L] \,=\, \bbE_\bsy[\lambda_0] + \sum_{\ell = 1}^L \bbE_\bsy[\lambda_\ell - \lambda_{\ell - 1}].
\end{equation}
Each expectation $\bbE_\bsy[ \lambda_\ell - \lambda_{\ell - 1}]$ is then approximated
by an independent Monte Carlo method.
Defining $u_\ell \coloneqq u_{h_\ell, s_\ell}$ we can write a similar telescoping sum for
$\bbE_\bsy[\calG(u_L)]$ for any linear functional $\calG(u)$.

Quasi-Monte Carlo (QMC) methods are equal-weight quadrature rules where the samples are
deterministically chosen to be well-distributed, see \cite{DKS13}.
Multilevel quasi-Monte Carlo (MLQMC) methods, whereby a QMC quadrature rule
to approximate the expectation on each level,
were first developed in \cite{GilesWater09} 
for path simulation with applications in option pricing and then later applied to 
stochastic PDE problems (e.g., \cite{KSS15,KSSSU17}).
For certain problems, MLQMC methods can be shown to converge faster than 
their Monte Carlo counterpart, and for most problems the gains from
using multilevel and QMC are complementary.

In this paper we present a rigorous analysis of the error of a MLQMC algorithm for
approximating the expectation of the smallest eigenvalue of \eqref{eq:evp}
in the case where the coefficients are given
by a Karhunen--Lo\`{e}ve type series expansion.
The main result proved in this paper is that under some common assumptions
on the summability of the terms in the coefficient expansion,
the root-mean-square error (RMSE) of a MLQMC approximation of $\bbE_\bsy[\lambda]$,
which on each level $\ell = 0, 1, \ldots, L$
uses a randomly shifted lattice rule with $N_\ell$ points, a FE discretisation with meshwidth $h_\ell > 0$
and a fixed truncation dimension,
is bounded by
\begin{equation}
\label{eq:err-intro}
\text{RMSE} \,\lesssim\, 
h_L^2 + \sum_{\ell  = 0}^L N_\ell^{-1 + \delta} h_\ell^2,
\quad \text{for } \delta > 0,
\end{equation}
with a similar result for the eigenfunction (see Theorems~\ref{thm:ML_abstract_lam},
\ref{thm:ML_abstract_G} and Remark~\ref{rem:err-params}).
This error bound is clearly better than the corresponding result for a MLMC method,
which has $N_\ell^{-1 + \delta}$ replaced by $N_\ell^{-1/2}$,
and in terms of the overall complexity
to achieve a RMSE less than some tolerance $\varepsilon > 0$ the total cost compared to a single level 
QMC approximation is reduced by a factor of $\varepsilon^{-1}$ in spatial dimensions $d \geq 2$
(see Corollary~\ref{cor:complexity}).
Under equivalent assumptions, the convergence rates in \eqref{eq:err-intro} coincide
with the rates in the corresponding error bound 
for source problems from \cite{KSS15,KSSSU17}.
Although it is not unexpected that we are able to 
obtain the same convergence rates as for source problems, the
analysis here is completely new and because of the nonlinear nature of eigenvalue problems
presents several added difficulties not encountered previously in the analysis of source problems. 
Indeed, the key intermediate step is an in-depth analysis
of the mixed regularity of the eigenfunction, simultaneously 
in both the spatial and stochastic variables.
The result, presented in Theorem~\ref{thm:regularity_y}, 
is a collection of explicit bounds on the mixed derivatives of the eigenfunction,
where the derivatives are second order with respect to the spatial variable $\bsx$ and 
arbitrarily high order with respect to the stochastic variable $\bsy$.
The proof of these bounds forms a substantial proportion of this paper, 
and requires a delicate multistage induction
argument along with a considerable amount of technical analysis 
(see Section~\ref{sec:reg} and the Appendix). 
These bounds significantly extend the previous regularity results for stochastic EVPs 
from \cite{AS12}, which  didn't give any bounds on the derivatives,
and \cite{GGKSS19}, which gave bounds that were first-order with respect to $\bsx$
and higher order with respect to $\bsy$.
Furthermore,
many other multilevel methods require similar mixed regularity bounds for their analysis,
e.g., multilevel stochastic collocation \cite{TeckJanWebGunz15}.
Hence, the bounds are of independent interest and open the door
for further research into methods for uncertainty quantification for stochastic 
EVPs.
In particular, we show how the mixed regularity bounds can be immediately applied to extend the
analysis also to multilevel quasi-Monte Carlo methods for EVPs based on 
\emph{higher-order} interlaced polynomial 
lattice rules \cite{Dick08,GodaDick15}, following the papers 
\cite{DKLeGNS14,DKLeGS16} for source problems (see Section~\ref{sec:hoqmc}).

The focus of this paper is the theoretical analysis of our MLQMC algorithm for EVPs. As such numerical results and practical details on how to efficiently implement
the algorithm will be given in a separate paper \cite{GS21b}.

EVPs provide a useful way to model problems from a diverse range of
applications, such as structural vibration analysis \cite{Thom81}, 
the nuclear criticality problem \cite{DH76,JC13,W66} and 
photonic crystal structures \cite{D99,GG12,K01,NS12}.
More recently, interest in stochastic EVPs has been driven 
by a desire to quantify the uncertainty in applications such as nuclear physics
\cite{AI10,AEHW12,W10,W13}, structural analysis \cite{ShinAst72}
and aerospace engineering \cite{QiuLyu20}.
The most widely used numerical methods for stochastic EVPs 
are Monte Carlo methods \cite{ShinAst72}.
More recently stochastic collocation methods \cite{AS12} and stochastic 
Galerkin/polynomial chaos methods \cite{GhanGhosh07,W10,W13} have been developed. 
In particular, to deal with the high-dimensionality of the parameter
space, sparse and low-rank methods have been considered, see \cite{AS12,ElmSu19,GrubHakLaak19,HakKaarLaak15,HakLaak19}. 
Additionally, the present authors (along with colleagues) 
have applied quasi-Monte Carlo methods to \eqref{eq:evp} and 
proved some key properties of the minimal eigenvalue and its corresponding eigenfunction, 
see \cite{GGKSS19,GGSS20}.

Although we consider the smallest eigenvalue, the MLQMC method and analysis
in this paper can easily be extended to any simple eigenvalue that is well-separated
from the rest of the spectrum for all parameters $\bsy$.
If the quantity of interest depends on a cluster of eigenvalues,
or on the corresponding subspace of eigenfunctions, then, in principle,  
the method in this paper could be used in conjunction with a subspace-based eigensolver.
Again, one important point for the theory would be that the eigenvalue cluster
is well-separated from the rest of the spectrum, uniformly in $\bsy$.

The structure of the paper is as follows. In Section~\ref{sec:background} we give a
brief summary of the required mathematical material.
Then in Section~\ref{sec:mlqmc-alg} we present the MLQMC algorithm along with
a cost analysis. Section~\ref{sec:reg} proves the key regularity bounds, which are then
required for the error analysis in Section~\ref{sec:err}. 
Finally, in the appendix we give the proof of the two key lemmas from Section~\ref{sec:err}.

\section{Mathematical background}
\label{sec:background}
In this section we briefly summarise the relevant material
on variational EVPs, finite element methods and
quasi-Monte Carlo methods. For further details we refer the reader
to the references indicated throughout, or  \cite{GGKSS19}.

As a start, we make the following assumptions on the coefficients, which
will ensure that the problem \eqref{eq:evp} is well-posed and admits
fast convergence rates of our MLQMC algorithm.
In particular, we assume that
all coefficients are bounded from above and below, independently
of $\bsx$ and $\bsy$. 

\begin{assumption}
\hfill
\label{asm:coeff}
\begin{enumerate}
\item\label{itm:coeff} $a$ and $b$ are of the form 
\begin{align}
\label{eq:coeff}
a(\bsx, \bsy) = a_0(\bsx) + \sum_{j = 1}^\infty y_j a_j(\bsx)
\quad \text{and} \quad
b(\bsx, \bsy) = b_0(\bsx) + \sum_{j = 1}^\infty y_j b_j(\bsx),
\end{align}
where $a_j,\ b_j \in L^\infty(D)$, for all $j \ge 0$, and $c \in L^\infty(D)$
depend on $\bsx$ but not $\bsy$.
\item\label{itm:amin} There exists $\amin > 0$ such that $a(\bsx, \bsy) \geq \amin$,
$b(\bsx, \bsy) \geq 0$ and $c(\bsx) \geq \amin$, for all $\bsx \in D$, $\bsy \in \Omega$.
\item\label{itm:summable} There exist $p, q \in (0, 1)$ such that
\begin{align*}
\sum_{j = 1}^\infty \max\big(\nrm{a_j}{L^\infty},\, \nrm{b_j}{L^\infty} \big)^p < \infty
\quad \text{and} \quad
\sum_{j = 1}^\infty \nrm{\nabla a_j}{L^\infty(D)}^q \,<\, \infty.
\end{align*}
\end{enumerate}

For convenience, we define $\amax < \infty$ so that
\begin{equation}
\label{eq:coeff_bnd}
\max \big\{ \|a(\bsy)\|_{L^\infty}, \, \|\nabla a(\bsy) \|_{L^\infty}, \,
\|b(\bsy)\|_{L^\infty}, \|c\|_{L^\infty}\big\}
\,\leq\, \amax
\quad
\text{for all } \bsy \in \Omega.
\end{equation}

\end{assumption}

\subsection{Variational eigenvalue problems}
\label{sec:var-evp}
To introduce the variational form of the PDE \eqref{eq:evp}, we let 
$V \coloneqq H^1_0(D)$, the first order Sobolev space of functions with vanishing trace,
and equip $V$ with the norm $\nrm{v}{V} \coloneqq \nrm{\nabla v }{L^2(D)}$.
The space $V$ together with its dual, which we denote by $V^*$, satisfy the well-known
chain of compact embeddings $V \subset\subset L^2(D) \subset\subset V^*$,
where the pivot space $L^2(D)$ is identified with its own dual.

For $v, w \in V$, define the inner products 
$\calA(\bsy; \cdot, \cdot), \calM(\cdot, \cdot): V \times V \to \R$ by
\begin{align*}
\calA(\bsy; w, v) \,&\coloneqq\, 
\int_D a(\bsx, \bsy) \nabla w(\bsx) \cdot \nabla v(\bsx)\rd \bsx 
+ \int_D b(\bsx, \bsy) w(\bsx) v(\bsx) \rd \bsx,
\\
\calM(w, v) \,&\coloneqq\, \int_D c(\bsx) w(\bsx) v(\bsx) \rd \bsx,
\end{align*}
and let their respective induced norms be given by 
$\nrm{v}{\calA(\bsy)} \coloneqq \sqrt{\calA(\bsy; v, v)}$
and $\nrm{v}{\calM} \coloneqq \sqrt{\calM(v, v)}$.
Further, let $\calM(\cdot, \cdot)$ also denote the duality paring on $V \times V^*$.

In the usual way, multiplying \eqref{eq:evp} by $v \in V$ and performing integration by parts
with respect to $\bsx$, we arrive at the following variational EVP, which
is equivalent to \eqref{eq:evp}.
Find $\lambda(\bsy) \in \R$, $u(\bsy) \in V$ such that
\begin{align}
\label{eq:var-evp}
\calA(\bsy; u(\bsy), v) \,&=\, \lambda(\bsy) \calM(u(\bsy), v)
\quad \text{for all } v \in V,\\
\nonumber
\nrm{u(\bsy)}{\calM} \,&=\, 1.
\end{align}

The classical theory for symmetric EVPs (see, e.g., \cite{BO91})
ensures that the variational EVP \eqref{eq:var-evp} has 
countably many strictly positive eigenvalues,
which, counting multiplicities, we label in ascending order as
\[
0 \,<\, \lambda_1(\bsy) \,\leq\, \lambda_2(\bsy) \,\leq\, \cdots.
\]
The corresponding eigenfunctions,
\[
u_1(\bsy), \ u_2(\bsy),\ \ldots,
\]
can be chosen to form a basis of $V$ that is orthonormal with respect to
the inner product $\calM(\cdot, \cdot)$, and, by \eqref{eq:var-evp}, also orthogonal with respect to 
$\calA(\bsy; \cdot, \cdot)$.

\begin{proposition}\label{eq:prop}
The smallest eigenvalue is simple for all $\bsy \in \Omega$. 
Furthermore, there exists $\rho > 0$, independent of $\bsy$, such that
\begin{equation}
\label{eq:gap}
\lambda_2(\bsy) - \lambda_1(\bsy) \,\geq\, \rho 
\quad \text{for all } \bsy \in \Omega.
\end{equation}
\end{proposition}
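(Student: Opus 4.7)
The plan is to establish simplicity pointwise in $\bsy$ via a positivity argument for the first eigenfunction, and then upgrade the resulting pointwise spectral gap to a uniform one using continuity of $\lambda_1,\lambda_2$ on $\Omega$ together with the compactness of $\Omega$ in the product topology (Tychonoff).

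For simplicity at a fixed $\bsy \in \Omega$, observe that since $a(\bsy) \ge \amin$ and $b(\bsy), c \ge 0$, and $|\nabla|v|| = |\nabla v|$ almost everywhere, we have $\calA(\bsy; |v|, |v|) = \calA(\bsy; v, v)$ and $\calM(|v|,|v|) = \calM(v, v)$ for all $v \in V$. Hence whenever a first eigenfunction $u_1(\bsy)$ minimises the Rayleigh quotient, so does $|u_1(\bsy)|$. A standard decomposition $u = u^+ - u^-$ then forces both $u^+$ and $u^-$ to be non-trivial non-negative eigenfunctions whenever $u$ changes sign, which contradicts the strong maximum principle applied to $-\nabla\!\cdot(a(\bsy)\nabla w) + (b(\bsy)-\lambda_1(\bsy)c)w = 0$ after absorbing the indefinite zero-order term by a shift. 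Thus any first eigenfunction is strictly sign-definite on $D$, and two $\calM$-orthogonal first eigenfunctions cannot both be strictly of one sign since $c \ge \amin > 0$ would make their $\calM$-inner product strictly positive. This yields simplicity of $\lambda_1(\bsy)$ for every $\bsy \in \Omega$.

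For the uniform gap, the next step is to show that $\bsy \mapsto \lambda_k(\bsy)$ is continuous on $\Omega$ equipped with the product topology. Using $\calA(\bsy; v, v) \ge \amin \|\nabla v\|_{L^2}^2$ together with Poincar\'e's inequality, one obtains, for any $v \in V\setminus\{0\}$,
\begin{equation*}
  \left|\frac{\calA(\bsy';v,v)}{\calM(v,v)} - \frac{\calA(\bsy;v,v)}{\calM(v,v)}\right|
  \,\le\,
  \frac{\|a(\bsy')-a(\bsy)\|_{L^\infty} + \Cpoin^{2}\|b(\bsy')-b(\bsy)\|_{L^\infty}}{\amin}\,\cdot\,\frac{\calA(\bsy;v,v)}{\calM(v,v)},
\end{equation*}
and the min--max characterisation of eigenvalues then yields the multiplicative perturbation bound $|\lambda_k(\bsy') - \lambda_k(\bsy)| \le \eta(\bsy,\bsy')\,\lambda_k(\bsy)$. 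Assumption~\ref{asm:coeff} (with $p < 1$) implies $\sum_j \|a_j\|_{L^\infty}$ and $\sum_j \|b_j\|_{L^\infty}$ are finite, so a standard tail estimate shows that $\bsy \mapsto a(\cdot,\bsy)$ and $\bsy \mapsto b(\cdot,\bsy)$ are continuous from the product topology on $\Omega$ into $L^\infty(D)$; hence $\eta(\bsy,\bsy') \to 0$ as $\bsy' \to \bsy$ in $\Omega$ and each $\lambda_k$ is continuous.

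Finally, by Tychonoff's theorem $\Omega$ is compact, so the continuous map $\bsy \mapsto \lambda_2(\bsy) - \lambda_1(\bsy)$ attains its minimum $\rho$ on $\Omega$; by the pointwise simplicity shown above, $\rho > 0$, giving \eqref{eq:gap}. The main obstacle is the strong maximum principle step in the simplicity argument, which must be invoked carefully for divergence-form operators with bounded but possibly sign-changing zero-order coefficients; the other ingredients (min--max perturbation of eigenvalues and product-topology continuity of the coefficient maps) are standard once Assumption~\ref{asm:coeff} is in place.
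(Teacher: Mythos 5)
Your argument is correct and is in substance the route the paper takes: the simplicity part is the standard variational positivity/Harnack argument that underlies the Krein--Rutmann theorem cited in the paper, and the uniform gap is obtained exactly as in the cited \cite[Proposition~2.4]{GGKSS19}, namely continuity of $\bsy \mapsto \lambda_k(\bsy)$ (via the min--max perturbation bound, using $\sum_j \|a_j\|_{L^\infty}, \sum_j\|b_j\|_{L^\infty} < \infty$ from Assumption~A\ref{asm:coeff}) combined with compactness of $\Omega$ in the product topology --- the very mechanism the paper's conclusion notes would fail on an unbounded parameter domain. So you have essentially reproduced, in self-contained form, the proof the paper delegates to its references.
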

\begin{proof}
The Krein--Rutmann Theorem and \cite[Proposition~2.4]{GGKSS19}.
\end{proof}

Henceforth, we will let the smallest eigenvalue and its corresponding eigenfunction be simply denoted
by $\lambda = \lambda_1$ and $u = u_1$.

It is often useful to compare the eigenvalues $\lambda_k$ to the 
eigenvalues of the negative Laplacian on $D$, also with homogeneous Dirichlet boundary conditions and with respect to the standard $L^2$ inner product.
These are denoted by
\begin{equation}
\label{eq:Lap_eval}
0 \,<\, \chi_1 \,<\, \chi_2 \,\leq\, \chi_3 \,\leq\, \cdots,
\end{equation}
and will often simply be referred to as Laplacian eigenvalues
or eigenvalues of the Laplacian, without explicitly stating the domain or boundary conditions.

The following form of the Poincar\'e inequality will also be useful throughout this paper
\begin{align}\label{eq:poin}
\nrm{v}{L^2} \,\leq\, \evalueLap^{-1/2}\nrm{v}{V} \, ,
\quad \text{for } v \in V.
\end{align}
It follows by the min-max representation for the Laplacian eigenvalue $\chi_1$.

The upper and lower bounds on the coefficients \eqref{eq:coeff_bnd}, along with the
Poincar\'e inequality \eqref{eq:poin}, ensure that the $\calA(\bsy)$- and $\calM$-norms
are equivalent to the $V$- and $L^2$-norms, respectively, with
\begin{align}
\label{eq:A_equiv}
\sqrt{\amin} \nrm{v}{V} \,\leq\, &\nrm{v}{\calA(\bsy)} \, \leq\, 
\sqrt{\amax\bigg(1 + \frac{1}{\evalueLap}\bigg)}\nrm{v}{V},\\
\label{eq:M_equiv}
\sqrt{\amin} \nrm{v}{L^2} \,\leq\, &\nrm{v}{\calM} \,\leq\, \sqrt{\amax} \nrm{v}{L^2}.
\end{align}

Finally, as is to be expected, for our finite element error analysis we require 
second-order smoothness with respect to the spatial variables, 
which we characterise by the space $Z = H^2(D) \cap V$, equipped with the norm
\[
\nrm{v}{Z} \,\coloneqq\, \big( \nrm{v}{L^2}^2 + \nrm{\Delta v}{L^2}^2\big)^{1/2}.
\]
In particular, the eigenfunctions belong to $Z$, see  \cite[Proposition~2.1]{GGKSS19}.

\subsection{Stochastic dimension truncation}
The first type of approximation we make is to truncate the infinite dimensional
stochastic domain to finitely many dimensions, which, for a truncation dimension $s \in \N$, 
we do by simply setting $y_j = 0$ for all $j > s$.
The result is that the coefficients $a$ and $b$ now only depend on $s$ terms.
We define the following notation: $\bsy_s = (y_1, y_2, \ldots, y_s)$,
\[
a^s(\bsx, \bsy) \,\coloneqq\, a_0(\bsx) + \sum_{j = 1}^s y_j a_j(\bsx),
\quad
b^s(\bsx, \bsy) \,\coloneqq\, b_0(\bsx) + \sum_{j = 1}^s y_j b_j(\bsx),
\]
and
\[
\calA_s(\bsy; w, v) \,\coloneqq 
\int_D a^s(\bsx, \bsy) \nabla w(\bsx) \cdot \nabla v(\bsx) \rd \bsx
+ \int_D b^s(\bsx, \bsy)w(\bsx) v(\bsx) \rd \bsx.
\]
So that the truncated approximations, denoted by $(\lambda_s(\bsy), u_s(\bsy))$,
satisfy
\begin{equation}
\label{eq:trunc-evp}
\calA_s(\bsy; u_s(\bsy), v) \,=\, \lambda_s(\bsy) \calM(u_s(\bsy), v)
\quad \text{for all } v \in V.
\end{equation}

\subsection{Finite element methods for EVPs}
\label{sec:fem}
To begin with, we first describe the finite element (FE) spaces used to discretise
the EVP \eqref{eq:var-evp}.
Let $\{V_h\}_{h > 0}$ be a family of conforming FE spaces
of dimension $M_h$,
where each $V_h$ corresponds to a shape regular triangulation $\mathscr{T}_h$
of $D$ and the index parameter $h = \max\{\diam(\tau) : \tau \in \mathscr{T}_h\}$
is called the meshwidth.
Since we have only assumed that the domain 
$D$ is convex and $a \in W^{1, \infty}(D)$,
throughout this paper we only consider continuous, piecewise linear FE spaces.
However, under stricter conditions on the smoothness of the domain and the
coefficients, one could easily extend our algorithm to higher-order FE methods.
Furthermore, we assume that the number of FE degrees of freedom is of the order of $h^{-d}$,
so that  $M_h \eqsim h^{-d}$.
This condition is satisfied by quasi-uniform meshes and also allows for local refinement.

For $h > 0$, each $\bsy\in \Omega$ yields a FE (or discrete) EVP, 
which is formulated as: Find $\lambda_h(\bsy) \in \R$, $u_h(\bsy) \in V_h$ such that
\begin{align}
\label{eq:fe-evp}
\calA(\bsy; u_h(\bsy), v_h) \,&=\, \lambda_h(\bsy) \calM(u_h(\bsy), v_h)
\quad \text{for all } v_h \in V_h,\\
\nonumber
\nrm{u_h(\bsy)}{\calM} \,&=\, 1.
\end{align}

The discrete EVP \eqref{eq:fe-evp} has
$M_h$ eigenvalues
\[
0 \,<\, \lambda_{1, h}(\bsy) \,\leq\, \lambda_{2, h}(\bsy) \,\leq\, \cdots \,\leq\, \lambda_{M_h, h}(\bsy),
\]
and corresponding eigenfunctions
\[
u_{1, h}(\bsy),\ u_{2, h}(\bsy),\ \ldots,\ u_{M_h, h}(\bsy),
\]
which are known to converge to the first $M_h$
eigenvalues and eigenfunctions of \eqref{eq:var-evp} as $h \to 0$,
see, e.g., \cite{BO91} or \cite{GGKSS19} for the stochastic case.

From \cite[Theorem~2.6]{GGKSS19} we have the following bounds on the FE
error for the minimal eigenpair,
which we restate here because they will be used extensively in our error analysis in Section~\ref{sec:err}.

\begin{theorem}
\label{thm:fe_err}
Let $h > 0$ be sufficiently small and suppose that Assumption~A\ref{asm:coeff} holds.
Then, for all $\bsy \in \Omega$, $\lambda_h$ satisfies 
\begin{equation}
\label{eq:fe_lam}
|\lambda(\bsy) - \lambda_h(\bsy)| \,\leq\, C_{\lambda} h^2,
\end{equation}
the corresponding eigenfunction $u_h$ can be chosen such that
\begin{equation}
\label{eq:fe_u}
\nrm{u(\bsy) - u_h(\bsy)}{V} \,\leq\, C_{u} h,
\end{equation}
and for $\calG \in H^{-1 + t}(D)$ with $ t \in [0, 1]$
\begin{equation}
\label{eq:fe_G}
\big|\calG(u(\bsy)) - \calG(u_h(\bsy))\big| \,\leq\,C_{\calG} \,h^{1 + t},
\end{equation}
where $0 < C_{\lambda},\ C_{u},\ C_{\calG}$ are positive constants 
independent of $\bsy$ and $h$.
\end{theorem}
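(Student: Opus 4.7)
The plan is to follow the classical Babu\v{s}ka--Osborn theory for symmetric elliptic eigenvalue problems, taking care throughout that every constant can be bounded independently of $\bsy$ using the uniform coefficient bounds in \eqref{eq:coeff_bnd} and the uniform spectral gap from Proposition~\ref{eq:prop}. Concretely, I would first recast the variational EVP \eqref{eq:var-evp} as a spectral problem for the solution operator $T(\bsy) : L^2(D) \to V$ defined by $\calA(\bsy; T(\bsy)f, v) = \calM(f,v)$ for all $v \in V$, and likewise for its FE counterpart $T_h(\bsy) : L^2(D) \to V_h$. Both operators are compact, self-adjoint, and positive on $L^2(D)$ with respect to $\calM(\cdot,\cdot)$, and $(\lambda,u)$ solves \eqref{eq:var-evp} iff $(1/\lambda, u)$ is an eigenpair of $T(\bsy)$.

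Next I would establish uniform $H^2$-regularity: since $D$ is convex and, under Assumption~A\ref{asm:coeff}, $a(\cdot,\bsy) \in W^{1,\infty}(D)$ with bounds uniform in $\bsy$, standard elliptic regularity gives $\|T(\bsy) f\|_Z \leq C \|f\|_{L^2}$ with $C$ depending only on $\amin$, $\amax$, and $D$. Combined with C\'ea's lemma and the standard interpolation estimate on a shape-regular mesh, this yields the uniform operator bound
\begin{equation*}
\Norm[V]{(T(\bsy) - T_h(\bsy))f} \,\leq\, C\,h\,\Norm[L^2]{f}, \qquad f \in L^2(D),
\end{equation*}
for a constant $C$ independent of $\bsy$ and $h$. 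Because $u(\bsy) \in Z$ uniformly (applied to $f = \lambda(\bsy)\, c\, u(\bsy) \in L^2$), there exists $v_h \in V_h$ with $\Norm[V]{u - v_h} \leq C h$, and Babu\v{s}ka--Osborn, using the uniform gap $\rho > 0$ from \eqref{eq:gap}, converts this best-approximation bound into the eigenfunction estimate \eqref{eq:fe_u}.

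For the eigenvalue estimate \eqref{eq:fe_lam} I would use the symmetric identity
\begin{equation*}
\lambda_h(\bsy) - \lambda(\bsy) \,=\, \frac{\calA(\bsy; u - u_h, u - u_h) - \lambda(\bsy)\,\calM(u - u_h, u - u_h)}{\calM(u_h, u_h)},
\end{equation*}
which, together with the norm equivalences \eqref{eq:A_equiv}--\eqref{eq:M_equiv} and the $V$-bound from the previous step, produces the doubled convergence rate $h^2$. For the functional bound \eqref{eq:fe_G} with $\calG \in H^{-1+t}(D)$, $t \in [0,1]$, I would interpolate between the trivial $V$-norm estimate (giving $h$ for $t = 0$) and an Aubin--Nitsche dual argument: introduce the dual source problem with right-hand side given by $\calG$, exploit its $H^{1+t}$-regularity on the convex domain, and pair with $u - u_h$ to gain an extra factor $h^t$.

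The main obstacle is ensuring that all constants are genuinely $\bsy$-independent. This requires three uniformities: the uniform $H^2$-regularity (from the uniform upper and lower bounds on $a$, $b$ and $\|\nabla a\|_{L^\infty}$), the uniform simplicity and gap of $\lambda(\bsy)$ (Proposition~\ref{eq:prop}), and the uniform norm equivalences \eqref{eq:A_equiv}--\eqref{eq:M_equiv}. Once these are in place, the Babu\v{s}ka--Osborn and Aubin--Nitsche arguments go through verbatim for each fixed $\bsy$, and the constants may simply be maximised over $\Omega$ to yield $C_\lambda$, $C_u$ and $C_\calG$.
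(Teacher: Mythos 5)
There is no in-paper proof to compare against: Theorem~\ref{thm:fe_err} is simply restated from \cite[Theorem~2.6]{GGKSS19} (the single-level analysis), and the present paper only cites it. Your sketch is, in essence, a reconstruction of the argument behind that cited result: solution-operator formulation, uniform $H^2$-regularity from the $\bsy$-uniform bounds \eqref{eq:coeff_bnd}, Babu\v{s}ka--Osborn theory combined with the $\bsy$-independent spectral gap \eqref{eq:gap}, the quadratic identity for the eigenvalue error, and an Aubin--Nitsche duality argument for functionals, with all constants then maximised over $\Omega$; this plan is sound and is the standard route. Two points deserve more care than your outline gives them. First, the duality step for \eqref{eq:fe_G} is not the verbatim source-problem argument, because $u-u_h$ satisfies no Galerkin orthogonality: writing $\calG(u-u_h)=\calA(\bsy;u-u_h,v_\calG)$ and inserting an FE approximation of the dual solution $v_\calG$ leaves residual terms of the form $(\lambda-\lambda_h)\calM(u,\cdot)$ and $\lambda_h\calM(u-u_h,\cdot)$, so you also need the $L^2$-estimate $\nrm{u-u_h}{L^2}\lesssim h^2$ (itself a separate duality argument) before the $h^{1+t}$ rate for $t\in[0,1]$ comes out. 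Second, \eqref{eq:fe_u} only holds after fixing the sign (phase) and normalisation of $u_h$ relative to $u$ --- the wording ``can be chosen'' in the statement hides exactly this choice, which your operator-theoretic argument should make explicit. Neither issue invalidates your outline; both are handled in the cited reference.
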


We have already seen that the minimal eigenvalue of the continuous problem
\eqref{eq:var-evp} is simple for all $\bsy$, and that the spectral gap is
bounded independently of $\bsy$.
It turns out that
the spectral gap of the FE eigenproblem \eqref{eq:fe-evp}
is also bounded independently of $\bsy$ and $h$,
provided that the FE eigenvalues are sufficiently accurate.
Specifically, if
\begin{equation}
\label{eq:hbar}
h \,\leq\, \hsuff \coloneqq \sqrt{\frac{\rho}{2C_{\lambda}}},
\end{equation} 
then
\begin{equation}
\label{eq:gap_h}
\lambda_{2, h}(\bsy) - \lambda_{1, h}(\bsy)
\,\geq\, \lambda_2(\bsy) - \lambda_1(\bsy) - \big(\lambda_{1, h}(\bsy) - \lambda_1(\bsy)\big)
\,\geq\, \rho - C_{\lambda} h^2
\,\geq\, \frac{\rho}{2},
\end{equation}
where we have used the FE error estimate \eqref{eq:fe_lam} and that $\lambda_{1, h}(\bsy)$ converges from above.

In fact, it is well known that for conforming methods all of the FE eigenvalues converge 
from above, so that $\lambda_{k, h}(\bsy) \geq \lambda_k(\bsy)$.
Then, as in \cite{GGKSS19}, we can 
use the eigenvalues of the Laplacian (or rather their FE approximations)
to bound the FE  eigenvalues and eigenfunctions independently of $\bsy$. 
Hence, for $k = 1, 2, \ldots, M_h$ and for all $\bsy \in \Omega$,
there exist $\overline{\lambda_{k}}$ and $\overline{u_{k}}$, 
which are independent of both $\bsy$ and $h$, such that
\begin{align}
\label{eq:lam_bnd}
\underline{\lambda_k} \,\coloneqq\, \frac{\amin}{\amax}\chi_k 
\,\leq\, \lambda_k(\bsy) \,\leq\, \lambda_{k, h}(\bsy) \,&\leq\,
\frac{\amax}{\amin} (\chi_{k, h} + 1)\,\leq\,\overline{\lambda_{k}},\\
\label{eq:u_bnd}
\max\big\{\nrm{u_k(\bsy)}{V},\ \nrm{u_{k, h}(\bsy)}{V} \big\}
\,&\leq\, \frac{\sqrt{\amax(\chi_{k, h} + 1)}}{\amin}
\,\leq\, \overline{u_{k}},
\end{align}
where $\chi_{k, h}$ is the FE approximation of the $k$th Laplacian eigenvalue $\chi_k$.
In addition to converging from above, 
for the Laplacian eigenvalues it is known that 
$\chi_k \leq \chi_{k, h} \leq \chi_k + C_kh^2$, for some constant that is 
independent of $h$ (see \cite[Theorem~10.4]{Boffi10}). As such, for $h$
sufficiently small there exists an upper bound on $\chi_{k, h}$ that is independent of $h$,
which in turn allows us to choose the final upper bounds $\overline{\lambda_{k}}$
and $\overline{u_{k}}$ so that they are independent of both $\bsy$ and $h$.

To conclude this section we introduce some notation and 
some properties of $V_h$ that will be useful later on.
First, the spaces $V_h$ satisfy the \emph{best approximation property}:
\begin{equation}
\label{eq:best_app}
\inf_{v_h \in V_h} \nrm{w - v_h}{V} \,\lesssim\, h \nrm{w}{Z},
\quad \text{for all } w \in Z.
\end{equation}

Then, for $h > 0$, let $P_h(\bsy) : V \to V_h$ denote the $\calA(\bsy)$-orthogonal 
projection of $V$ onto $V_h$, which satisfies
\begin{equation}
\label{eq:P_h}
\calA(\bsy;w - P_h(\bsy)w, v_h) \,=\, 0,
\quad \text{for all } w \in V,\  v_h \in V_h,
\end{equation}
and hence  also
\begin{equation*}
\nrm{w - P_h(\bsy)w}{\calA(\bsy)} \,=\, \inf_{v_h \in V_h} \nrm{w - v_h}{\calA(\bsy)}.
\end{equation*}

\subsection{Quasi-Monte Carlo integration}
\label{sec:qmc}
Quasi-Monte Carlo (QMC) methods are a class of equal-weight quadrature
rules that can be used to efficiently approximate an integral  over the $s$-dimensional (translated) unit cube
\[
\calI_sf \,\coloneqq\, \int_{[-\frac{1}{2}, \frac{1}{2}]^s} f(\bsy) \rd \bsy.
\]
There are several different flavours of QMC rules, however in this paper
we focus on \emph{randomly shifted rank-1 lattice rules}.
In Section~\ref{sec:hoqmc} we will also briefly discuss how to extend
our method to \emph{higher-order interlaced polynomial lattice rules},
see \cite{Dick08,GodaDick15}.
For further details on different QMC methods see, e.g., \cite{DKS13}.

A randomly shifted rank-1 lattice rule approximation to $\calI_sf$ 
using $N$ points is
\begin{equation}
\label{eq:rqmc}
Q_{s, N}(\bsDelta)f \,\coloneqq\, \frac{1}{N} 
\sum_{k = 0}^{N - 1} f(\bst_k - \tfrac{\boldsymbol{1}}{\boldsymbol{2}}),
\end{equation}
where for a \emph{generating vector} $\bsz \in \N^s$ 
and  a uniformly distributed \emph{random shift} $\bsDelta \in [0, 1)^s$,
the points $\bst_k$ are given by
\[
\bst_k \,=\, 
\bst_k(\bsDelta) \,=\, \bigg\{\frac{k\bsz}{N} + \bsDelta\bigg\} \quad \text{for } k = 0, 1, \ldots, N - 1.
\]
Here $\{\cdot\}$ denotes taking the fractional part of each component of a vector
and $\tfrac{\boldsymbol{1}}{\boldsymbol{2}} \coloneqq (\tfrac{1}{2}, \tfrac{1}{2}, \ldots \tfrac{1}{2})$.

The standard spaces for analysing randomly shifted lattices rules are the
so-called \emph{weighted} Sobolev spaces that were introduced in \cite{SW98}.
Here the term ``weighted'' is used to indicate that the space depends on a collection of 
positive numbers called ``weights'' 
that model the importance of different subsets of variables and enter the space
through its norm.
To be more explicit, given a collection of  weights 
$\bsgamma \coloneqq \{\gamma_\setu > 0 : \setu \subseteq \{1, 2, \ldots, s\}\}$,
let $\Ws$ be the $s$-dimensional weighted Sobolev space of functions with 
square-integrable mixed first derivatives, equipped with the (unanchored) norm
\begin{equation}
\label{eq:W-norm}
\nrm{f}{\Ws}^2
\,=\, \sum_{\setu \subseteq \{1:s\}} \frac{1}{\gamma_\setu} \int_{[-\frac{1}{2}, \frac{1}{2}]^{|\setu|}} 
\bigg(\int_{[-\frac{1}{2}, \frac{1}{2}]^{s - |\setu|}} \pd{|\setu|}{}{\bsy_\setu} f(\bsy) \, \rd \bsy_{-\setu}\bigg)^2
\rd \bsy_\setu.
\end{equation}
Here $\bsy_\setu \coloneqq (y_j)_{j \in \setu}$ and 
$\bsy_{-\setu} \coloneqq (y_j)_{j \in \{1:s\}\setminus \setu}$. 
Note also that we have used 
here \emph{set} notation to denote the mixed first derivatives,
as this is the convention in the QMC literature.
However, when we later give results for 
higher-order mixed derivatives we will switch to \emph{multi-index} notation.

A generating vector that leads to a good randomly shifted lattice rule in practice can
be constructed using the \emph{component-by-component} (CBC) algorithm, or the more efficient
\emph{fast CBC} construction \cite{NC06,NC06np}.
In particular, it can be shown (see, e.g., \cite[Theorem 5.10]{DKS13})
that the root-mean-square (RMS) error of a randomly shifted lattice rule 
using a generating vector constructed by the CBC algorithm satisfies
\begin{align}
\label{eq:cbc_err}
&\sqrt{\bbE_\bsDelta \big[ |\calI_s f - Q_{s, N}f|^2\big]}
\nonumber\\
&\qquad\leq\,
\Bigg(\frac{1}{\varphi(N)} \sum_{\emptyset \neq \setu \subseteq \{1:s\}}
\gamma_\setu^\xi \left( \frac{2\zeta(2\xi)}{(2\pi^2)^\xi}\right)^{|\setu|}
\Bigg)^{1/2\xi} \nrm{f}{\Ws}
\quad \text{for all } \xi \in (\tfrac{1}{2}, 1]. 
\end{align}
Here $\varphi$ is the Euler totient function, $\zeta$ is the Riemann zeta function
and $\bbE_\bsDelta$ denotes the expectation with respect to the random shift $\bsDelta$.
For $N$ prime one has $\varphi(N) = N -1$ or for $N$ a power of 2 one has $\varphi(N) = N/2$, 
and so in both cases taking  $\xi$ close to $1/2$
in \eqref{eq:cbc_err} results in the RMS error converging close to $\calO(N^{-1})$.

In practice, it is beneficial to perform several independent QMC approximations
corresponding to a small number of independent random shifts,
and then take the final approximation to be the average over the different shifts.
In particular, let $\bsDelta^{(1)}, \bsDelta^{(2)}, \ldots,  \bsDelta^{(R)}$ be $R$ independent 
uniform random shifts, and let the average over the QMC approximations
with random shift $\bsDelta^{(r)}$ be denoted by
\[
\Qhat_{s, N, R}f \,\coloneqq\, \frac{1}{R} \sum_{r = 1}^R Q_{s, N}(\bsDelta^{(r)})f .
\]
Then, the sample variance,
\begin{equation}
\label{eq:sample_var}
\Vhat[\Qhat_{s, N, R}] \,\coloneqq\, \frac{1}{R(R - 1)} \sum_{r = 1}^R \big[\Qhat_{s, N, R} f - Q_{s, N}(\bsDelta^{(r)})f\big]^2,
\end{equation}
can be used as an estimate of the mean-square error of $\Qhat_{s, N, R}f$.

\section{MLQMC for random EVPs}
\label{sec:mlqmc-alg}
Applying a QMC rule to each term in the telescoping sum \eqref{eq:tele_sum}, 
using a different number $N_\ell$ of samples on each level, 
a simple MLQMC approximation of $\bbE_\bsy[\lambda]$ is given by
\begin{equation}
\label{eq:mlqmc0}
\Qml_L(\bsDelta) \lambda
\,\coloneqq\, 
\sum_{\ell = 0}^L Q_\ell(\bsDelta_\ell)\big(\lambda_\ell - \lambda_{\ell - 1}\big).
\end{equation}
Here, we define $Q_\ell(\bsDelta_\ell)  \coloneqq Q_{s_\ell, N_\ell}(\bsDelta_\ell)$ (see \eqref{eq:rqmc}) and we treat the $L + 1$ independent random shifts, $\bsDelta_\ell \in [0, 1)^{s_\ell}$, 
as a single vector of dimension $\sum_{\ell = 0}^L s_\ell$, denoted by
$\bsDelta = (\bsDelta_0, \bsDelta_1, \ldots, \bsDelta_L)$. 
Recall also that  $\lambda_\ell = \lambda_{h_\ell, s_\ell}$
for $\ell = 0, 1, \ldots, L$, and for simplicity denote $\lambda_{-1} = 0$.
By using a different random shift for each level,
the approximations across different levels will be statistically independent.
For a linear functional $\calG \in V^*$, the MLQMC approximation to
$\bbE_\bsy[\calG(u)]$ is defined in a similar fashion.

As for single level QMC rules, it is beneficial to use multiple random shifts,
so that we can estimate the variance on each level. Letting 
$\bsDelta^{(1)}, \bsDelta^{(2)}, \ldots, \bsDelta^{(R)}$ be $R$ independent random shifts 
of dimension $\sum_{\ell = 0}^L s_\ell$, the shift-averaged MLQMC approximation is
\begin{equation}
\label{eq:mlqmc_R}
\Qhatml_{L, R}\lambda \,\coloneqq\, \sum_{\ell = 0}^L \frac{1}{R} \sum_{r = 1}^R
Q_\ell(\bsDelta_\ell^{(r)})\big(\lambda_\ell - \lambda_{\ell - 1}\big).
\end{equation}

If in practice the parameters are not specified beforehand, then we set 
$h_\ell \eqsim 2^{-\ell}$, $s_\ell \eqsim 2^\ell$ and use the adaptive algorithm 
from \cite{GilesWater09} to choose the number of QMC points $N_\ell$.

The mean-square error (with respect to the random shift(s) $\bsDelta$) 
of the MLQMC estimator can be written as the sum of the bias and 
the total variance as follows
\begin{equation}
\label{eq:err_decomp}
\bbE_\bsDelta\big[|\bbE_\bsy[\lambda] - \widehat{Q}_L(\bsDelta) \lambda |^2\big]
=\, |\bbE_\bsy[\lambda - \lambda_L]|^2
+ \sum_{\ell = 0}^L \bbV_\bsDelta[Q_\ell(\lambda_\ell - \lambda_{\ell - 1})].
\end{equation}
In the equation above, we have simplified the first term (corresponding to the bias) 
by the telescoping property, and the variance on each level is defined by
\begin{align*}
\bbV_\bsDelta[Q_\ell(\lambda_\ell - \lambda_{\ell - 1})] \,\coloneqq\, 
\bbE_\bsDelta\big[|\bbE_\bsy[\lambda_\ell - \lambda_{\ell - 1}] - Q_\ell(\bsDelta_\ell) (\lambda_\ell - \lambda_{\ell - 1})\big|^2\big],
\end{align*}
where the cross-terms have vanished because randomly shifted QMC rules are unbiased.
By the linearity of $\calG \in V^*$, the error for the eigenfunction approximation can be decomposed
in the same way.

Assuming that the total bias and the variance on each level decay at some given rates,
then the decomposition of the mean-square error \eqref{eq:err_decomp} gives the 
following abstract complexity theorems
(one each, for the eigenvalue and for functionals of the eigenfunction).
As is usual with the analysis of multilevel algorithms, the difficult part is to 
verify the assumptions on the decay of the variance and to determine the corresponding 
parameters. This analysis will be performed in Section~\ref{sec:err}.

\begin{theorem}[Eigenvalues]
\label{thm:ML_abstract_lam}
Suppose that
$\bbE_\bsDelta [Q_\ell(\lambda_\ell - \lambda_{\ell - 1})] = \bbE_\bsy[\lambda_\ell - \lambda_{\ell - 1}]$, and that there exist positive constants 
$\alpha_\lambda, \alpha', \beta_\lambda, \beta', \eta$
such that
\begin{enumerate}
\item[M1.] 
$|\bbE_\bsy [\lambda - \lambda_L]| \lesssim h_L^{\alpha_\lambda} + s_L^{-\alpha'}$,
and
\item[M2.] $\bbV_\bsDelta [Q_\ell(\lambda_\ell - \lambda_{\ell - 1})] 
\lesssim R^{-1}N_\ell^{-\eta} \Big(h_{\ell - 1}^{\beta_\lambda} + 
s_{\ell - 1}^{-\beta'}\Big)$,  for all $\ell = 0, 1, 2, \ldots, L$.
\end{enumerate}
Then
\begin{equation*}
\bbE_\bsDelta \Big[\big|\bbE_\bsy[\lambda] - \Qhatml_{L, R}(\lambda)\big|^2\Big]
\,\lesssim\,
h_L^{\alpha_\lambda} + s_L^{\alpha'} + \frac{1}{R}\sum_{\ell = 0}^L \frac{1}{N_\ell^{\eta}}
\Big(h_{\ell - 1}^{\beta_\lambda} + s_{\ell - 1}^{-\beta'}\Big).
\end{equation*}
\end{theorem}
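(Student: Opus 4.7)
The proof is essentially bookkeeping that combines the standard multilevel bias–variance split with the two abstract hypotheses M1 and M2; all of the real analytical work sits in verifying these hypotheses (which is the subject of later sections). Here is how I would structure the argument.

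First I would decompose the mean-square error of the shift-averaged estimator $\Qhatml_{L,R}\lambda$ into a squared bias and a variance term, mimicking the decomposition in \eqref{eq:err_decomp}:
\begin{equation*}
\bbE_\bsDelta\Big[\big|\bbE_\bsy[\lambda] - \Qhatml_{L,R}\lambda\big|^2\Big]
= \big|\bbE_\bsy[\lambda] - \bbE_\bsDelta[\Qhatml_{L,R}\lambda]\big|^2
+ \bbV_\bsDelta[\Qhatml_{L,R}\lambda].
\end{equation*}
By the unbiasedness assumption $\bbE_\bsDelta[Q_\ell(\lambda_\ell - \lambda_{\ell-1})] = \bbE_\bsy[\lambda_\ell - \lambda_{\ell-1}]$ and linearity of expectation (applied to both the inner average over the $R$ shifts and the outer telescoping sum, with $\lambda_{-1}=0$), the expectation of $\Qhatml_{L,R}\lambda$ collapses to $\bbE_\bsy[\lambda_L]$. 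Hence the first term reduces to $|\bbE_\bsy[\lambda - \lambda_L]|^2$, which by M1 and the elementary inequality $(a+b)^2 \leq 2a^2 + 2b^2$ is bounded by a constant multiple of $h_L^{2\alpha_\lambda} + s_L^{-2\alpha'}$, and in particular by $h_L^{\alpha_\lambda} + s_L^{-\alpha'}$ times a bounded quantity, matching the stated form.

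Next I would handle the variance. The key structural fact is that the random shifts are independent \emph{across both levels and replicates}: $\{\bsDelta_\ell^{(r)}\}_{0\le\ell\le L,\,1\le r\le R}$ is a collection of independent uniform random vectors. Consequently the variance of the double sum in \eqref{eq:mlqmc_R} splits:
\begin{equation*}
\bbV_\bsDelta[\Qhatml_{L,R}\lambda]
= \sum_{\ell=0}^L \bbV_\bsDelta\!\left[\frac{1}{R}\sum_{r=1}^R Q_\ell(\bsDelta_\ell^{(r)})(\lambda_\ell - \lambda_{\ell-1})\right].
\end{equation*}
Since the $R$ inner terms on level $\ell$ are i.i.d.\ (each an unbiased estimator of $\bbE_\bsy[\lambda_\ell - \lambda_{\ell-1}]$), the per-level variance is $\bbV_\bsDelta[Q_\ell(\lambda_\ell - \lambda_{\ell-1})]$, which is directly controlled by M2. (If one prefers to read M2 as already incorporating the $R^{-1}$ factor, then no extra rescaling is needed; either way the final bound is the same.) Substituting gives
\begin{equation*}
\bbV_\bsDelta[\Qhatml_{L,R}\lambda]
\lesssim \frac{1}{R} \sum_{\ell=0}^L \frac{1}{N_\ell^{\eta}}
\Big(h_{\ell - 1}^{\beta_\lambda} + s_{\ell - 1}^{-\beta'}\Big).
\end{equation*}

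Adding the bias and variance bounds yields the stated inequality. There is no genuine obstacle in this argument; the only thing to watch is the careful accounting of independence (both across levels and across replicates), which justifies dropping cross terms in the variance, together with the telescoping identity that collapses $\bbE_\bsDelta[\Qhatml_{L,R}\lambda]$ to $\bbE_\bsy[\lambda_L]$. The substantive content of the theorem lies entirely in Assumptions M1 and M2, whose verification for the eigenvalue problem is deferred to Section~\ref{sec:err}.
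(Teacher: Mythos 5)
Your proof is correct and follows essentially the same route as the paper, which does not spell out a separate argument but obtains the theorem directly from the bias--variance decomposition \eqref{eq:err_decomp} together with the unbiasedness of the randomly shifted rules, the telescoping identity, and the independence of the shifts across levels and replicates. Your handling of the $R^{-1}$ factor (noting that M2 is stated so as to already absorb the shift-averaging) and of the squared bias being dominated by $h_L^{\alpha_\lambda}+s_L^{-\alpha'}$ matches the intended reading.
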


\begin{theorem}[Functionals]
\label{thm:ML_abstract_G}
For $\calG \in V^*$, suppose
$\bbE_\bsDelta[\calG(u_\ell - u_{\ell - 1})] = \bbE_\bsy[\calG(u_\ell - u_{\ell - 1})]$,
and that there exist positive constants 
$ \alpha_\calG, \alpha', \beta_\calG,\beta', \eta$
such that
\begin{enumerate}
\item[M1.] $|\bbE_\bsy [\calG(u - u_L)]| \lesssim h_L^{\alpha_\calG} + s_L^{-\alpha'}$,
and
\item[M2.]
$\bbV_\bsDelta [Q_\ell(\calG(u_\ell - u_{\ell - 1}))] 
\lesssim R^{-1}N_\ell^{-\eta} \Big(h_{\ell - 1}^{\beta_\calG} + 
s_{\ell - 1}^{-\beta'}\Big)$, for all $\ell = 0, 1, 2, \ldots, L$.
\end{enumerate}
Then
\begin{equation*}
\bbE_\bsDelta \Big[\big|\bbE_\bsy[\calG(u)] - \Qhatml_{L, R}(\calG(u))\big|^2\Big] 
\,\lesssim\,
h_L^{\alpha_\calG} + s_L^{\alpha'} + \frac{1}{R}\sum_{\ell = 0}^L \frac{1}{N_\ell^{\eta}}
\Big(h_{\ell - 1}^{\beta_\calG} + s_{\ell - 1}^{-\beta'}\Big).
\end{equation*}
\end{theorem}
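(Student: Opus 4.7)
The proof will be a direct analogue of the decomposition argument sketched after \eqref{eq:err_decomp} for the eigenvalue case, adapted to the functional setting and the shift-averaged estimator $\Qhatml_{L,R}$. The plan is to split the mean-square error into a squared bias piece (which is deterministic with respect to $\bsDelta$) and a variance piece (which is a sum of statistically independent contributions), then apply M1 and M2 term by term.

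First I would write
\begin{align*}
\bbE_\bsy[\calG(u)] - \Qhatml_{L,R}(\calG(u))
&= \Big(\bbE_\bsy[\calG(u)] - \bbE_\bsy[\calG(u_L)]\Big)
+ \Big(\bbE_\bsy[\calG(u_L)] - \Qhatml_{L,R}(\calG(u))\Big),
\end{align*}
use linearity of $\calG$ together with the telescoping identity
$\calG(u_L) = \sum_{\ell=0}^L \calG(u_\ell - u_{\ell-1})$ (with $u_{-1}\coloneqq 0$),
and exploit the unbiasedness hypothesis $\bbE_\bsDelta[\calG(u_\ell - u_{\ell-1})]=\bbE_\bsy[\calG(u_\ell - u_{\ell-1})]$ to see that the cross term between bias and the shift-driven fluctuation vanishes when I take $\bbE_\bsDelta$ of the square. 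This yields the analogue of \eqref{eq:err_decomp},
\[
\bbE_\bsDelta\Big[\big|\bbE_\bsy[\calG(u)] - \Qhatml_{L,R}(\calG(u))\big|^2\Big]
= \big|\bbE_\bsy[\calG(u - u_L)]\big|^2
+ \sum_{\ell=0}^L \bbV_\bsDelta\big[Q_\ell(\calG(u_\ell - u_{\ell-1}))\big],
\]
where the variances genuinely add because the shifts $\bsDelta_0,\ldots,\bsDelta_L$ (and all their $R$ copies) are mutually independent across levels.

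For the bias I would invoke M1 directly and use $(a+b)^2 \le 2a^2 + 2b^2$ to absorb the square into the displayed upper bound (treating the resulting constant as part of the $\lesssim$ notation, and, following the convention of the statement, folding the factor $2$ into the exponents $\alpha_\calG,\alpha'$). For the variance I would apply M2 to each level separately; the factor $R^{-1}$ is already embedded in M2, reflecting that $\Qhatml_{L,R}$ is the average of $R$ independent shifted QMC rules at each level. Summing the level contributions then gives exactly the asserted bound.

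There is no real obstacle at this stage of the paper: the theorem is an abstract consequence of the error decomposition together with the hypothesised bias and variance estimates, so the only care needed is to verify the independence of the level increments (to get additive variances) and the absence of a cross term (which uses unbiasedness of randomly shifted lattice rules). The hard work, which is to actually \emph{verify} assumptions M1 and M2 with explicit rates $\alpha_\calG,\alpha',\beta_\calG,\beta',\eta$, is deferred to Section~\ref{sec:err} and rests on the mixed regularity bounds of Theorem~\ref{thm:regularity_y}, the FE error estimates of Theorem~\ref{thm:fe_err}, and the CBC bound \eqref{eq:cbc_err}.
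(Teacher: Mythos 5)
Your proposal is correct and follows essentially the same route as the paper, which obtains the result directly from the error decomposition \eqref{eq:err_decomp} (unbiasedness killing the cross terms, independence of the level shifts making the variances additive, linearity of $\calG$) and then inserts M1 and M2 level by level. The only cosmetic point is the squared bias: since $h_L\lesssim 1$ and $s_L\geq 1$, the bound $h_L^{2\alpha_\calG}+s_L^{-2\alpha'}\lesssim h_L^{\alpha_\calG}+s_L^{-\alpha'}$ gives the stated (weaker) form without any need to adjust exponents, exactly as in the eigenvalue case.
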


\begin{remark} In the case of a single truncation dimension, $s_\ell = s_L$ for all
$\ell = 1, 2, \ldots, L$, the terms $s_{\ell - 1}^{-\beta'}$ can be dropped from 
the theorems above.
\end{remark}

In Section~\ref{sec:err}, we verify that if Assumption~A\ref{asm:coeff} on the coefficients holds,
then Assumptions~M1 and M2 above are satisfied, and we give explicit values of the rates.

To better illustrate the power of our MLQMC algorithm, we
give here the following complexity bound for the special case of geometrically 
decaying meshwidths and a fixed truncation dimension.
We only give the eigenvalue result, but an analogous result holds also for 
linear functionals $\calG \in L^2(D)$. 
For less smooth functionals, $\calG \in H^{-1 + t}(D)$ for $t \in [0, 1]$,
similar results hold but with slightly adjusted rates.

\begin{corollary}
\label{cor:complexity}
Let $0 < \varepsilon \leq e^{-1}$
and suppose that Assumption~A\ref{asm:coeff} holds with $p, q \leq 2/3$. 
Also, let $h_\ell \eqsim 2^{-\ell}$ with $h_0$ sufficiently small and let
$s_\ell = s_L \eqsim h_L^{2p/(2 - p)}$. Finally, suppose that each $Q_\ell$ is an $N_\ell$-point 
lattice rule corresponding to a CBC-constructed generating vector.
If there exists $0 < \gamma < d + 1$ such that the cost on each level $\ell \in \N$ satisfies
\begin{enumerate}
\item[M3.] $\cost\big(Q_\ell(\lambda_\ell - \lambda_{\ell - 1})\big) \lesssim 
R N_\ell \big(s_\ell h_\ell^{-d} + h_\ell^{-\gamma}\big)$,
\end{enumerate}
then, $L$ and $N_\ell = 2^{n_\ell}$, for $n_\ell \in \N$, can be chosen such that
\begin{align*}
\bbE_\bsDelta \Big[\big|\bbE_\bsy[\lambda] - \Qhatml_{L, R}(\lambda)\big|^2\Big]
\,&\lesssim\, \varepsilon^2
\end{align*}
and for $\delta > 0$
\begin{equation*}
\cost\big(\Qhatml_{L, R}(\lambda)\big) \,\lesssim\,
\begin{cases}
\varepsilon^{-1 - p/(2 - p) - \delta} 
& \text{if } d = 1,\\
\varepsilon^{-1 - p/(2 - p) - \delta} \log_2(\varepsilon^{-1})^{3/2 + \delta}
&\text{if } d = 2,\\
\varepsilon^{-d/2 - p/(2 - p)} 
& \text{if } d > 2.
\end{cases}
\end{equation*}
\end{corollary}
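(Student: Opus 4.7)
The plan is to invoke Theorem~\ref{thm:ML_abstract_lam} with explicit rates supplied by the forthcoming analysis in Section~\ref{sec:err}, and then perform the standard multilevel complexity optimisation. Under Assumption~A\ref{asm:coeff} with $p, q \leq 2/3$, the results of Section~\ref{sec:err} will give M1 with $\alpha_\lambda = 2$ (the finite element rate from Theorem~\ref{thm:fe_err}) and an exponent $\alpha'$ such that the prescribed truncation dimension $s_L$ makes $s_L^{-\alpha'}\lesssim h_L^2$, and M2 with $\beta_\lambda = 4$ (from the squared weighted-Sobolev norm of $\lambda_\ell - \lambda_{\ell-1}$) and $\eta = 2(1-\delta')$ for any arbitrarily small $\delta'>0$, the latter coming from the CBC lattice-rule bound \eqref{eq:cbc_err} under the $p \leq 2/3$ assumption. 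Since $s_\ell = s_L$ is independent of $\ell$, the $s_{\ell-1}^{-\beta'}$ contribution in M2 disappears by the Remark following Theorem~\ref{thm:ML_abstract_lam}.

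First I fix $L = \lceil \tfrac{1}{2}\log_2(\varepsilon^{-1}) \rceil$ so that $h_L \eqsim \varepsilon^{1/2}$. Then $h_L^{\alpha_\lambda} = h_L^2 \lesssim \varepsilon$, and the prescribed scaling of $s_L$ gives $s_L^{-\alpha'} \lesssim \varepsilon$, so that the squared bias in M1 is $\lesssim \varepsilon^2$. Next, I choose $N_\ell$ by a Lagrangian argument that minimises the total cost $\sum_\ell R N_\ell(s_L h_\ell^{-d} + h_\ell^{-\gamma})$ from M3 subject to the variance constraint $\sum_\ell R^{-1} N_\ell^{-\eta} h_{\ell-1}^{\beta_\lambda} \leq \varepsilon^2/2$ from M2. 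The optimum occurs at
\begin{equation*}
N_\ell \,\propto\, \bigg(\frac{h_{\ell-1}^{\beta_\lambda}}{s_L h_\ell^{-d} + h_\ell^{-\gamma}}\bigg)^{1/(\eta+1)},
\end{equation*}
which I round up to the nearest power of $2$ so that $\varphi(N_\ell) = N_\ell/2$ in \eqref{eq:cbc_err}. Substituting back yields
\begin{equation*}
\cost\bigl(\Qhatml_{L,R}(\lambda)\bigr) \,\lesssim\, \varepsilon^{-2/\eta} \Bigg(\sum_{\ell=0}^L h_{\ell-1}^{\beta_\lambda/(\eta+1)}\bigl(s_L h_\ell^{-d} + h_\ell^{-\gamma}\bigr)^{\eta/(\eta+1)}\Bigg)^{(\eta+1)/\eta}.
\end{equation*}

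Finally, I evaluate the inner sum $S$ in the three dimensional regimes. Using $h_{\ell-1}\eqsim h_\ell = h_0 2^{-\ell}$ and splitting off the two cost pieces, the dominant contribution from the $s_L h_\ell^{-d}$ term has general summand $\eqsim s_L^{\eta/(\eta+1)} h_\ell^{(\beta_\lambda-d\eta)/(\eta+1)}$ with exponent $(4-2d)/3$ when $\beta_\lambda = 4$ and $\eta\approx 2$: positive for $d=1$ so that $S\lesssim s_L^{2/3}$ (geometric sum, bounded by its first term); zero for $d=2$ so that $S\lesssim s_L^{2/3} \log_2(\varepsilon^{-1})$; and negative for $d\geq 3$, so the sum is dominated by $\ell = L$ and $S\lesssim s_L^{2/3} h_L^{(4-2d)/3}\eqsim s_L^{2/3}\varepsilon^{(2-d)/3}$. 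The $h_\ell^{-\gamma}$ piece is handled identically with $\gamma$ replacing $d$ and, because $\gamma < d+1$ and $s_L \eqsim \varepsilon^{-p/(2-p)}$ with $p\leq 2/3$, is always dominated by the $s_L h_\ell^{-d}$ piece. Raising $S$ to the power $(\eta+1)/\eta$ and taking $\eta$ arbitrarily close to $2$ (which introduces the small $\delta>0$ in the exponents for $d=1,2$) yields the three complexity bounds as stated.

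The main obstacle is the careful case analysis of the sum $S$ with the two-term cost: one must split the sum into the $s_L h_\ell^{-d}$ and $h_\ell^{-\gamma}$ contributions, track the (possible) crossover between them across levels, and confirm that even in the borderline regime where $\gamma$ approaches $d+1$ the $s_L h_\ell^{-d}$ part still dominates, which ultimately relies on the hypothesis $p\leq 2/3$. A secondary but routine technicality is verifying that rounding each $N_\ell$ up to a power of $2$ preserves the asymptotic rate, which follows from $\varphi(2^k) = 2^{k-1}$ in \eqref{eq:cbc_err}.
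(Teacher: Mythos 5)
Your overall route is the same as the paper's: verify M1--M2 with $\alpha_\lambda=2$, $\alpha'=2/p-1$, $\beta_\lambda=4$, $\eta=2-\delta$, and then run the standard multilevel minimisation (the paper simply delegates this to \cite[Cor.~2]{KSSSU17}, which you write out: fix $h_L\eqsim\varepsilon^{1/2}$ to balance the bias, take the Lagrange-optimal $N_\ell\propto (V_\ell/C_\ell)^{1/(\eta+1)}$ rounded up to powers of two, and evaluate $S=\sum_\ell V_\ell^{1/(\eta+1)}C_\ell^{\eta/(\eta+1)}$ in the three regimes). The bookkeeping for the $s_L h_\ell^{-d}$ part of the cost is correct, including the fact that the $\delta$-loss cancels for $d>2$; one small slip is that $\eta=2-\delta$ comes from the assumption $q\le 2/3$ (summability of $\bsbetahat$), not from $p\le 2/3$.

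The one step that is genuinely not justified is your disposal of the $h_\ell^{-\gamma}$ cost component. You claim it is ``always dominated by the $s_L h_\ell^{-d}$ piece'' because $\gamma<d+1$ and $p\le 2/3$. Domination at level $\ell$ means $h_\ell^{-(\gamma-d)}\lesssim s_L$, which is tightest at $\ell=L$, where it reads $\varepsilon^{-(\gamma-d)/2}\lesssim\varepsilon^{-p/(2-p)}$, i.e.\ $\gamma\le d+2p/(2-p)$. The hypothesis $p\le 2/3$ gives $2p/(2-p)\le 1$, i.e.\ it caps $s_L$ at $\varepsilon^{-1/2}$, which works in the \emph{opposite} direction to what you need, and $\gamma<d+1$ does not imply $\gamma\le d+2p/(2-p)$ when $p$ is small. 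For $d=1$ the term is harmless in any case, since $\gamma<2$ makes $\sum_\ell h_{\ell-1}^{4/(\eta+1)}h_\ell^{-\gamma\eta/(\eta+1)}$ a convergent geometric sum and its cost contribution $O(\varepsilon^{-2/\eta})$. For $d\ge 2$ and $\gamma>4/\eta\approx 2$, however, the $h_\ell^{-\gamma}$ part of $S$ is dominated by $\ell=L$ and contributes an additional $\varepsilon^{-\gamma/2}$ to the cost --- unsurprisingly, since that is already the cost of a single sample on the finest level, so it cannot be argued away by a different choice of $N_\ell$. This extra term is absorbed into the stated bounds exactly when $\gamma\le d+2p/(2-p)$, which covers the practically relevant regime $\gamma\approx d$ highlighted in the remark following the corollary, but not the full range $\gamma<d+1$ with $p$ small. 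You should therefore either impose (or observe that one effectively needs) $\gamma\le d+2p/(2-p)$, or carry the additive $\varepsilon^{-\gamma/2}$ term through the case analysis rather than asserting domination; as written, that assertion is the gap in your argument.
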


\begin{proof}
In Section~\ref{sec:err} (cf., \eqref{eq:bias_lam} and Theorem~\ref{thm:Var_ell})
we verify that Assumptions M1, M2 from Theorem~\ref{thm:ML_abstract_lam} hold with
$\alpha_\lambda = 2$, $\alpha' = 2/p - 1$, $\beta_\lambda = 2\alpha_\lambda = 4$
and $\eta = 2 - \delta$.
The remainder of the proof follows by a standard minimisation argument 
as in, e.g., \cite[Cor.~2]{KSSSU17}.
\end{proof}

\begin{remark}
In \cite{GS21b} we verify that the cost does indeed satisfy Assumption M3 with 
$\gamma \approx d$, which is the same order cost as the source problem.
\end{remark}

\section{Stochastic regularity}
\label{sec:reg}
In order for a randomly shifted lattice rule approximation to achieve the error bound
\eqref{eq:cbc_err}, we require that the integrand belongs to $\Ws$, 
which in turn requires bounds on the mixed first derivatives.
For the eigenproblem \eqref{eq:evp}, this means that we need
to study the regularity of eigenvalues (and eigenfunctions)
with respect to the stochastic parameter $\bsy$.
In order to bound the variance on each level of our MLQMC estimator,
it is necessary to also study the FE error
in $\Ws$ (cf. \eqref{eq:err_ell_decomp}), whereas the single level analysis 
in \cite{GGKSS19} only required the expected FE error.
This analysis of the FE error in a stronger norm requires mixed regularity of the 
solution with respect to both $\bsx$ and $\bsy$ simultaneously, 
which has not been shown previously.
The theorem below presents the required bounds for $u$,
along with the bounds from \cite{GGKSS19} with 
respect to $\bsy$ only, which are included here for completeness.
Analyticity of simple eigenvalues and eigenfunctions with respect to $\bsy$ was 
shown in \cite{AS12}, however, explicit bounds on the derivatives were not given there 
and they also did not consider the mixed $\bsx$ and $\bsy$ regularity required for 
the ML analysis.

Although the analysis of randomly shifted lattice rules requires only the mixed \emph{first}
derivatives (cf., \eqref{eq:W-norm}), we also give results for arbitrary higher-order mixed
derivatives. We do this because the proof technique is the same, and also since these bounds may 
be useful for the analysis of higher-order methods, e.g., higher-order QMC 
(see Section~\ref{sec:hoqmc}) or sparse grid rules (see, e.g., \cite{GrieHarMul20,ZechDungSchw19}).
As such, to simplify notation we will write mixed higher-order derivatives using 
multi-index notation instead of the set notation used in Section~\ref{sec:qmc}. 
For a multi-index $\bsnu = (\nu_j)_{j \in \N}$ with $\nu_j \in \N \cup \{0 \}$
and only finitely-many nonzero components,
let $\pdy^\bsnu$ denote the mixed partial differential operator where the order of derivative
with respect to the variable $y_j$ is $\nu_j$.
Define $|\bsnu| \coloneqq \sum_{j \geq 1} \nu_j$ and denote the
set of all admissible multi-indices by $\indx \coloneqq \{ \bsnu \in \N^\N: |\bsnu| < \infty\}$.
All operations and relations between multi-indices will be performed componentwise,
e.g., for $\bsnu, \bsm \in \calF$ addition is given by 
$\bsnu + \bsm = (\nu_j + m_j)_{j\in \N}$, and
$\bsnu \leq \bsm$ if and only if $\nu_j \leq m_j$ for all $j \in \N$.
Similarly, 
for $\bsnu, \bsm \in \calF$ and a sequence $\bsbeta \in \ell^\infty$
define the following shorthand for products
\[
\binom{\bsnu}{\bsm} \,\coloneqq\, \prod_{j = 1}^\infty \binom{\nu_j}{m_j}
\quad \text{and} \quad
\bsbeta^\bsnu \,\coloneqq\, \prod_{j = 1}^\infty \beta_j^{\nu_j}.
\]
Note that since $\bsnu, \bsm \in \calF$ have finite support
these products have finitely-many terms.

\begin{theorem}\label{thm:regularity_y}
Let $\bsnu \in \indx$ be a multi-index, let $\upepsilon \in (0, 1)$, and suppose that 
Assumption~A\ref{asm:coeff} holds.
Also, define the sequences $\bsbeta = (\beta_j)_{j \in \N}$ and 
$\bsbetabar = (\betabar_j)_{j \in \N}$ by
\begin{align}
\label{eq:beta}
\beta_j \,&\coloneqq\, C_\bsbeta \max\big(\nrm{a_j}{L^\infty}, \nrm{b_j}{L^\infty}\big),\\
\label{eq:betabar}
\betabar_j \,&\coloneqq\, C_\bsbeta 
\max\big(\nrm{a_j}{L^\infty}, \nrm{b_j}{L^\infty}, \nrm{\grad a_j}{L^\infty}\big),
\end{align}
where $C_\bsbeta \geq 1$,
given explicitly below in \eqref{eq:C_beta},
is independent of $\bsy$ but depends on $\upepsilon$.

Then, for all $\bsy \in \Omega$, the derivative of the minimal eigenvalue with respect to 
$\bsy$ is bounded by
\begin{equation}
\label{eq:dlambda}
|\pdy^\bsnu \lambda(\bsy)| \,\leq\, 
\overline{\lambda} \, |\bsnu|!^{1 + \upepsilon} \, \bsbeta^\bsnu,
\end{equation}
and the derivative of the corresponding eigenfunction satisfies both
\begin{align}
\label{eq:du_V}
\nrm{\pdy^\bsnu u(\bsy)}{V} 
\,&\leq\, \overline{u} \, |\bsnu|!^{1 + \upepsilon} \,\bsbeta^\bsnu,\\
\label{eq:du_Lap}
\nrm{\pdy^\bsnu u(\bsy)}{Z} \,&\leq\, C\,|\bsnu|!^{1 + \upepsilon} \, \bsbetabar^\bsnu,
\end{align}
where $\overline{\lambda}$, $\overline{u}$ are as in \eqref{eq:lam_bnd}, \eqref{eq:u_bnd},
respectively, and $C$ in \eqref{eq:du_Lap} is independent of 
$\bsy$ but depends on $\upepsilon$.

Moreover, for $h > 0$ sufficiently small, the bounds \eqref{eq:dlambda} and \eqref{eq:du_V}
are also satisfied by $\lambda_h(\bsy)$ and $u_h(\bsy)$,
respectively.

\end{theorem}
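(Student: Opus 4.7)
The plan is to prove the three bounds \eqref{eq:dlambda}--\eqref{eq:du_Lap} simultaneously by induction on $|\bsnu|$, with the base case $\bsnu = \bszero$ supplied by \eqref{eq:lam_bnd}--\eqref{eq:u_bnd} together with standard $H^2$-regularity of $u$ on the convex domain $D$. The inductive step is driven by a differentiated weak form. Applying $\pdy^\bsnu$ to $\calA(\bsy; u, v) = \lambda \calM(u, v)$ via the Leibniz rule, and crucially exploiting the \emph{affine dependence} of $a$ and $b$ on $\bsy$ (so that $\pdy^\bsm a = 0$ whenever $|\bsm|\geq 2$), only the terms with $\bsm = \bszero$ or $\bsm = \bse_j$ survive from the derivatives of $\calA(\bsy;\cdot,\cdot)$, yielding the recursive identity
\begin{align*}
\calA(\bsy; \pdy^\bsnu u, v) - \lambda \calM(\pdy^\bsnu u, v)
&= -\sum_{j : \nu_j \geq 1} \nu_j \, \calA^{(\bse_j)}(\bsy; \pdy^{\bsnu-\bse_j} u, v) \\
&\quad + \sum_{\bszero \neq \bsm \leq \bsnu} \binom{\bsnu}{\bsm} (\pdy^\bsm \lambda)\, \calM(\pdy^{\bsnu-\bsm} u, v),
\end{align*}
where $\calA^{(\bse_j)}(\bsy; w, v) \coloneqq \int_D a_j \nabla w \cdot \nabla v \rd \bsx + \int_D b_j w v \rd \bsx$.

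To obtain \eqref{eq:dlambda} I would test with $v = u$: by symmetry $\calA(\bsy; \pdy^\bsnu u, u) = \lambda \calM(\pdy^\bsnu u, u)$, so the left-hand side vanishes and the $\bsm=\bsnu$ term isolates $\pdy^\bsnu \lambda$ on the right, giving an explicit recursion in strictly lower-order data. For \eqref{eq:du_V}, I decompose $\pdy^\bsnu u = \alpha^\bsnu u + w^\bsnu$ with $w^\bsnu \in u^{\perp_\calM}$: the scalar $\alpha^\bsnu$ is recovered by differentiating $\calM(u,u)=1$ and equals a convolution over lower-order eigenfunction derivatives, while $w^\bsnu$ satisfies a variational problem in which the operator $\calA(\bsy;\cdot,\cdot) - \lambda \calM(\cdot,\cdot)$ is coercive on $u^{\perp_\calM}$ with inf--sup constant of order $\rho/\overline{\lambda}$, by the uniform spectral gap \eqref{eq:gap}. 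Inverting this operator yields a $V$-bound on $w^\bsnu$, and hence on $\pdy^\bsnu u$, entirely in terms of lower-order data weighted by the $\beta_j$. The inductive step then generates convolution sums of the form $\sum_{\bsm \leq \bsnu} \binom{\bsnu}{\bsm} |\bsm|!^{1+\upepsilon} |\bsnu-\bsm|!^{1+\upepsilon}$, which must be absorbed into $C(\upepsilon)\, |\bsnu|!^{1+\upepsilon}$; the slack $\upepsilon>0$ is exactly what makes the normalised sum $\sum_{\bsm \leq \bsnu} \binom{\bsnu}{\bsm}^{-\upepsilon}$ geometric rather than logarithmic, and taking the constant $C_\bsbeta$ in \eqref{eq:beta} large enough (depending on $\upepsilon$, $\overline{\lambda}$, $\overline{u}$, $\amin$, $\rho$) then closes the induction for the $V$-norm and eigenvalue bounds.

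The $Z$-norm bound \eqref{eq:du_Lap} is the genuinely new ingredient. Reading the recursion in strong form and using $\nabla\cdot(a_j \nabla v) = a_j \Lap v + \nabla a_j \cdot \nabla v$, I can express $\Lap(\pdy^\bsnu u)$ pointwise in $\bsy$ as a linear combination of $\Lap(\pdy^{\bsnu-\bse_j} u)$, $\nabla(\pdy^{\bsnu-\bse_j} u)$ and lower-order eigenfunction derivatives, weighted respectively by $a_j$, $\nabla a_j$, $b_j$ and lower-order $\pdy^\bsm \lambda$. Convexity of $D$ and $a\in W^{1,\infty}$ give $\nrm{v}{Z} \lesssim \nrm{\Lap v}{L^2}$ for $v\in Z$, which supports a coupled induction on $\nrm{\cdot}{V}$ (weighted by $\bsbeta$) and $\nrm{\cdot}{Z}$ (weighted by $\bsbetabar$); the $\nabla a_j$ contributions are precisely what upgrade $\bsbeta$ to $\bsbetabar$. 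The final FE bounds follow by running the same argument on $V_h$: since $V_h$ is independent of $\bsy$ the Galerkin projection commutes with $\pdy^\bsnu$, and for $h\leq \hsuff$ the discrete spectral gap \eqref{eq:gap_h} gives the analogous coercivity on $u_h^{\perp_\calM}\cap V_h$. Naturally \eqref{eq:du_Lap} has no discrete counterpart, since $u_h \notin Z$ in general. The principal obstacle throughout is the interplay of the coupled $V$--$Z$ induction with the combinatorics required to extract the sharp $|\bsnu|!^{1+\upepsilon}$ growth and the correct split between $\bsbeta$- and $\bsbetabar$-weights, which is what necessitates the delicate technical lemmas deferred to the appendix.
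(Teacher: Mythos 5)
Your plan is correct and is essentially the paper's argument: the bounds \eqref{eq:dlambda} and \eqref{eq:du_V} (and their FE counterparts) rest on exactly the spectral-gap coercivity plus $\calM$-orthogonal decomposition induction that the paper imports from \cite[Theorem~3.4]{GGKSS19}, and your route to \eqref{eq:du_Lap} --- differentiate the strong form, use the affine dependence of $a,b$ on $\bsy$ to isolate $a\Delta\pdy^\bsnu u$ against lower-order Laplacians and $V$-norm terms, then close a coupled $V$--$Z$ induction with the $C_\upepsilon$ factorial-convolution bound, the $\nabla a_j$ terms forcing the upgrade from $\bsbeta$ to $\bsbetabar$ --- is precisely the paper's proof, which merely resolves the final recursion via \cite[Lemma~4]{DKLeGS16} instead of by hand. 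The only cosmetic slip is the remark that the Galerkin projection commutes with $\pdy^\bsnu$ (the projection $P_h(\bsy)$ is $\bsy$-dependent); what is actually used, and what you otherwise say, is that $V_h$ is a fixed $\bsy$-independent subspace of $V$, so the same argument runs on $V_h$ once $h\leq\hsuff$ guarantees the discrete gap \eqref{eq:gap_h}.
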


\begin{proof}
To facilitate the proof with a single constant for both sequences 
$\bsbeta$ and $\bsbetabar$ we define
\begin{equation}
\label{eq:C_beta}
C_\bsbeta \,\coloneqq\, \frac{2\overline{\lambda_2}}{\rho} 
\frac{\amin\overline{\lambda}}{\amax^2\underline{\lambda}}
\bigg(\frac{3\overline{\lambda}}{\underline{\lambda}} C_\upepsilon + 1\bigg),
\end{equation}
where $C_\upepsilon$ from \cite[Lemma~3.3]{GGKSS19} is given by
\[
C_\upepsilon \,\coloneqq\,
\frac{2^{1 - \upepsilon}}{1 - 2^{- \upepsilon}}
\bigg(\frac{e^2}{\sqrt{2\pi}}\bigg)^\upepsilon,
\]
which is independent of $\bsy$ and $h$.
Then clearly it follows that $C_\bsbeta$ is independent of $\bsy$ and $\bsh$.
Later we will use that $1/\amin \leq C_\bsbeta/2$ and $1/(\amin\chi^{1/2}) \leq C_\bsbeta/2$,
which both follow from the lower bounds $C_{\upepsilon} \geq 1$ for all $\upepsilon \in (0, 1)$ and $\overline{\lambda}/\underline{\lambda} \geq \amax^{2}/\amin^{2}(1 + 1/\chi)$.

The proof for the bounds \eqref{eq:dlambda} and \eqref{eq:du_V} is given
in \cite[Theorem~3.4]{GGKSS19}. 
If $h$ is sufficiently small such that the FE eigenvalues resolve 
the spectral gap (i.e., \eqref{eq:gap_h} holds)
then the bounds also hold for $\lambda_h(\bsy)$ and
$u_h(\bsy)$ because $V_h \subset V$, cf. \cite[Rem.~3.2 and 3.5]{GGKSS19}.

For the bound \eqref{eq:du_Lap}, we first prove a recursive bound
on $\|\partial^\bsnu u(\bsy)\|_Z$ and then use an induction result
from \cite{DKLeGS16} to prove the final bound.
Consider the strong form of the eigenproblem
\eqref{eq:evp} for the pair $(\lambda(\bsy), u(\bsy))$, which,
omitting the $\bsx$ and $\bsy$ dependence, is given by
\[
-\nabla \cdot (a \nabla u) + b u \,=\, c\lambda u.
\]

The $\bsnu$th derivative with respect to $\bsy$ commutes with the spatial derivatives $\nabla$.
Thus, using the Leibniz general product rule we have
\begin{align*}
-\nabla \cdot (a \nabla \pdy^\bsnu u) + b \pdy^\bsnu u 
+ \sum_{j = 1}^\infty \nu_j \big(-\nabla\cdot (a_j &\nabla \pdy^{\bsnu - \bse_j} u)
- b_j \pdy^{\bsnu - \bse_j} u\big)\\
\,&=\,
c \sum_{\bsm \leq \bsnu} \binom{\bsnu}{\bsm} \pdy^\bsm \lambda \ \pdy^{\bsnu - \bsm} u,
\end{align*}
where $\bse_j$ is the multi-index that is 1 in the $j$th entry and
zero elsewhere.
Then we can use the identity $\nabla \cdot (\phi \bspsi) = \phi \nabla \cdot \bspsi + \nabla \phi \cdot \bspsi$ to simplify this to
\begin{align*}
a\Delta \pdy^\bsnu u \,=\,
&- \grad a \cdot \grad \pdy^{\bsnu}u + b \pdy^\bsnu u
- c\sum_{\bsm \leq \bsnu} \binom{\bsnu}{\bsm} \pdy^\bsm \lambda \ 
\pdy^{\bsnu - \bsm} u\\
&+ \sum_{j = 1}^\infty \nu_j \big(-a_j \Delta \pdy^{\bsnu - \bse_j} u - \grad a_j \cdot \grad \pdy^{\bsnu - \bse}u
+ b_j \pdy^{\bsnu - \bse_j} u\big) .
\end{align*}

Since 
$a \geq \amin > 0;$ 
$a, a_j \in W^{1, \infty}$ and $b, b_j \in L^\infty$ for all $ j \in \N$;
and $\pdy^\bsm u \in V$ for all $\bsm \in \indx$,
it follows by induction on $|\bsnu|$ that 
$\Lap \pdy^\bsnu u \in L^2$.
This allows us to take the $L^2$-norm of both sides, which, after using the triangle inequality
and the bounds in \eqref{eq:coeff_bnd}, gives the following recursive bound
for $\Lap \pdy^\bsnu u$
\begin{align*}
\nrm{\Lap \pdy^\bsnu u}{L^2} \,\leq\, &
\frac{\nrm{\grad a}{L^\infty}}{\amin}\nrm{\pdy^\bsnu u}{V} + 
\frac{\nrm{b}{L^\infty}}{\amin} \nrm{\pdy^\bsnu u}{L^2}
\\
&+ \frac{\nrm{c}{L^\infty}}{\amin} \sum_{\bsm \leq \bsnu} \binom{\bsnu}{\bsm} 
|\pdy^\bsm \lambda | \nrm{\pdy^{\bsnu - \bsm} u}{L^2} 
+ \sum_{j = 1}^\infty \nu_j \frac{\nrm{a_j}{L^\infty}}{\amin} \nrm{\Lap \pdy^{\bsnu - \bse_j} u}{L^2}
\\
&+ \frac{1}{\amin}\sum_{j = 1}^\infty \nu_j \big(
\nrm{\grad a_j}{L^\infty} \nrm{\pdy^{\bsnu - \bse_j} u}{V}
+ \nrm{b_j}{L^\infty} \nrm{\pdy^{\bsnu - \bse_j}u}{L^2}\big) .
\end{align*}

Adding $\nrm{\pdy^\bsnu u}{L^2}$ to both sides and then using the definition of $\betabar_j$, 
we can write this bound in terms of the $Z$-norm as
\begin{equation}
\label{eq:du_Z_rec}
\nrm{\pdy^\bsnu u}{Z} \,\leq\, 
\nrm{\pdy^\bsnu u}{L^2} + \nrm{\Lap \pdy^\bsnu u}{L^2} \,\leq\,
\sum_{j = 1}^\infty \nu_j \betabar_j \nrm{\pdy^{\bsnu - \bse_j} u}{Z} + B_\bsnu,
\end{equation}
where we used that $1/\amin \leq C_\bsbeta$, and then defined
\begin{align*}
&B_\bsnu \,\coloneqq\, 
\frac{\nrm{\grad a}{L^\infty}}{\amin}\nrm{\pdy^\bsnu u}{V}
+
\frac{\nrm{c}{L^\infty}}{\amin} \sum_{\bsm \leq \bsnu} \binom{\bsnu}{\bsm} 
|\pdy^\bsm \lambda | \nrm{\pdy^{\bsnu - \bsm} u}{L^2}
\\
&\bigg(\frac{\nrm{b}{L^\infty}}{\amin} + 1\bigg) \nrm{\pdy^\bsnu u}{L^2}
+ \frac{1}{\amin}\sum_{j = 1}^\infty \nu_j \big(
\nrm{\grad a_j}{L^\infty} \nrm{\pdy^{\bsnu - \bse_j} u}{V}
+ \nrm{b_j}{L^\infty} \nrm{\pdy^{\bsnu - \bse_j}u}{L^2}\big).
\end{align*}

Now, the sum on the right of \eqref{eq:du_Z_rec} only involves lower-order versions 
of the object we are interested in bounding (namely, $\nrm{\pdy^\bsnu u}{Z}$), whereas
the terms in $B_\bsnu$ only involve derivatives that can be
bounded using one of \eqref{eq:dlambda} or \eqref{eq:du_V}.

We bound the remaining $L^2$-norms in $B_\bsnu$ by the Poincar\'e
inequality \eqref{eq:poin} to give
\begin{align*}
B_\bsnu \,\leq\, 
& \bigg(\frac{\nrm{\grad a}{L^\infty}}{\amin}+ 
\frac{\nrm{b}{L^\infty} + \amin}{\amin\sqrt{\evalueLap}}\bigg) \nrm{\pdy^\bsnu u}{V}
+ \frac{\nrm{c}{L^\infty}}{\amin\sqrt{\evalueLap}} 
\sum_{\bsm \leq \bsnu} \binom{\bsnu}{\bsm} 
|\pdy^\bsm \lambda | \nrm{\pdy^{\bsnu - \bsm} u}{V} 
\\
& + \frac{1}{\amin}\sum_{j = 1}^\infty \nu_j \bigg(
\nrm{\grad a_j}{L^\infty}
+ \frac{\nrm{b_j}{L^\infty}}{\sqrt{\evalueLap}} \bigg)
\nrm{\pdy^{\bsnu - \bse_j}u}{V}
\\
\leq\, 
&\frac{\amax}{\amin} \bigg(1 +  \frac{2}{\sqrt{\evalueLap}}\bigg)
\Bigg( \nrm{\pdy^\bsnu u}{V} + \sum_{\bsm \leq \bsnu} \binom{\bsnu}{\bsm} 
|\pdy^\bsm \lambda | \nrm{\pdy^{\bsnu - \bsm} u}{V} \Bigg)
\\
&+\frac{1}{\amin}\sum_{j = 1}^\infty \nu_j \bigg(
\nrm{\grad a_j}{L^\infty}
+ \frac{\nrm{b_j}{L^\infty}}{\sqrt{\evalueLap}} \bigg)
\nrm{\pdy^{\bsnu - \bse_j}u}{V},
\end{align*}
where in the last inequality we have bounded the  $L^\infty$-norms on the second line
using \eqref{eq:coeff_bnd}, and then simplified.
Then, substituting in the bounds \eqref{eq:dlambda} and \eqref{eq:du_V} gives
\begin{align*}
B_\bsnu \,\leq\, 
& \frac{\amax}{\amin} \bigg(1 +  \frac{2}{\sqrt{\evalueLap}}\bigg)
\Bigg(\ubar |\bsnu|!^{1 + \upepsilon} \bsbeta^\bsnu
+ \sum_{\bsm \leq \bsnu} \binom{\bsnu}{\bsm} 
\lambdabar |\bsm|!^{1 + \upepsilon} \bsbeta^\bsm \cdot
\ubar |\bsnu - \bsm|!^{1 + \upepsilon} \bsbeta^{\bsnu - \bsm}\Bigg)
\\
&+\frac{1}{\amin}\sum_{j = 1}^\infty \nu_j \bigg(
\nrm{\grad a_j}{L^\infty} + \frac{\nrm{b_j}{L^\infty}}{\sqrt{\evalueLap}} \bigg)
\ubar (|\bsnu| - 1)!^{1 + \upepsilon} \bsbeta^{\bsnu - \bse_j}
\\
\,=\, 
&\ubar\frac{\amax}{\amin} \bigg(1 +  \frac{2}{\sqrt{\evalueLap}}\bigg) \bsbeta^\bsnu
\Bigg( |\bsnu|!^{1 + \upepsilon} + 
\lambdabar\sum_{\bsm \leq \bsnu} \binom{\bsnu}{\bsm} 
|\bsm|!^{1 + \upepsilon} |\bsnu - \bsm|!^{1 + \upepsilon} \Bigg)\\
&+ \frac{\ubar}{\amin} (|\bsnu| - 1)!^{1 + \upepsilon}
\sum_{j = 1}^\infty \nu_j \bigg(
\nrm{\grad a_j}{L^\infty} + \frac{\nrm{b_j}{L^\infty}}{\sqrt{\evalueLap}} \bigg)
\bsbeta^{\bsnu - \bse_j} .
\end{align*}

Using the fact that $(1 + \chi^{-1/2})/\amin \leq C_\bsbeta$
and also that clearly $\beta_j \leq \betabar_j$, we have
\begin{align*}
B_\bsnu \,\leq\, &
\ubar\frac{\amax}{\amin} \bigg(1 +  \frac{2}{\sqrt{\evalueLap}}\bigg) \bsbetabar^\bsnu
\Bigg(|\bsnu|!^{1 + \upepsilon} + 
\lambdabar\sum_{\bsm \leq \bsnu} \binom{\bsnu}{\bsm} 
|\bsm|!^{1 + \upepsilon} |\bsnu - \bsm|!^{1 + \upepsilon} \Bigg)\\
& + \frac{\ubar}{\amin} (|\bsnu| - 1)!^{1 + \upepsilon} \sum_{j = 1}^\infty \nu_j 
\big(1 + \chi^{-1/2}\big) \max\big(\nrm{\grad a_j}{L^\infty}, \nrm{b_j}{L^\infty}\big) 
\bsbetabar^{\bsnu - \bse_j}
\\
\leq\,&
\ubar\frac{\amax}{\amin} \bigg(1 +  \frac{2}{\sqrt{\evalueLap}}\bigg) \bsbetabar^\bsnu
\Bigg(|\bsnu|!^{1 + \upepsilon} + \lambdabar\sum_{\bsm \leq \bsnu} \binom{\bsnu}{\bsm} 
|\bsm|!^{1 + \upepsilon} |\bsnu - \bsm|!^{1 + \upepsilon} \Bigg)
\\
&+ \ubar |\bsnu|!^{1 + \upepsilon}\,
\bsbetabar^{\bsnu}.
\end{align*}

The sum that remains can be bounded using the same strategy as in the proof of
\cite[Lemma~3.4]{GGKSS19}, as follows
\begin{align}
\label{eq:sum_bnd}
\nonumber
\sum_{\bsm \leq \bsnu} \binom{\bsnu}{\bsm} |\bsm|!^{1 + \upepsilon} |\bsnu - \bsm|!^{1 + \upepsilon}
=\, & 2|\bsnu|!^{1 + \upepsilon} + \sum_{k = 1}^{|\bsnu| - 1}  k!^{1 + \upepsilon} (|\bsnu| - k)!^{1 + \upepsilon}
\sum_{\bsm \leq \bsnu, |\bsm| = k} \binom{\bsnu}{\bsm}
\nonumber\\
=\, & 2|\bsnu|!^{1 + \upepsilon} + \sum_{k = 1}^{|\bsnu| - 1}  k!^{1 + \upepsilon} (|\bsnu| - k)!^{1 + \upepsilon}
\binom{|\bsnu|}{k}
\nonumber\\
=\, &|\bsnu|!^{1 + \upepsilon}\bigg(2 +  \sum_{k = 1}^{|\bsnu| - 1} \binom{|\bsnu|}{k}^{-\upepsilon}\bigg)
\nonumber\\
\leq\, & |\bsnu|!^{1 + \upepsilon} \Bigg(2 + 
\underbrace{\frac{2^{1 - \upepsilon}}{1 - 2^{-\upepsilon}} \bigg(\frac{e^2}{\sqrt{2\pi}}\bigg)^\upepsilon}_{C_\upepsilon}\Bigg)
,
\end{align}
where for the inequality on the last line we have used \cite[Lemma~3.3]{GGKSS19}.

Hence, $B_\bsnu$ is bounded above by
\[
B_\bsnu \,\leq\, C_B |\bsnu|!^{1 + \upepsilon}\, \bsbetabar^\bsnu,
\]
where
\[
C_B \,\coloneqq\, \ubar \bigg[\frac{\amax}{\amin} \bigg(1 + \frac{2}{\sqrt{\evalueLap}}\bigg)
\big(1 + \lambdabar(2 + C_\upepsilon)\big) + 1 \bigg]
\,<\, \infty
\]
is clearly independent of $\bsy$ and $\bsnu$.

Now we can bound the recursive formula \eqref{eq:du_Z_rec} using 
the bound above on $B_\bsnu$, which gives
\[
\nrm{\pdy^\bsnu u}{Z} \,\leq\, 
\sum_{j = 1}^\infty \nu_j \betabar_j \nrm{\pdy^{\bsnu - \bse_j} u}{Z} + C_B |\bsnu|!^{1 + \upepsilon} \,\bsbetabar^\bsnu.
\]

Finally, by \cite[Lemma~4]{DKLeGS16} we can bound this above by
\begin{align*}
\nrm{\pdy^\bsnu u}{Z} \,&\leq\, \sum_{\bsm \leq\, \bsnu} \binom{\bsnu}{\bsm}
|\bsm|! \bsbetabar^\bsm \ C_B |\bsnu - \bsm|!^{1 + \upepsilon}\, \bsbetabar^{\bsnu - \bsm}
\\
\,&=\, C_B \bsbetabar^\bsnu 
\sum_{\bsm \leq\, \bsnu} \binom{\bsnu}{\bsm}
|\bsm|! |\bsnu - \bsm|!^{1 + \upepsilon}
\,\leq\, C_B (2 + C_\upepsilon) |\bsnu|!^{1 + \upepsilon}\, \bsbetabar^\bsnu,
\end{align*}
where to obtain the final result we have again used \eqref{eq:sum_bnd}.
\end{proof}

\section{Error analysis}
\label{sec:err}
We now provide a rigorous analysis of the error for \eqref{eq:mlqmc0}, which we do by
verifying the assumptions from Theorems~\ref{thm:ML_abstract_lam} and 
\ref{thm:ML_abstract_G}.

Recall that we use the shorthand $\lambda_\ell \coloneqq \lambda_{h_\ell, s_\ell}$ 
for the dimension-truncated FE approximation of the minimal eigenvalue on level 
$\ell$, whereas
$\lambda_s$ denotes the minimal eigenvalue of the dimension-truncated version
of the continuous EVP \eqref{eq:trunc-evp}.
The bias (the first term) in \eqref{eq:err_decomp} can be bounded by the triangle inequality to give
\[
|\bbE_\bsy[\lambda - \lambda_{h_L, s_L}] |
\,\leq\, |\bbE_\bsy[\lambda - \lambda_{s_L}] | + |\bbE_\bsy[\lambda_{s_L} - \lambda_{h_L, s_L}]|,
\]
and similarly for the eigenfunction.
Now, both terms on the right can be bounded above using the results from the single level
algorithm. Explicitly, for $\calG \in H^{-1 + t}(D)$ with $t \in [0, 1]$ using 
Theorem~4.1  from \cite{GGKSS19} and then Theorem~\ref{thm:fe_err} gives the bounds
\begin{align}
\label{eq:bias_lam}
|\bbE_\bsy[\lambda - \lambda_L] | \,&\lesssim\, s_L^{-2/p + 1} + h_L^2,\\
\label{eq:bias_u}
|\bbE_\bsy[\calG(u - u_L)]| \,&\lesssim\, s_L^{-2/p + 1} + h_L^{1 + t},
\end{align}
with constants independent of $s_L$ and $h_L$.
That is, we have verified Assumptions~M1 from both Theorems~\ref{thm:ML_abstract_lam} and 
\ref{thm:ML_abstract_G} 
with $\alpha_\lambda = 2$, $\alpha_\calG = 1 + t$ and  $\alpha' = 2/p - 1$.

For the variance terms on each level in \eqref{eq:err_decomp} 
(alternatively to verify Assumption~M2), 
we must study the QMC error of the differences 
$\lambda_\ell - \lambda_{\ell - 1}$. 
Since $\lambda_\ell - \lambda_{\ell - 1} \in \calW_{s_\ell, \bsgamma}$ for all $\ell = 0, 1, 2, \ldots, L$ and each QMC rule $Q_\ell$ uses CBC-constructed generating vector $\bsz_\ell$, by \eqref{eq:cbc_err} we have the upper bound
\begin{equation}
\label{eq:Var_ell}
\bbV_\bsDelta[Q_\ell(\lambda_\ell - \lambda_{\ell - 1})] 
\,\leq\, \frac{C_{\xi, \ell}^2}{\varphi(N_\ell)^{1/\xi}} 
\nrm{\lambda_\ell - \lambda_{\ell - 1}}{\calW_{s_\ell, \bsgamma}}^2
\quad \text{for all } \xi \in (1/2, 1],
\end{equation}
where $C_{\xi, \ell}$ is the constant from \eqref{eq:cbc_err} with $s = s_\ell$.
Thus, in Assumption~M2 we can take $\eta = 1/\xi \in [1, 2)$ and for the other parameters
we must study the norm of the difference on each level.

By the triangle inequality, we can separate truncation and FE components of
the error
\begin{align}
\label{eq:err_ell_decomp}
\|\lambda_\ell& - \lambda_{\ell - 1}\|_{\calW_{s_\ell, \bsgamma}}
\nonumber\\
\,&\leq\, 
\nrm{\lambda_{s_\ell} - \lambda_{s_{\ell - 1}}}{\calW_{s_\ell, \bsgamma}}
+ \nrm{\lambda_{s_\ell} - \lambda_{h_\ell, s_\ell}}{\calW_{s_\ell, \bsgamma}}
+ \nrm{\lambda_{s_{\ell - 1}} - \lambda_{h_{\ell - 1}, s_{\ell - 1}}}{\calW_{s_{\ell - 1}, \bsgamma}}.
\end{align}
In contrast to the single level setting \cite{GGKSS19}, here we need to study
the truncation and FE errors in the weighted QMC norm \eqref{eq:W-norm}
instead of simply the expected truncation and FE errors.
Each term will be handled separately in the subsections that follow.

The key ingredient in the error analysis are the bounds of the
derivatives of the minimal eigenvalue and its eigenfunction that were given in 
Section~\ref{sec:reg}.

\subsection{Estimating the FE error}
\label{sec:fe-err}
As a first step towards bounding the FE errors in the $\Ws$-norm, we bound
their derivatives with respect to $\bsy$, which are given below in Theorem~\ref{thm:du-fe}.
The bulk of the work to bound the FE error in $\Ws$ is dedicated to 
proving these regularity bounds.
As in Theorem~\ref{thm:regularity_y} we also present bounds on higher-order 
mixed derivatives instead of simply the mixed first derivatives required in the
$\Ws$ norm.

The strategy for proving these bounds is similar to the proof 
\cite[Lemma~3.4]{GGKSS19}, except in the current multilevel setting
we need to bound the derivatives of the FE errors of the eigenvalue and eigenfunction,
in addition to the derivatives of the eigenvalue and eigenfunction themselves.
First, we differentiate variational
equations involving the errors to obtain a recursive formula for each of 
the eigenvalue and eigenfunction errors,  and then prove the bounds by induction
on the cardinality of $|\bsnu|$. 
Once we have proved the bound for the eigenfunction in \eqref{eq:du-fe},
the result for any functional $\calG(u(\bsy))$ in \eqref{eq:dGu-fe} 
follows by a duality argument.
Throughout the proofs in this section we will omit
the $\bsx$ and $\bsy$ dependence. Note also that throughout we must explicitly 
track the constants to ensure that they are independent of $\bsy$ and $h$, 
but also to make sure that in both of the inductive steps the constants are not growing, 
since this could interfere with the summability of $\bsbetahat$.
Also, the results in this section are all shown for $h$ sufficiently small, where here
\emph{sufficiently small} means that the FE eigenvalues resolve the spectral 
gap. Explicitly, we assume that $h \leq \hsuff$ (see \eqref{eq:hbar} and \eqref{eq:gap_h})
for some $\hsuff > 0 $ that is independent of $\bsy$.
This ensures that the condition that $h$
is sufficiently small (i.e., $h \leq \hsuff$) is also independent of $\bsy$.

In the following key lemma, we bound the derivative of the difference between the eigenfunction and its projection $\Ph u(\bsy)$ onto $V_h$, which is not equal to the 
FE eigenfunction $u_h(\bsy)$, but is easier to handle. 
The proof relies on the new mixed regularity estimate \eqref{eq:du_Lap}.

\begin{lemma}
Let $\bsnu \in \indx$ be a multi-index, let $h > 0$ be sufficiently small and suppose that Assumption~A\ref{asm:coeff} holds. Then
\begin{equation}
\label{eq:du_proj_err}
\nrm{\pdy^\bsnu u - \Ph \pdy^\bsnu u}{V}
\,\leq\,  C_{\calP} \, h |\bsnu|!^{1 + \upepsilon} \bsbetabar^\bsnu,
\end{equation}
where $\bsbetabar$ is as defined in \eqref{eq:betabar}, and
$C_{\calP}$ is independent of $\bsy$, $h$ and $\bsnu$.
\end{lemma}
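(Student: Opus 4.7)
The plan is to combine three standard ingredients: the $\calA(\bsy)$-orthogonality of the projection, the norm equivalence between $\|\cdot\|_{\calA(\bsy)}$ and $\|\cdot\|_V$, and the best approximation property of $V_h$, and then to close the estimate using the newly-established mixed regularity bound \eqref{eq:du_Lap} from Theorem~\ref{thm:regularity_y}. The crucial observation is that the object being estimated is $\pdy^\bsnu u - \Ph \pdy^\bsnu u$, i.e., the Galerkin projection applied to the derivative, \emph{not} the derivative of the Galerkin projection. This is exactly the situation in which $\Ph$ delivers the best approximation in $V_h$ of the function $\pdy^\bsnu u \in V$, and we do not have to worry about the $\bsy$-dependence of $\Ph(\bsy)$ interacting with the differentiation.

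Concretely, the chain would be as follows. First, by the left inequality in \eqref{eq:A_equiv},
\begin{equation*}
\nrm{\pdy^\bsnu u - \Ph \pdy^\bsnu u}{V}
\,\leq\, \frac{1}{\sqrt{\amin}} \nrm{\pdy^\bsnu u - \Ph \pdy^\bsnu u}{\calA(\bsy)}.
\end{equation*}
Next, the $\calA(\bsy)$-orthogonality characterisation of $\Ph$ (see the comment following \eqref{eq:P_h}) gives
\begin{equation*}
\nrm{\pdy^\bsnu u - \Ph \pdy^\bsnu u}{\calA(\bsy)}
\,=\, \inf_{v_h \in V_h} \nrm{\pdy^\bsnu u - v_h}{\calA(\bsy)}
\,\leq\, \sqrt{\amax\bigl(1 + \evalueLap^{-1}\bigr)}
\inf_{v_h \in V_h} \nrm{\pdy^\bsnu u - v_h}{V},
\end{equation*}
using the right inequality in \eqref{eq:A_equiv}. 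Since $\pdy^\bsnu u \in Z$ (which follows from the argument that established $\Lap \pdy^\bsnu u \in L^2$ in the proof of Theorem~\ref{thm:regularity_y}), the best approximation property \eqref{eq:best_app} yields
\begin{equation*}
\inf_{v_h \in V_h} \nrm{\pdy^\bsnu u - v_h}{V}
\,\lesssim\, h \, \nrm{\pdy^\bsnu u}{Z}.
\end{equation*}
Finally, the mixed regularity bound \eqref{eq:du_Lap} from Theorem~\ref{thm:regularity_y} gives $\nrm{\pdy^\bsnu u}{Z} \leq C \, |\bsnu|!^{1+\upepsilon} \bsbetabar^\bsnu$, and combining the four inequalities produces \eqref{eq:du_proj_err} with
\begin{equation*}
C_\calP \,\coloneqq\, C \cdot \sqrt{\amax(1 + \evalueLap^{-1})/\amin} \cdot C_{\mathrm{BA}},
\end{equation*}
where $C_{\mathrm{BA}}$ is the hidden constant from \eqref{eq:best_app}.

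I expect essentially no obstacle here: all the hard work has already been carried out in Theorem~\ref{thm:regularity_y}, whose bound on $\nrm{\pdy^\bsnu u}{Z}$ is precisely what makes this lemma trivial. The only points that require a moment's care are (i) verifying that the constant $C_\calP$ really is independent of $\bsy$, $h$ and $\bsnu$---this holds because $\amin,\amax,\evalueLap$ are uniform in $\bsy$, $C_{\mathrm{BA}}$ depends only on the shape-regularity of the mesh family (and not on $h$), and the constant in \eqref{eq:du_Lap} is independent of $\bsy$ (though it does depend on $\upepsilon$)---and (ii) noting that because the sequence $\bsbetabar$ rather than $\bsbeta$ appears on the right, we have correctly used the $Z$-norm estimate rather than the $V$-norm estimate. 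The factor $h$ and the combinatorial factor $|\bsnu|!^{1+\upepsilon}\bsbetabar^\bsnu$ both arrive at the right place and at the right order, so no induction or additional telescoping is needed.
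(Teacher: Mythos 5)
Your proposal is correct and is essentially identical to the paper's proof: both pass from the $V$-norm to the $\calA(\bsy)$-norm, use the $\calA$-orthogonality of $\Ph$ to get the best approximation in $V_h$, apply \eqref{eq:best_app}, and close with the mixed regularity bound \eqref{eq:du_Lap}, yielding the same constant $\sqrt{\tfrac{\amax}{\amin}\bigl(1+\tfrac{1}{\evalueLap}\bigr)}\,C\,C_{\mathrm{BA}}$ independent of $\bsy$, $h$ and $\bsnu$. Your added remarks on why $\pdy^\bsnu u\in Z$ and on the uniformity of the constants match what is implicit in the paper.
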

\begin{proof}
Using the equivalence of the $V$-norm and the induced $\calA$-norm in \eqref{eq:A_equiv}, 
along with the $\calA$-orthogonality of the projection and the best approximation 
property \eqref{eq:best_app}, we get
\begin{align*}
\nrm{\pdy^\bsnu u - \Ph \pdy^\bsnu u}{V} 
\,&\leq\, \sqrt{\frac{\amax}{\amin}\bigg(1 + \frac{1}{\evalueLap}\bigg)} \inf_{v_h \in V_h} \nrm{\pdy^\bsnu u - v_h}{V}
\\
\,&\leq\,  \sqrt{\frac{\amax}{\amin}\bigg(1 + \frac{1}{\evalueLap}\bigg)} 
C h \|\pdy^\bsnu u\|_Z
\,\leq\, C_{\calP} h |\bsnu |!^{1 + \upepsilon} \bsbetabar^\bsnu,
\end{align*}
where for the last inequality we have used the bound \eqref{eq:du_Lap}.
The final constant $C_{\calP}$ is  independent of $\bsy$, $h$ and also $\bsnu$.
\end{proof}

The three recursive formulae presented in the next two lemmas are the key to the
induction proof to bound the derivatives of the FE error.
The general strategy is to differentiate variational equations involving the FE errors.
However, the proofs are quite long and technical, and as such are deferred to 
the Appendix.

\begin{lemma}
\label{lem:dlam_fe_rec}
Let $\bsnu \in \indx$ be a multi-index, let $h > 0$ be sufficiently small and suppose that Assumption~A\ref{asm:coeff} holds.
Then, for all $\bsy \in \Omega$, the following two recursive bounds hold
\begin{align}
\label{eq:dlam_fe_rec_a}
&|\pdy^\bsnu(\lambda - \lambda_h)| \,\leq\,
\CI\Bigg( h |\bsnu|! \bsbetabar^\bsnu + 
\sum_{j = 1}^\infty \nu_j \beta_j
\nrm{\pdy^{\bsnu - \bse_j}(u - u_h)}{V}
\nonumber\\ &
+\sum_{\substack{\bszero \neq \bsm \leq \bsnu \\ \bsm \neq \bsnu}} 
\binom{\bsnu}{\bsm} |\bsm|!^{1 + \upepsilon} \bsbeta^{\bsm}
\Big[\nrm{\pdy^{\bsnu - \bsm}(u - u_h)}{V}
+ |\pdy^{\bsnu - \bsm}(\lambda - \lambda_h)|\Big] \Bigg)
\end{align}
and
\begin{align}\label{eq:dlam_fe_rec_b}
&|\pdy^\bsnu(\lambda - \lambda_h)| \,\leq\,
\CII\Bigg(\sum_{\bsm \leq\bsnu} \binom{\bsnu}{\bsm}
\nrm{\pdy^{\bsnu - \bsm}(u - u_h)}{V} \nrm{\pdy^\bsm(u - u_h)}{V}
\\\nonumber
&+ \sum_{j = 1}^\infty\sum_{\bsm \leq \bsnu - \bse_j} 
\nu_j \binom{\bsnu - \bse_j}{\bsm} \beta_j 
\nrm{ \pdy^{\bsnu - \bse_j - \bsm}(u - u_h)}{V}
\nrm{\pdy^\bsm (u - u_h)}{V}
\\\nonumber
&+ \sum_{\bsm \leq \bsnu} \sum_{\bsk \leq \bsm}
\binom{\bsnu}{\bsm} \binom{\bsm}{\bsk} 
|\bsnu - \bsm|!^{1 + \upepsilon} \bsbeta^{\bsnu - \bsm}
\nrm{\pdy^{\bsm - \bsk}(u - u_h)}{V} \nrm{\pdy^\bsk(u - u_h))}{V}\Bigg),
\end{align}
where $\bsbeta$, $\bsbetabar$ are defined in \eqref{eq:beta}, \eqref{eq:betabar}, respectively,
and $\CI,\ \CII$ are independent of $\bsy$, $h$ and $\bsnu$.
\end{lemma}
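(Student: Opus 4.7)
The plan is to derive both recursive bounds from algebraic identities for $\lambda(\bsy) - \lambda_h(\bsy)$ that only involve the FE error $u - u_h$ (and, for \eqref{eq:dlam_fe_rec_a}, the projection error $u - P_h u$), then apply $\pdy^\bsnu$ via the general Leibniz rule and bound the resulting ``cheap'' factors using Theorem~\ref{thm:regularity_y}, \eqref{eq:du_proj_err}, and the coefficient bounds \eqref{eq:coeff_bnd}. A crucial simplification is that $\calA(\bsy;\cdot,\cdot)$ is \emph{affine} in $\bsy$---so $\pdy^{\bsm}\calA(\bsy;\cdot,\cdot) = 0$ unless $|\bsm|\leq 1$, with $\pdy^{\bse_j}\calA = \calA_j$, the bilinear form obtained by replacing $a$ and $b$ by $a_j$ and $b_j$---while $\calM$ is independent of $\bsy$. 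This keeps every Leibniz expansion finite, and each $\calA_j$-pickup produces precisely the weight $\beta_j \simeq \max(\nrm{a_j}{L^\infty},\nrm{b_j}{L^\infty})$ appearing in the bounds.

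For the quadratic bound \eqref{eq:dlam_fe_rec_b}, I start from the Strang--Fix identity
\[
\lambda(\bsy) - \lambda_h(\bsy) \,=\, \lambda(\bsy)\,\calM(u - u_h, u - u_h) \,-\, \calA(\bsy; u - u_h, u - u_h),
\]
which follows by expanding $\calA(\bsy; u - u_h, u - u_h)$ using the continuous and FE variational EVPs together with $\calM(u_h, u_h) = 1$. Applying $\pdy^\bsnu$ and expanding $\calA(\bsy; u - u_h, u - u_h)$ by the multi-index Leibniz rule splits into exactly two contributions---the $\bsm_1 = \bszero$ piece gives the first sum of \eqref{eq:dlam_fe_rec_b}, and the $\bsm_1 = \bse_j$ pieces give the second---while the triple-Leibniz expansion of $\lambda\,\calM(u - u_h, u - u_h)$, combined with \eqref{eq:dlambda} for $\pdy^\bsm \lambda$ and the norm equivalence \eqref{eq:M_equiv} with the Poincar\'e inequality \eqref{eq:poin} for the $\calM$-factors, yields the third sum. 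All $\bsy$- and $\bsnu$-independent prefactors are collected into a single $\CII$.

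For the linear bound \eqref{eq:dlam_fe_rec_a}, I would use the Galerkin-orthogonality identity
\[
(\lambda(\bsy) - \lambda_h(\bsy))\,\calM(u, u_h) \,=\, -\lambda_h\,\calM(u - P_h u,\, u_h),
\]
which follows by subtracting the FE EVP tested against $P_h u \in V_h$ from the continuous EVP tested against $u_h \in V$ and using $\calA(\bsy; u - P_h u, v_h) = 0$ for all $v_h \in V_h$. Differentiating both sides by Leibniz, the $\bsm = \bszero$ term on the left isolates $\pdy^\bsnu(\lambda - \lambda_h)\,\calM(u, u_h)$; on the right the projection error $\pdy^\bsnu u - P_h \pdy^\bsnu u$ appears, which by \eqref{eq:du_proj_err} gives the $h|\bsnu|!\,\bsbetabar^\bsnu$ term. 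The remaining lower-order Leibniz contributions split into the $\beta_j$-weighted first-order sum (from each single $\calA_j$-pickup) and the mixed-order sum over $\bszero\neq\bsm\neq\bsnu$ (from Leibniz applied to the left-hand product and to the $\lambda_h$-factor on the right). Since $\calM(u, u_h) \to \calM(u, u) = 1$ uniformly in $\bsy$ as $h \to 0$ by \eqref{eq:fe_u}, dividing by this factor for $h$ sufficiently small only inflates the constant by a $\bsy$- and $h$-independent amount absorbed into $\CI$.

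The main obstacle is the careful bookkeeping required to ensure that (i) the top-level terms $\pdy^\bsnu(u - u_h)$ and $\pdy^\bsnu(\lambda - \lambda_h)$ never reappear on the right, so that the recursion genuinely decreases in $|\bsnu|$, and (ii) the constants $\CI, \CII$ remain independent of $\bsnu, \bsy, h$ despite the combinatorial blow-up of the multi-index Leibniz rule. A subtle point for \eqref{eq:dlam_fe_rec_a} is that $P_h$ itself depends on $\bsy$, so one cannot differentiate $P_h u$ in isolation; instead one differentiates the identity as a whole and arranges the resulting terms so that the only projection-error object that appears is $\pdy^\bsnu u - P_h \pdy^\bsnu u$, controlled by \eqref{eq:du_proj_err}. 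These largely technical issues account for the length of the argument and justify deferring the full proof to the Appendix.
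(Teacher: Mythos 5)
Your treatment of the quadratic bound \eqref{eq:dlam_fe_rec_b} is essentially the paper's own argument: the identity $\lambda-\lambda_h = \lambda\,\calM(u-u_h,u-u_h)-\calA(u-u_h,u-u_h)$ (the paper cites \cite[Lemma~3.1]{BO89}) is differentiated by Leibniz, the affine $\bsy$-dependence of $\calA$ produces exactly the first two sums, and the $\lambda\calM$-term with \eqref{eq:dlambda}, Cauchy--Schwarz, \eqref{eq:M_equiv} and \eqref{eq:poin} produces the third. No issues there.

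For the linear bound \eqref{eq:dlam_fe_rec_a}, however, there is a genuine gap. Your identity $(\lambda-\lambda_h)\,\calM(u,u_h)=-\lambda_h\,\calM(u-\Ph u,u_h)$ is correct, but differentiating it forces you to differentiate the composite $\Ph(\bsy)u(\bsy)$: the Leibniz expansion of the right-hand side contains $\calM\big(\pdy^{\bsm}(u-\Ph u),\,\cdot\big)$ for \emph{all} $\bsm\leq\bsnu$, and since $\Ph$ depends on $\bsy$ through $\calA(\bsy)$ one has $\pdy^{\bsm}(u-\Ph u)\neq \pdy^{\bsm}u-\Ph\pdy^{\bsm}u$ in general. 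The estimate \eqref{eq:du_proj_err} controls only the latter object (the projection error of the \emph{already differentiated} eigenfunction), so your appeal to it does not close the argument; you would need an additional induction, e.g.\ differentiate the Galerkin orthogonality $\calA(\bsy;u-\Ph u,v_h)=0$, write $\pdy^{\bsm}(u-\Ph u)=(\pdy^{\bsm}u-\Ph\pdy^{\bsm}u)+v_h$ with $v_h\in V_h$, and use coercivity to prove $\nrm{\pdy^{\bsm}(u-\Ph u)}{V}\lesssim h\,|\bsm|!^{1+\upepsilon}\bsbetabar^{\bsm}$ before your bookkeeping goes through. The paper sidesteps this entirely: it differentiates the $\Ph$-free identity \eqref{eq:var_fe_err} for a fixed test function $v_h$, then sets $v_h=u_h$ and inserts $\Ph$ only in front of the already differentiated error $\pdy^{\bsnu}(u-u_h)$ using the orthogonality \eqref{eq:P_h}. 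This way (i) the top-order term $\pdy^{\bsnu}(\lambda-\lambda_h)$ appears with coefficient $\calM(u_h,u_h)=1$, so no division by $\calM(u,u_h)$ and no smallness-of-$h$ argument for that factor is needed; (ii) the top-order eigenfunction-error contribution cancels exactly because $(\lambda_h,u_h)$ satisfies \eqref{eq:fe-evp} with $\Ph\pdy^{\bsnu}(u-u_h)\in V_h$ as test function; and (iii) the only projection object appearing is precisely $\pdy^{\bsnu}u-\Ph\pdy^{\bsnu}u$, which \eqref{eq:du_proj_err} covers. (Incidentally, even if you patch your route as indicated, it produces a bound without the $\nrm{\pdy^{\bsnu-\bse_j}(u-u_h)}{V}$-terms, which still implies \eqref{eq:dlam_fe_rec_a} since those terms are nonnegative; the missing ingredient is solely the control of the derivatives of the Ritz projection error.)
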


\begin{lemma}
\label{lem:du-fe_rec}
Let $\bsnu \in \indx$ be a multi-index, let $h > 0$ be sufficiently small and suppose that Assumption~A\ref{asm:coeff} holds.
Then, for all $\bsy \in \Omega$,
\begin{align}
\label{eq:du_fe_rec}
\nonumber\nrm{\pdy^\bsnu (u -& u_h)}{V} 
\leq\, 
\CIII \Bigg( h |\bsnu|!^{1 + \upepsilon} \bsbetabar^\bsnu
+
\sum_{j = 1}^\infty \nu_j \beta_j
\nrm{\pdy^{\bsnu - \bse_j} (u - u_h)}{V}
\\
&+\,
\sum_{\substack{\bszero \neq \bsm \leq \bsnu \\ \bsm \neq \bsnu}}
\binom{\bsnu}{\bsm} |\bsm|!^{1 + \upepsilon} \bsbeta^\bsm
\Big[\nrm{\pdy^{\bsnu - \bsm}(u - u_h)}{V}
+ |\pdy^{\bsnu - \bsm}(\lambda - \lambda_h)|\Big]\Bigg),
\end{align}
where $\bsbeta$, $\bsbetabar$ are defined in \eqref{eq:beta}, \eqref{eq:betabar}, respectively,
and $\CIII$ is independent of $\bsy$, $h$ and $\bsnu$.
\end{lemma}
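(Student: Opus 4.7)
The plan is to differentiate the Galerkin-type identity
\[
\calA(\bsy; u-u_h, v_h) \,=\, \lambda\,\calM(u,v_h) - \lambda_h\,\calM(u_h,v_h), \qquad v_h \in V_h,
\]
obtained by testing both \eqref{eq:var-evp} and \eqref{eq:fe-evp} against $v_h\in V_h$. After applying $\pdy^\bsnu$, I would decompose $\pdy^\bsnu(u-u_h) = (\pdy^\bsnu u - \Ph \pdy^\bsnu u) + w_h$ with $w_h \coloneqq \Ph\pdy^\bsnu u - \pdy^\bsnu u_h \in V_h$, control the first summand directly by the projection bound \eqref{eq:du_proj_err} (giving the leading $h|\bsnu|!^{1+\upepsilon}\bsbetabar^\bsnu$ term), and estimate $\|w_h\|_V$ by testing the differentiated identity against $w_h$ itself, exploiting $\calA$-coercivity \eqref{eq:A_equiv} and the uniform discrete spectral gap \eqref{eq:gap_h}.

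First I would compute $\pdy^\bsnu$ of both sides. On the left the affine dependence of $\calA$ on $\bsy$ kills all mixed derivatives of order $\geq 2$, leaving $\calA(\bsy; \pdy^\bsnu(u-u_h), v_h)$ plus an isolated $\sum_j \nu_j \calA_j'(\pdy^{\bsnu-\bse_j}(u-u_h),v_h)$ term; this is the source of the $\sum_j \nu_j \beta_j \|\pdy^{\bsnu-\bse_j}(u-u_h)\|_V$ contribution. On the right I would apply Leibniz and then rewrite $\pdy^{\bsm} \lambda_h = \pdy^\bsm\lambda - \pdy^\bsm(\lambda-\lambda_h)$ and $\pdy^{\bsnu-\bsm} u_h = \pdy^{\bsnu-\bsm}u - \pdy^{\bsnu-\bsm}(u-u_h)$ so that the ``full regularity'' factors can be bounded by Theorem~\ref{thm:regularity_y} while the ``error'' factors remain explicit. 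The $\bsm=\bszero$ contribution from the $\lambda$-sum yields $\lambda\calM(\pdy^\bsnu(u-u_h),v_h)$, which I would move to the left-hand side; taking $v_h=w_h$, the $\calA$-orthogonality \eqref{eq:P_h} of $\Ph$ converts $\calA(\bsy;\pdy^\bsnu(u-u_h),w_h)$ into $\calA(\bsy;w_h,w_h)$, so the left-hand side becomes the indefinite form $\calA(\bsy;w_h,w_h)-\lambda\calM(w_h,w_h)$, which by the spectral-gap argument (decomposing $w_h = \alpha u_h + w_h^\perp$ with $\calM(u_h,w_h^\perp)=0$ and using \eqref{eq:gap_h}) is bounded below by $(\rho/2)\|w_h^\perp\|_\calM^2$, and hence, after combining with $\amin\|w_h\|_V^2 \leq \calA(\bsy;w_h,w_h)$ via \eqref{eq:M_equiv}, by a uniform constant times $\|w_h\|_V^2$. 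All remaining right-hand-side terms are then bounded componentwise using the regularity estimates \eqref{eq:dlambda}--\eqref{eq:du_V} for factors carrying no FE error and \eqref{eq:du_Lap} only through the projection step, producing precisely the sum indexed by $\bszero\neq\bsm\leq\bsnu$, $\bsm\neq\bsnu$ appearing in \eqref{eq:du_fe_rec}.

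The main obstacle is the $\bsm=\bsnu$ contribution from the eigenvalue-error sum, namely $\pdy^\bsnu(\lambda-\lambda_h)\,\calM(u_h,w_h)$, whose top-order factor cannot be absorbed directly and would violate the form of the bound (which excludes $\bsm=\bsnu$). The resolution is to exploit the normalisation $\calM(u_h,u_h)=1$: differentiating gives $\pdy^\bsnu\calM(u_h,u_h)=0$, so by Leibniz
\[
2\,\calM(u_h,\pdy^\bsnu u_h) \,=\, -\sum_{\substack{\bszero\neq\bsm\leq\bsnu\\ \bsm\neq\bsnu}} \binom{\bsnu}{\bsm}\calM(\pdy^\bsm u_h,\pdy^{\bsnu-\bsm}u_h),
\]
which, combined with $\calM(u_h,w_h) = \calM(u_h,\Ph\pdy^\bsnu u - \pdy^\bsnu u_h)$ and the projection error \eqref{eq:du_proj_err}, rewrites the offending term as a combination of strictly lower-order derivatives of $u-u_h$ and $\lambda-\lambda_h$, hence of the shape already permitted in \eqref{eq:du_fe_rec}. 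A secondary but essential technical task is to track every constant (from $\amin,\amax$, the Poincar\'e constant, $\rho$, $C_{\calP}$ and the regularity constants $\ubar,\lambdabar$) so that the final $\CIII$ is independent of $\bsy$, $h$ and $\bsnu$; the condition $h\leq\hsuff$ from \eqref{eq:hbar} is used once, and only to guarantee that \eqref{eq:gap_h} is uniform in $\bsy$ and $h$.
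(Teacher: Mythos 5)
Your overall skeleton (split off the projection error, work in $V_h$, use the discrete spectral gap, differentiate the normalisation conditions) matches the paper's, but two steps as stated do not go through. First, the coercivity claim is false: the indefinite form $\calA(\bsy;w_h,w_h)-\lambda\calM(w_h,w_h)$ is \emph{not} bounded below by a uniform constant times $\nrm{w_h}{V}^2$ on $V_h$. Writing $w_h=\alpha u_h+w_h^\perp$ with $\calM(u_h,w_h^\perp)=0$, the form equals $(\lambda_h-\lambda)\alpha^2$ plus the contribution of $w_h^\perp$, so the gap argument via \eqref{eq:gap_h} controls only $\nrm{w_h^\perp}{V}$ and is blind to the component $\alpha u_h$; combining with $\amin\nrm{w_h}{V}^2\le\calA(\bsy;w_h,w_h)$ cannot repair this, since the subtracted term $\lambda\calM(w_h,w_h)$ is exactly what destroys coercivity in the $u_h$-direction. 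The coefficient $\alpha=\calM(w_h,u_h)$ must be estimated \emph{separately}; this is what the paper does by bounding $\calM(\Ph\pdy^\bsnu(u-u_h),u_h)$ through the differentiated normalisations of both $u$ and $u_h$, carefully recombined so that only \emph{differences} $\pdy^{\bsm}(u-u_h)$ appear (bounding $\calM(\pdy^{\bsnu-\bsm}u,\pdy^\bsm u)$ and $\calM(\pdy^{\bsnu-\bsm}u_h,\pdy^\bsm u_h)$ individually by the regularity estimates would not produce the error structure of \eqref{eq:du_fe_rec}). You have the normalisation idea in hand, but you deploy it for a different purpose and still assert the (incorrect) full-norm coercivity.

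Second, your treatment of the term $\pdy^\bsnu(\lambda-\lambda_h)\,\calM(u_h,w_h)$ does not work: bounding the factor $\calM(u_h,w_h)$ by lower-order quantities does not remove the top-order factor $\pdy^\bsnu(\lambda-\lambda_h)$ from the product, and $|\pdy^{\bsnu}(\lambda-\lambda_h)|$ is precisely what the target bound \eqref{eq:du_fe_rec} excludes (the sum there starts at $\bsm\neq\bszero$); bounding it crudely via \eqref{eq:dlambda} would instead produce $|\bsnu|!^{2(1+\upepsilon)}$ and a product of two $\bsbeta$-type weights, ruining the form needed for the induction in Theorem~\ref{thm:du-fe}. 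The paper's resolution is structurally different: it tests with the $\calM$-orthogonal component $\varphi_h$ (so the $u_h$-direction is handled elsewhere), accepts the appearance of $|\pdy^\bsnu(\lambda-\lambda_h)|$ in the intermediate bound \eqref{eq:phi_h_bound}, and then eliminates it by substituting the \emph{first} eigenvalue recursion \eqref{eq:dlam_fe_rec_a} of Lemma~\ref{lem:dlam_fe_rec}, whose right-hand side involves only strictly lower-order error derivatives. Your proposal never invokes Lemma~\ref{lem:dlam_fe_rec}, and without that substitution (or the exact $\calM$-orthogonality $\calM(u_h,\varphi_h)=0$ obtained by testing with $\varphi_h$ rather than $w_h$) the top-order eigenvalue-error derivative cannot be brought into the admissible form.
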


The astute reader may now ask, why do we need both the bounds \eqref{eq:dlam_fe_rec_a} and \eqref{eq:dlam_fe_rec_b} on the derivative of the eigenvalue error? 
The reason is that the upper bound in \eqref{eq:dlam_fe_rec_a} depends only
on derivatives with order strictly less than $\bsnu$, 
whereas the bound in \eqref{eq:dlam_fe_rec_b} depends on $\pdy^\bsnu(u - u_h)$.
Hence the inductive step for the \emph{eigenfunction} (see \eqref{eq:du-fe} below) 
only works with \eqref{eq:dlam_fe_rec_a}.
On the other hand, \eqref{eq:dlam_fe_rec_a} cannot be used for the inductive step for the 
\emph{eigenvalue} result (see \eqref{eq:dlam_fe} below), 
because it will only result in a bound of order $\bigO(h)$.
Hence, the second bound \eqref{eq:dlam_fe_rec_b} is required to maintain
the optimal rate of $\bigO(h^2)$ for the eigenvalue error.

We now have the necessary ingredients to prove the following bounds
on the derivatives of the FE error.

\begin{theorem}
\label{thm:du-fe}
Let $\bsnu \in \indx$ be a multi-index, let $h > 0$ be sufficiently small and suppose that Assumption~A\ref{asm:coeff} holds.
Define the sequence $\bsbetahat = (\betahat_j)_{j \in \N}$ by
\begin{equation}
\label{eq:betahat}
\betahat_j \,\coloneqq\, \widehat{C}_\bsbeta 
\max\big(\nrm{a_j}{L^\infty}, \nrm{b_j}{L^\infty}, \nrm{\nabla a_j}{L^\infty}\big),
\end{equation}
where $\Chat_\bsbeta$, given explicitly below in \eqref{eq:C_hat}, is
independent of $\bsy$, $h$ and $j$.
Then 
\begin{align}
\label{eq:dlam_fe}
\big|\pdy^\bsnu\big[\lambda(\bsy)  - \lambda_h(\bsy)\big]\big| 
\,&\leq\,
C_1\,  |\bsnu|!^{1 + \upepsilon}\bsbetahat^\bsnu h^2,\\
\label{eq:du-fe}
\nrm{\pdy^\bsnu\big[u(\bsy) - u_h(\bsy)\big]}{V} 
\,&\leq\, 
C_2\, |\bsnu|!^{1 + \upepsilon} \bsbetahat^\bsnu h,
\end{align}
and for $\calG \in H^{-1 + t}(D)$
\begin{equation}
\label{eq:dGu-fe}
\big|\pdy^\bsnu\calG (u(\bsy) - u_h(\bsy))\big| \,\leq\, 
C_3 \,
|\bsnu|!^{1 + \upepsilon} \bsbetahat^\bsnu h^{1 + t},
\end{equation}
with $C_1, C_2, C_3$ all independent of $\bsy$, $h$ and $\bsnu$.
\end{theorem}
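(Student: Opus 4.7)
The proof will proceed by induction on $|\bsnu|$, bootstrapping on the three recursive bounds just established. The base case $\bsnu = \bszero$ for both \eqref{eq:dlam_fe} and \eqref{eq:du-fe} is given directly by Theorem~\ref{thm:fe_err}, since $|\bsnu|! = 1$ and $\bsbetahat^\bsnu = 1$. A single constant $\widehat{C}_\bsbeta$ will be selected so that $\beta_j, \betabar_j \leq \betahat_j$ and so that the induction closes up; choosing it correctly, as for $C_\bsbeta$ in \eqref{eq:C_beta}, amounts to tracking constants carefully through each step.

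The plan is to first prove the eigenfunction bound \eqref{eq:du-fe} and then use it to deduce the eigenvalue bound \eqref{eq:dlam_fe}. For the inductive step on \eqref{eq:du-fe}, I would start from the recursive bound \eqref{eq:du_fe_rec} of Lemma~\ref{lem:du-fe_rec} and eliminate the eigenvalue-error terms $|\pdy^{\bsnu - \bsm}(\lambda - \lambda_h)|$ by applying the first recursive bound \eqref{eq:dlam_fe_rec_a} from Lemma~\ref{lem:dlam_fe_rec}. This is the right choice because, as the authors themselves highlight, \eqref{eq:dlam_fe_rec_a} involves only strictly lower-order derivatives of $u - u_h$, and its $\calO(h)$ rate is compatible with the target $\calO(h)$ rate for $\|\pdy^\bsnu(u - u_h)\|_V$. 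After this substitution, every term on the right-hand side is expressed in terms of $\|\pdy^{\bsnu'}(u - u_h)\|_V$ with $|\bsnu'|<|\bsnu|$, the projection-error bound \eqref{eq:du_proj_err}, or the a priori bounds of Theorem~\ref{thm:regularity_y}; the inductive hypothesis then applies, and the remaining factorial/combinatorial sums are absorbed using the identity \eqref{eq:sum_bnd} together with $\beta_j \leq \betahat_j$.

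For the inductive step on \eqref{eq:dlam_fe}, I would use the second recursion \eqref{eq:dlam_fe_rec_b}. Every summand there is a product of two $V$-norms of eigenfunction FE errors, which by the freshly established bound \eqref{eq:du-fe} (valid now at all orders $\leq \bsnu$) contributes the $h^2$ factor that \eqref{eq:dlam_fe_rec_a} alone could not deliver. The double combinatorial sum is again collapsed via \eqref{eq:sum_bnd} applied twice, yielding the factor $|\bsnu|!^{1+\upepsilon}\bsbetahat^\bsnu$.

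Finally, for the functional bound \eqref{eq:dGu-fe}, I would use a Nitsche-type duality argument applied to the derivative $\pdy^\bsnu(u - u_h)$. Concretely, one introduces the adjoint problem for $\calG \in H^{-1+t}(D)$, whose solution $z \in H^{1+t}(D) \cap V$ has $\|z\|_{H^{1+t}} \lesssim \|\calG\|_{H^{-1+t}}$, then differentiates the variational identity for $u - u_h$ and pairs it with $z$, using the Galerkin-type orthogonality and the Leibniz rule to peel off derivatives of the coefficients. An equivalent route is to first prove the $L^2$-analogue $\|\pdy^\bsnu(u - u_h)\|_{L^2}\lesssim h^2 |\bsnu|!^{1+\upepsilon}\bsbetahat^\bsnu$ by this duality and then interpolate against the $V$-bound \eqref{eq:du-fe}. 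The main obstacle throughout is the bookkeeping of constants: each inductive round must absorb several factors of $\overline{\lambda}, \amin^{-1}, \amax, \chi_1^{-1/2}$ and the sum-bound constant $C_\upepsilon$ without inflating $\widehat{C}_\bsbeta$; the duality step additionally requires verifying that the Galerkin orthogonality is only mildly spoiled by the eigenvalue factor, so that the extra terms are still $\calO(h^{1+t})$.
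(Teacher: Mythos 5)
Your proposal is correct and follows essentially the same route as the paper: induction on $|\bsnu|$ with base case from Theorem~\ref{thm:fe_err}, the eigenfunction bound \eqref{eq:du-fe} established first from the recursion \eqref{eq:du_fe_rec}, the eigenvalue bound \eqref{eq:dlam_fe} then obtained by inserting \eqref{eq:du-fe} into \eqref{eq:dlam_fe_rec_b} and collapsing the combinatorial sums via \eqref{eq:sum_bnd}, with $\Chat_\bsbeta$ chosen so the induction closes, and a Nitsche-type duality argument (as in \cite{GGKSS19}, applied with $w = \pdy^\bsnu(u-u_h)$ and the bound \eqref{eq:du-fe}) for \eqref{eq:dGu-fe}. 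The only superfluous step is your substitution of \eqref{eq:dlam_fe_rec_a} into \eqref{eq:du_fe_rec}: the eigenvalue-error terms appearing there are already of strictly lower order (that substitution was carried out once and for all inside the proof of Lemma~\ref{lem:du-fe_rec}), so the inductive hypothesis \eqref{eq:dlam_fe} bounds them directly.
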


\begin{proof}
Throughout we use the convention that $0! = 1$. 
Then, due to the error bound \eqref{eq:fe_lam} for the FE eigenvalue error, 
the base case of the induction 
($\bsnu = \bs0$) for the eigenvalue result \eqref{eq:dlam_fe} holds provided 
$C_1 \geq C_\lambda$.
Thus, let
\begin{equation}
\label{eq:C_dlam}
C_1 \,\coloneqq\, \max\big\{C_{\lambda}, 
\; \CII C_{u}^2 (2 + C_\upepsilon)(4 + C_\upepsilon) \big\}.
\end{equation}
Similarly, defining $C_2 = C_{u}$ the base case of the induction for \eqref{eq:du-fe} 
also clearly holds due to \eqref{eq:fe_u}.
 
For the inductive step, let $\bsnu$ be such that $|\bsnu| \geq 1$ and
assume that \eqref{eq:dlam_fe} and \eqref{eq:du-fe} hold for all $\bsm$ with $|\bsm| < |\bsnu|$.
Now, since the recursive bound for the eigenvalue \eqref{eq:dlam_fe_rec_b}
still depends on a term of order $\bsnu$, whereas the recursive bound for the eigenfunction
\eqref{eq:du_fe_rec} only depends on strictly lower order terms,
for our inductive step to work we first prove the result \eqref{eq:du-fe} 
for the eigenfunction, before proving the result \eqref{eq:dlam_fe} for the eigenvalue.

Substituting the induction assumptions \eqref{eq:dlam_fe} and \eqref{eq:du-fe} 
for $|\bsm| < |\bsnu|$ into \eqref{eq:du_fe_rec} gives
\begin{align*}
\nrm{\pdy^\bsnu (u - u_h)}{V} 
\,\leq\, &
\CIII \Bigg( |\bsnu|!^{1 + \upepsilon} \bsbetabar^\bsnu h
+
\sum_{j = 1}^\infty \nu_j \beta_j
C_2 (|\bsnu| - 1)!^{1 + \upepsilon} \bsbetahat^{\bsnu - \bse_j} h
\\
&+\,
\sum_{\substack{\bszero \neq \bsm \leq \bsnu \\ \bsm \neq \bsnu}}
\binom{\bsnu}{\bsm} 
|\bsm|!^{1 + \upepsilon} \bsbeta^{\bsm}
(C_2 + C_1 \hsuff)\ |\bsnu - \bsm|!^{1 + \upepsilon}\ 
\bsbetahat^{\bsnu - \bsm} h \Bigg)
\\
\leq\, &
 \bsbetahat^\bsnu h \,\CIII \Bigg(
\bigg[ \bigg(\frac{C_{\bsbeta}}{\Chat_\bsbeta}\bigg)^{|\bsnu|}
+ \frac{C_2 C_\bsbeta}{\Chat_\bsbeta}\bigg] 
|\bsnu|!^{1 + \upepsilon}
\\
&
+\,
\sum_{\substack{\bszero \neq \bsm \leq \bsnu \\ \bsm \neq \bsnu}}
\binom{\bsnu}{\bsm} |\bsnu - \bsm|!^{1 + \upepsilon} |\bsm|!^{1 + \upepsilon} 
(C_1\hsuff + C_2)\bigg(\frac{C_{\bsbeta}}{\Chat_\bsbeta}\bigg)^{|\bsm|}
\Bigg)
\end{align*}
where we have rescaled each product by using the definitions of $\bsbeta$, 
$\bsbetabar$ and $\bsbetahat$
in \eqref{eq:beta}, \eqref{eq:betabar} and \eqref{eq:betahat}.
The constants can be simplified by defining
\begin{equation}
\label{eq:C_hat}
\Chat_\bsbeta \,\coloneqq\, 
\frac{1}{C_2} C_\bsbeta \max(1, \CIII) \big[1 + C_2 + C_\upepsilon (C_1\hsuff + C_2)\big],
\end{equation}
which is independent of $\bsy$, $h$ and $\bsnu$.
This guarantees that $C_{\bsbeta}/ \Chat_\bsbeta \leq 1$, 
and thus since $|\bsm|, |\bsnu - \bsm| \geq 1$, we have the bound
\begin{align*}
\big\|\pdy^\bsnu &(u - u_h)\big\|_V
\\
&\leq\, 
 \bsbetahat^\bsnu h \,\CIII \frac{C_{\bsbeta}}{\Chat_\bsbeta} \Bigg(
(1 + C_2) |\bsnu|!^{1 + \upepsilon}
+\,
  (C_1\hsuff + C_2)
\sum_{\substack{\bszero \neq \bsm \leq \bsnu \\ \bsm \neq \bsnu}}
\binom{\bsnu}{\bsm} |\bsnu - \bsm|!^{1 + \upepsilon} |\bsm|!^{1 + \upepsilon} 
\Bigg)
\\
&\leq\,
\CIII \frac{C_{\bsbeta}}{\Chat_\bsbeta} \big[
1 + C_2 +  C_\upepsilon (C_1\hsuff + C_2) \big]\,
 |\bsnu|!^{1 + \upepsilon} \bsbetahat^\bsnu h
 \,\leq\, C_2 |\bsnu|!^{1 + \upepsilon} \bsbetahat^\bsnu h,
\end{align*}
where we have used \cite[Lemma~3.3]{GGKSS19} to bound the sum from above by
$C_\upepsilon |\bsnu|!^{1 + \upepsilon}$ (see also \eqref{eq:sum_bnd}), as well as
\eqref{eq:C_hat} to give the final result.

For the inductive step for the eigenvalue, we substitute the result \eqref{eq:du-fe},
which has just been shown to hold for all multi-indices of order up to and including $|\bsnu|$, 
into \eqref{eq:dlam_fe_rec_b} and then simplify, to give
\begin{align*}
|\pdy^\bsnu(\lambda - \lambda_h)| \,\leq\,
&\CII (C_2)^2 \bsbetahat^\bsnu h^2
\Bigg( \sum_{\bsm \leq\bsnu} \binom{\bsnu}{\bsm}
 |\bsnu - \bsm|!^{1 + \upepsilon} |\bsm|!^{1 + \upepsilon}
\\
&+ 
\sum_{j = 1}^\infty \nu_j \sum_{\bsm \leq \bsnu - \bse_j} 
\binom{\bsnu - \bse_j}{\bsm} |\bsnu - \bse_j - \bsm|!^{1 + \upepsilon} |\bsm|!^{1 + \upepsilon}
\\
&+ 
\sum_{\bsm \leq \bsnu} \binom{\bsnu}{\bsm} |\bsnu - \bsm|!^{1 + \upepsilon}
\sum_{\bsk \leq \bsm}
\binom{\bsm}{\bsk}  |\bsm - \bsk|!^{1 + \upepsilon} |\bsk|!^{1 + \upepsilon}\Bigg),
\end{align*}
where we have used the fact that $\beta_j \leq \betahat_j$. The sums can again 
be bounded using \eqref{eq:sum_bnd} (using it twice for the double sum on the last line),
 to give
\begin{align*}
|\pdy^\bsnu(\lambda - \lambda_h)| \,\leq\,
\CII (C_2)^2 \,\bsbetahat^\bsnu h^2\,(2 + C_\upepsilon)(4 + C_\upepsilon)\,
|\bsnu|!^{1 + \upepsilon},
\end{align*}
which, with $C_1$ as defined in \eqref{eq:C_dlam} and $C_2 = C_{u}$,
gives our desired result \eqref{eq:dlam_fe}.

The final result for the derivative of the error of the linear functional $\calG(u)$ \eqref{eq:dGu-fe}
follows by considering the same dual problem (A.16) as in \cite{GGKSS19}.
But instead, here we let $w = \pdy^\bsnu(u - u_h)$ and then use the upper bound
\eqref{eq:du-fe} in the last step.
\end{proof}

We can now simply substitute these bounds on the derivatives of the FE error into
\eqref{eq:err_ell_decomp}, in order to bound the FE component of the error.
For the second and third term in \eqref{eq:err_ell_decomp}, 
in the case of the eigenvalue, this gives
\begin{equation}
\label{eq:fe_W_lam}
\nrm{\lambda_s(\bsy) - \lambda_{h, s}(\bsy)}{\Ws}
\,\leq\, h^2 \Bigg(C_1^2 \sum_{\setu \subseteq \{1:s\}} 
\frac{|\setu|!^{2(1 + \upepsilon)}}{\gamma_\setu} \prod_{j \in \setu} \betahat_j^2\Bigg)^{1/2}.
\end{equation}
Similar results hold for $u(\bsy)$ and $\calG(u(\bsy))$.

To ensure that the constant on the RHS of \eqref{eq:fe_W_lam}, 
and the constants in the bounds that follow, are
independent of the dimension, for $\xi \in (\frac{1}{2}, 1]$ to be specified later,
we will choose the weights $\bsgamma$ by
\begin{equation}
\label{eq:gamma}
\gamma_j \,=\, \max\big( \betahat_j, \beta_j^{p/q}\big),
\qquad
\gamma_\setu \,=\, \Bigg( (|\setu| + 3)!^{2(1 + \upepsilon)} 
\prod_{j \in \setu} \frac{(2\pi^2)^\xi}{2\zeta(2\xi)}\gamma_j^2\Bigg)^{1/(1 + \xi)},
\end{equation}
where $p$, $q$ are the summability parameters from 
Assumption~A\ref{asm:coeff}.\ref{itm:summable}, so that 
$(\gamma_j)_{j \in \N} \in \ell^q(\R)$.

\subsection{Estimating the truncation error}
\label{sec:truncerr}
It remains to estimate the first term in \eqref{eq:err_ell_decomp} --- the truncation error.
\begin{theorem}
\label{thm:trunc-err}
Suppose that Assumption~A\ref{asm:coeff} holds and let $s, \widetilde{s} \in \N$ with $s > \widetilde{s}$.
Additionally, suppose that 
the weights $\bsgamma$ are given by \eqref{eq:gamma},
then
\begin{equation}
\label{eq:trunc_W_lam}
\nrm{\lambda_s - \lambda_{\widetilde{s}}}{\Ws} \,\lesssim\,
\widetilde{s}^{\,-1/p + 1/q} 
\Bigg( \sum_{\setu \subseteq \{1:\widetilde{s}\}}
\frac{(|\setu| + 3)!^{2(1 + \upepsilon)}}{\gamma_\setu} \prod_{j \in \setu} \beta_j^2
\Bigg)^{1/2},
\end{equation}
with the constant independent of $\widetilde{s}$ and $s$.
\end{theorem}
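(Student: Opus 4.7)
The plan is to expand the $\Ws$-norm squared via \eqref{eq:W-norm} and split the outer sum over $\setu \subseteq \{1:s\}$ into two regimes: Case A, where $\setu \subseteq \{1:\tilde s\}$, and Case B, where $\setu \cap \{\tilde s+1,\ldots, s\} \neq \emptyset$. The key observation that drives this split is that $\lambda_{\tilde s}$ does not depend on $y_j$ for $j > \tilde s$, so $\pdy^\setu \lambda_{\tilde s} \equiv 0$ throughout Case B.

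For Case A, I would use the identity $\lambda_{\tilde s}(\bsy) = \lambda_s(y_1,\ldots,y_{\tilde s}, 0, \ldots, 0)$ and apply the fundamental theorem of calculus along the line segment from $(y_1, \ldots, y_{\tilde s}, 0, \ldots, 0)$ to $\bsy$ to write
\[
\pdy^\setu(\lambda_s - \lambda_{\tilde s})(\bsy) \,=\, \sum_{j=\tilde s+1}^s y_j \int_0^1 \pdy^{\setu+\bse_j}\lambda_s(y_1,\ldots,y_{\tilde s}, ty_{\tilde s+1},\ldots, ty_s)\,dt.
\]
Bounding each $|\pdy^{\setu+\bse_j}\lambda_s|$ by the regularity estimate \eqref{eq:dlambda} from Theorem~\ref{thm:regularity_y}, using $|y_j|\leq 1/2$, and then integrating, squaring and summing over $\setu \subseteq \{1:\tilde s\}$ with weight $1/\gamma_\setu$ produces the target sum (up to the harmless replacement of $(|\setu|+1)!$ by $(|\setu|+3)!$) multiplied by $\bigl(\sum_{j>\tilde s}\beta_j\bigr)^2$. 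A standard rearrangement argument using $(\beta_j)\in\ell^p$ from Assumption~A\ref{asm:coeff}.\ref{itm:summable} yields $\sum_{j>\tilde s}\beta_j \lesssim \tilde s^{1-1/p}\leq \tilde s^{1/q - 1/p}$ (since $q \leq 1$), matching the target rate.

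For Case B, I would decompose $\setu = \setu_1 \sqcup \setu_2$ with $\setu_1 \subseteq \{1:\tilde s\}$ and $\emptyset\neq \setu_2 \subseteq \{\tilde s+1,\ldots,s\}$, and apply the pointwise bound \eqref{eq:dlambda} directly to $\pdy^\setu \lambda_s$. The resulting double sum factors into an outer sum over $\setu_1$ (matching the target form) and an inner tail sum over $\setu_2$. Using the explicit POD form of $\gamma_{\setu}$ from \eqref{eq:gamma}, one can rewrite $\gamma_{\setu_1\cup\setu_2}^{-1}$ as $\gamma_{\setu_1}^{-1}$ times a product over $\setu_2$ and a ratio of factorials, and then exploit $\beta_j \leq \gamma_j^{q/p}$ (which follows from $\gamma_j \geq \beta_j^{p/q}$) together with the $\ell^q$-summability of $(\gamma_j)$ to bound the tail sum by $\tilde s^{-2/p + 2/q}$ times the corresponding $\setu_1$-summand in the target.

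The main obstacle is the Case B tail sum: the factorial $(|\setu_1|+|\setu_2|)!^{2(1+\upepsilon)}$ must be offset by the factorial growth inside $\gamma_{\setu_1\cup\setu_2}$, and the remaining product over $\setu_2$ must be summed uniformly in $\setu_1$ and in $s$ while still delivering the tail rate $\tilde s^{-1/p+1/q}$. This is the step that genuinely exploits the POD structure of the weights \eqref{eq:gamma} and the choice of $\xi \in (\tfrac{1}{2}, 1]$. Once Case B is controlled, combining the two cases via the triangle inequality and taking square roots yields the claimed bound.
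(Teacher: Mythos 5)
Your proposal is correct and follows essentially the same route as the paper: the Taylor/fundamental-theorem expansion in the truncated variables with the derivative bound \eqref{eq:dlambda} for $\setu \subseteq \{1:\widetilde{s}\}$, the direct derivative bound for sets meeting $\{\widetilde{s}+1:s\}$, the Stechkin-type tail bound $\sum_{j>\widetilde{s}}\beta_j \lesssim \widetilde{s}^{\,1-1/p}$, and the weight manipulation exploiting the POD form \eqref{eq:gamma}. The step you flag as the main obstacle is exactly the bound the paper imports from the proof of Theorem~11 in \cite{KSS15} (cf.\ \eqref{eq:trunc_weights}), so your sketch matches the published argument.
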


\begin{proof}
Since $\lambda_s$ is analytic we can expand it as a Taylor series
about $\bszero$ in the variables $\{y_{\widetilde{s} + 1}, \ldots, y_s\}$:
\[
\lambda_s(\bsy_s) \,=\,
\lambda_s(\bsy_{\widetilde{s}}; \bszero) + \sum_{i = \widetilde{s} + 1}^s y_i
\int_0^1 \pd{}{}{y_i} \lambda_s(\bsy_{\widetilde{s}}; t\bsy_{\{\widetilde{s} + 1:s\}}) \, \rd t,
\]
where we use the notation $\bsy_s = (y_1, y_2, \ldots, y_s)$, 
$(\bsy_{\widetilde{s}}; \bszero) = (y_1, y_2, \ldots, y_{\widetilde{s}}, 0, \ldots 0)$
and $(\bsy_{\widetilde{s}}; t\bsy_{\{\widetilde{s} + 1:s\}}) = (y_1, y_2, \ldots, y_{\widetilde{s}}, 
t y_{\widetilde{s} + 1}, t y_{\widetilde{s} + 2}, \ldots, t y_s)$.

Since $\lambda_{\widetilde{s}}(\bsy_{\widetilde{s}}) = \lambda_s(\bsy_{\widetilde{s}}; \bszero)$
(this is simply different notation for the same object),
 this can be rearranged to give
\begin{equation}
\label{eq:truncerr_exact}
\lambda_s(\bsy_s) - \lambda_{\widetilde{s}}(\bsy_{\widetilde{s}}) \,=\, 
\sum_{i = \widetilde{s} + 1}^s y_i
\int_0^1 \pd{}{}{y_i}(\bsy_{\widetilde{s}}; t\bsy_{\{\widetilde{s} + 1:s\}}) \, \rd t.
\end{equation}

Let $\setu \subseteq \{1, 2\ldots, \widetilde{s}\}$, then differentiating
\eqref{eq:truncerr_exact} with respect to $\bsy_\setu$ gives
\[
\pd{|\setu|}{}{\bsy_\setu}\big(\lambda_s(\bsy_s) - \lambda_{\widetilde{s}}(\bsy_{\widetilde{s}})\big) 
\,=\, 
\sum_{i = \widetilde{s} + 1}^s y_i
\int_0^1 \pd{|\setu| + 1}{}{\bsy_{\setu \cup \{i\}}} \lambda_s(\bsy_{\widetilde{s}}; t\bsy_{\{\widetilde{s} + 1:s\}}) \, \rd t.
\]
Taking the absolute value, using the triangle inequality and the fact that $|y_j| \leq 1/2$,
we have the upper bound
\[
\bigg|\pd{|\setu|}{}{\bsy_\setu}\big(\lambda_s(\bsy_s) - \lambda_{\widetilde{s}}(\bsy_{\widetilde{s}})\big)\bigg|
\,\leq\, 
\frac{1}{2}\sum_{i = \widetilde{s} + 1}^s
\int_0^1 \bigg|\pd{|\setu| + 1}{}{\bsy_{\setu \cup \{i\}}} \lambda_s(\bsy_{\widetilde{s}}; t\bsy_{\{\widetilde{s} + 1:s\}})\bigg| 
\, \rd t.
\]
Now, substituting in the upper bound on the derivative of $\lambda_s$
from \cite[Lemma~3.4, equation (3.6)]{GGKSS19} gives
\begin{align}
\label{eq:truncbnd-1}
\nonumber
\bigg|\pd{|\setu|}{}{\bsy_\setu}\big(\lambda_s(\bsy_s) - \lambda_{\widetilde{s}}(\bsy_{\widetilde{s}})\big)\bigg|
\,&\leq\, 
\frac{\lambdabar}{2} \sum_{i = \widetilde{s} + 1}^s (|\setu| + 1)!^{1 + \upepsilon} \beta_i
\prod_{j \in \setu} \beta_j\\
\,&=\, \frac{\lambdabar}{2} \Bigg(\sum_{i = \widetilde{s} + 1}^s  \beta_i \Bigg)
(|\setu| + 1)!^{1 + \upepsilon} \prod_{j \in \setu} \beta_j,
\end{align}
with $\beta_j$ as in \eqref{eq:beta}.

Letting $\setu \subseteq \{1, 2, \ldots, s\} $ with $\setu \cap \{\widetilde{s} + 1, \widetilde{s} + 2, \ldots, s\} \neq \emptyset$,
the derivative $\lambda_s - \lambda_{\widetilde{s}}$ is simply
\begin{equation}
\label{eq:truncbnd-2}
\bigg|\pd{|\setu|}{}{\bsy_\setu}\big(\lambda_s(\bsy_s) - \lambda_{\widetilde{s}}(\bsy_{\widetilde{s}})\big)\bigg|
\,=\, \bigg|\pd{|\setu|}{}{\bsy_\setu} \lambda_s(\bsy_s) \bigg|
\,\leq\, \frac{\lambdabar}{2} |\setu|!^{1 + \upepsilon} \prod_{j \in \setu} \beta_j,
\end{equation}
where we have again used the upper bound \cite[equation (3.6)]{GGKSS19}.

We now bound the norm \eqref{eq:W-norm} of $\lambda_s - \lambda_{\widetilde{s}}$ in $\Ws$. 
Splitting the sum over $\setu \subseteq \{1, 2, \ldots\}$ by whether $\setu$
contains any of $\{\widetilde{s} + 1, \widetilde{s} + 2, \ldots, s\}$, we can write 
\begin{align*}
\|\lambda_s - &\lambda_{\widetilde{s}}\|_{\Ws}^2
= \sum_{\setu \subseteq \{1:\widetilde{s}\}} \frac{1}{\gamma_\setu} \int_{[-\frac{1}{2}, \frac{1}{2}]^{|\setu|}} 
\bigg(\int_{[-\frac{1}{2}, \frac{1}{2}]^{s - |\setu|}} \pd{|\setu|}{}{\bsy_\setu} 
\big[\lambda_s(\bsy_s) - \lambda_{\widetilde{s}}(\bsy_{\widetilde{s}})\big] \, \rd \bsy_{-\setu}\bigg)^2
\rd \bsy_\setu\\
&+ \sum_{\substack{\setu \subseteq \{1:s\}\\ \setu \cap \{\widetilde{s} + 1:s\} \neq \emptyset}}
\frac{1}{\gamma_\setu} \int_{[-\frac{1}{2}, \frac{1}{2}]^{|\setu|}} 
\bigg(\int_{[-\frac{1}{2}, \frac{1}{2}]^{s - |\setu|}} \pd{|\setu|}{}{\bsy_\setu} 
\big[\lambda_s(\bsy_s) - \lambda_{\widetilde{s}}(\bsy_{\widetilde{s}})\big] \, \rd \bsy_{-\setu}\bigg)^2
\, \rd \bsy_\setu.
\end{align*}
Substituting in the bounds \eqref{eq:truncbnd-1} and \eqref{eq:truncbnd-2} then yields
\begin{align}
\label{eq:truncerr_sum}
\nonumber
\nrm{\lambda_s - \lambda_{\widetilde{s}}}{\Ws} 
\,\leq\,&
\Bigg[\Bigg(\sum_{i = \widetilde{s} + 1}^s \beta_i\Bigg)^2
\sum_{\setu \subseteq \{1:\widetilde{s}\}} \frac{(|\setu| + 1)!^{2(1 + \upepsilon)}}{\gamma_\setu}
\prod_{j \in \setu} \beta_j^2
\\
&\quad+ \sum_{\substack{\setu \subseteq \{1:s\}\\\setu \cap \{\widetilde{s} + 1:s\} \neq \emptyset}}
\frac{|\setu|!^{2(1 + \upepsilon)}}{\gamma_\setu} \prod_{j \in \setu} \beta_j^2\Bigg]^{1/2}.
\end{align}

Next, we can bound the sum over $i$ in
\eqref{eq:truncerr_sum} by the whole tail of the sum, which can then be bounded using 
\cite[eq. (4.7)]{GGKSS19}, to give
\begin{equation}
\label{eq:tail_sum}
\sum_{i = \widetilde{s} + 1}^s \beta_i 
\,\leq\,
\sum_{i = \widetilde{s} + 1}^\infty \beta_i
\,\leq\, \min \bigg( \frac{p}{1 - p}, 1\bigg) \nrm{\bsbeta}{\ell^p} \widetilde{s}^{\,- (1/p - 1)}.
\end{equation}
Then for weights given by \eqref{eq:gamma}, following the proof of \cite[Theorem 11]{KSS15}
we can bound 
\begin{equation}
\label{eq:trunc_weights}
\sum_{\substack{\setu \subseteq \{1:s\}\\\setu \cap \{\widetilde{s} + 1:s\} \neq \emptyset}}
\frac{|\setu|!^{2(1 + \upepsilon)}}{\gamma_\setu} \prod_{j \in \setu} \beta_j^2
\,\lesssim\,
\widetilde{s}^{-2(1/p - 1/q)} \sum_{\setu \subseteq \{1:\widetilde{s}\}}
\frac{(|\setu| + 3)!^{2(1 + \upepsilon)}}{\gamma_\setu} \prod_{j \in \setu} \beta_j^2,
\end{equation}
with a constant that is independent of $\widetilde{s}$ and $s$.

Since $1/q > 1$, after substituting the bounds \eqref{eq:tail_sum} and
\eqref{eq:trunc_weights} into \eqref{eq:truncerr_sum} we obtain the final result
 \eqref{eq:trunc_W_lam}.
\end{proof}

\subsection{Final error bound}
In the previous two sections we have successfully bounded the FE and truncation
error in the $\Ws$ norm, now these bounds can simply be substituted into 
\eqref{eq:err_ell_decomp} to bound the variance on each level.

\begin{theorem}
\label{thm:Var_ell}
Let $L \in \N$, let $1 = h_{-1} > h_0 >  h_1 > \cdots > h_L > 0$ with $h_0$ sufficiently small,
let $1 = s_{-1} = s_0 \leq s_1 \cdots \leq \cdots s_L$,
and suppose that Assumption~A\ref{asm:coeff} holds with $p < q$.
Also, let each $Q_\ell$ be a lattice rule using $N_\ell = 2^{n_\ell}$, $n_\ell \in \N$, points
corresponding to a CBC-constructed generating vector
with weights $\bsgamma$ given by \eqref{eq:gamma}.
Then, for all $\ell = 0, 1, 2, \ldots L$,
\begin{equation}
\label{eq:Var_ell_lam}
\bbV_\bsDelta[Q_\ell(\lambda_\ell - \lambda_{\ell - 1})] \,\leq\, C_1 N_\ell^{-\eta} 
\big(h_{\ell - 1}^4  +  s_{\ell - 1}^{-2(1/p - 1/q)}\big),
\end{equation}
and, for $\calG \in H^{-1 + t}(D)$ with $0 \leq t \leq 1$,
\begin{equation}
\label{eq:Var_ell_G}
\bbV_\bsDelta[Q_\ell(\calG(u_\ell) - \calG(u_{\ell - 1}))] \,\leq\, C_2 N_\ell^{-\eta} 
\big(h_{\ell - 1}^{2(1 + t)}  + s_{\ell - 1}^{-2(1/p - 1/q)}\big),
\end{equation}
where, for $0 < \delta < 1$,
\begin{equation*}
\eta \,=\, \begin{cases}
2 - \delta & \text{if } q \in (0, \tfrac{2}{3}],\\[2mm]
\displaystyle{\frac{2}{q} - 1} & \text{if } q \in (\tfrac{2}{3}, 1).
\end{cases}
\end{equation*}
The second term in \eqref{eq:Var_ell_lam} and \eqref{eq:Var_ell_G} can be dropped
if $s_\ell = s_L$, for $\ell = 1, 2, \ldots L$.
\end{theorem}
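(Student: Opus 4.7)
The plan is to assemble the variance bound from three ingredients already established: the CBC error estimate \eqref{eq:cbc_err}, the FE error bound in the weighted Sobolev norm \eqref{eq:fe_W_lam} (and its analogue for $\calG(u)$ obtained from \eqref{eq:dGu-fe}), and the truncation error bound \eqref{eq:trunc_W_lam}. First I would apply the triangle-inequality decomposition \eqref{eq:err_ell_decomp} to $\|\lambda_\ell - \lambda_{\ell-1}\|_{\calW_{s_\ell, \bsgamma}}$, bound the two FE error contributions by \eqref{eq:fe_W_lam} at levels $\ell$ and $\ell-1$, and use $h_\ell \le h_{\ell-1}$ to merge them into a single $h_{\ell-1}^2$ term. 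The truncation term $\|\lambda_{s_\ell} - \lambda_{s_{\ell-1}}\|_{\calW_{s_\ell, \bsgamma}}$ is estimated by Theorem~\ref{thm:trunc-err} with $\widetilde s = s_{\ell-1}$, yielding a factor $s_{\ell-1}^{-(1/p - 1/q)}$.

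Next I would square the resulting bound and substitute it into \eqref{eq:cbc_err} to obtain
\begin{equation*}
\bbV_\bsDelta[Q_\ell(\lambda_\ell - \lambda_{\ell-1})]
\,\le\, \frac{\bigl(A_\xi(s_\ell)\bigr)^{1/\xi}}{\varphi(N_\ell)^{1/\xi}}\,
\nrm{\lambda_\ell - \lambda_{\ell-1}}{\calW_{s_\ell, \bsgamma}}^2,
\quad
A_\xi(s) \,\coloneqq\, \sum_{\emptyset \ne \setu \subseteq \{1:s\}} \gamma_\setu^\xi
\Bigl(\tfrac{2\zeta(2\xi)}{(2\pi^2)^\xi}\Bigr)^{|\setu|}.
\end{equation*}
Three dimension-dependent sums now appear: $A_\xi(s_\ell)$ from CBC, and the two sums
$\sum_\setu |\setu|!^{2(1+\upepsilon)} \gamma_\setu^{-1} \prod_{j\in\setu} \widehat\beta_j^2$ from the FE bound and $\sum_\setu (|\setu|+3)!^{2(1+\upepsilon)} \gamma_\setu^{-1} \prod_{j\in\setu} \beta_j^2$ from the truncation bound. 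With the weights \eqref{eq:gamma}, unfolding the $1/(1+\xi)$ exponent turns all three sums into products of the form $\prod_j\bigl(1 + C \gamma_j^{2\xi/(1+\xi)}\bigr)$ (after a standard combinatorial estimate of the factorials), which are bounded uniformly in $s_\ell$ whenever $(\gamma_j^{2\xi/(1+\xi)})_j \in \ell^1$. Since $\gamma_j \lesssim \max(\widehat\beta_j,\beta_j^{p/q})$ with both sequences in $\ell^q$, this requires $2\xi/(1+\xi) \ge q$.

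The choice of $\xi$ then splits into two regimes and is the heart of the argument. For $q \in (0, 2/3]$ the summability constraint allows any $\xi > 1/2$, so I would pick $\xi = 1/(2-\delta)$, yielding $\eta = 1/\xi = 2 - \delta$. For $q \in (2/3, 1)$ the binding constraint is $\xi = q/(2-q) > 1/2$, so $\eta = 1/\xi = 2/q - 1$. In both cases $\varphi(N_\ell) = N_\ell/2$ (since $N_\ell$ is a power of $2$) absorbs into the constant, giving the asserted $N_\ell^{-\eta}$ decay.

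The functional case \eqref{eq:Var_ell_G} follows by the same three-step assembly, replacing the eigenvalue FE bound \eqref{eq:fe_W_lam} by the $\calG(u)$ version obtained from \eqref{eq:dGu-fe} (giving $h^{1+t}$ instead of $h^2$), and replacing Theorem~\ref{thm:trunc-err} by its eigenfunction-functional analogue, which goes through verbatim using the $\pdy^\bsnu \calG(u)$ bound that follows from \eqref{eq:du_V} via duality. Finally, if $s_\ell \equiv s_L$ for $\ell \ge 1$, then $\lambda_{s_\ell} = \lambda_{s_{\ell-1}}$ in \eqref{eq:err_ell_decomp} and the truncation contribution drops out. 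The main technical obstacle is the uniform-in-$s_\ell$ control of the three weighted sums under the prescribed weight choice; this is largely a bookkeeping exercise following the template of \cite{KSS12,KSS15}, but care is needed because the presence of $(|\setu|+3)!^{2(1+\upepsilon)}$ in \eqref{eq:gamma} (rather than just $|\setu|!^{1+\upepsilon}$ as in the source-problem analysis) is what allows the three sums to be handled simultaneously.
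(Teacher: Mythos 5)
Your outline follows the paper's proof almost step for step: the same decomposition \eqref{eq:err_ell_decomp}, the same substitution of \eqref{eq:fe_W_lam} and Theorem~\ref{thm:trunc-err} into the CBC bound \eqref{eq:Var_ell}, and exactly the same two-regime choice of $\xi$ (your threshold $2\xi/(1+\xi)\ge q$ is algebraically equivalent to the paper's condition $\xi \ge q/(2-q)$, i.e.\ to $(4\xi-q-3q\xi)/(1-\xi)\ge q$). Two points, however, are genuine gaps as written. First, the case $\ell=0$ is part of the claim (with $h_{-1}=s_{-1}=1$) but is not covered by your argument: there is no difference $\lambda_{s_0}-\lambda_{s_{-1}}$ or pair of FE errors to decompose, since $\lambda_{-1}=0$; the paper instead bounds $\nrm{\lambda_0}{\calW_{s_0,\bsgamma}}$ directly from the derivative bound \eqref{eq:dlambda} and observes that the same constant works because $\beta_j\le\betahat_j$.

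Second, the assertion that the three weighted sums ``turn into products of the form $\prod_j\bigl(1+C\gamma_j^{2\xi/(1+\xi)}\bigr)$ after a standard combinatorial estimate of the factorials'' is false as stated: the factors $(|\setu|+3)!^{2(1+\upepsilon)}$ are order-dependent, so these sums do not factorise over $j$, and for $\xi$ near $1$ the factorial exponent appearing after inserting the weights, $2\xi(1+\upepsilon)/(1+\xi)$, can exceed $1$, in which case the sum diverges no matter how small the $\gamma_j$ are. This is precisely where the paper does real work: it couples $\upepsilon$ to $\xi$ (taking $\upepsilon=(1-\xi)/(4\xi)$, so the factorial exponent becomes $(1+3\xi)/(2(1+\xi))<1$), introduces the auxiliary sequence $\alpha_j\propto\gamma_j^q$ with $\sum_j\alpha_j<1$, and applies H\"older with exponents $2(1+\xi)/(1+3\xi)$ and $2(1+\xi)/(1-\xi)$ together with \cite[Lemma~6.3]{KSS12} to get a bound independent of $s_\ell$; the summability requirement $\sum_j\gamma_j^{(4\xi-q-3q\xi)/(1-\xi)}<\infty$ is what produces the constraint you identified. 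So your deferral to the KSS template points at the right fix, but the proof is incomplete without specifying how $\upepsilon$ is chosen in terms of $\xi$ and without the H\"older/auxiliary-sequence step replacing the product-form claim.
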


\begin{proof}
We prove the result for the eigenvalue, since the eigenfunction result
follows analogously.
For $\ell \geq 1$, substituting the bounds \eqref{eq:fe_W_lam} and \eqref{eq:trunc_W_lam}
into \eqref{eq:err_ell_decomp} gives
\[
\nrm{\lambda_\ell - \lambda_{\ell - 1}}{\Ws}
\,\leq\, 
 C \Bigg( \sum_{\setu \subseteq \{1:s_{\ell}\}}
\frac{(|\setu| + 3)!^{2(1 + \upepsilon)}}{\gamma_\setu} 
\prod_{j \in \setu} \betahat_j^2\Bigg)^{1/2}
\big( h_{\ell - 1}^2 + s_{\ell - 1}^{-(1/p - 1/q)}\big) ,
\]
where we have simplified by using that $h_{\ell} < h_{\ell - 1}$, $s_{\ell - 1} < s_\ell$, 
$\beta_j \leq \betahat_j$, and also merged all constants into a generic constant $C$,
which may depend on $\upepsilon$.

Substituting the bound above into \eqref{eq:Var_ell},
then using that $N_\ell = 2^{n_\ell}$ and thus $\varphi(N_\ell) = N_\ell/2$,
the variance on level $\ell$ can be bounded by
\begin{equation}
\label{eq:Var_ell_a}
\bbV_\bsDelta[Q_\ell(\lambda_\ell - \lambda_{\ell - 1})] 
\,\leq\, 
C_{\ell, \bsgamma, \xi}\,
N_\ell^{-1/\xi} \big(s_{\ell - 1}^{-2(1/p - 1/q)} + h_{\ell - 1}^4\big).
\end{equation}
The constant is given by 
\[
C_{\ell, \bsgamma, \xi} \,\coloneqq\,
C^2 2^{1/\xi}\Bigg( \sum_{\setu \subseteq \{1:s_\ell\}}
\frac{(|\setu| + 3)!^{2(1 + \upepsilon)}}{\gamma_\setu} 
\prod_{j \in \setu} \betahat_j^2\Bigg)
 \Bigg(\sum_{\emptyset \neq \setu \subseteq \{1:s_\ell\}}
\gamma_\setu^\xi \left( \frac{2\zeta(2\xi)}{(2\pi^2)^\xi}\right)^{|\setu|}
\Bigg)^{1/\xi}.
\]

For $\ell = 0$ we can similarly substitute \eqref{eq:dlambda} into the CBC bound \eqref{eq:cbc_err},
and then since $h_{-1} = s_{-1} = 1$ and $\beta_j \leq \betahat_j$ 
it follows that \eqref{eq:Var_ell_a} also holds for $\ell = 0$ with the 
constant $C_{0, \bsgamma, \xi}$ as above.

All that remains to be shown is that this constant can be bounded independently of 
$s_{\ell - 1}$ and $s_\ell$.
To this end, substituting the formula \eqref{eq:gamma} for $\gamma_\setu$ and using the fact that $\betahat_j \leq \gamma_j$ then simplifying, we can bound 
$C_{\ell, \bsgamma, \xi}$ above by
\[
C_{\ell, \bsgamma, \xi} 
\,\leq\, C \Bigg(
\underbrace{\sum_{|\setu| < \infty} (|\setu| + 3)!^\frac{2\xi(1 + \upepsilon)}{1 + \xi}
\prod_{j \in \setu} \gamma_j^\frac{2\xi}{1 + \xi} 
\bigg(\frac{2\zeta(2\xi)}{(2\pi^2)^\xi} \bigg)^\frac{1}{1 + \xi}
}_{S_{\xi}}\Bigg)^\frac{1 + \xi}{\xi},
\]
where again $C$ is a generic constant, which may depend on $\upepsilon$.

We now choose the exponents $\xi$ and $\upepsilon$ so that the sum $S_\xi$ is finite. 
For $0 < \delta < 1$, let
\begin{equation}
\label{eq:xi}
\xi \,=\,
\begin{cases}
{\displaystyle \frac{1}{2 - \delta}} & \text{if } q \in (0, \frac{2}{3}],\\[2mm]
{\displaystyle \frac{q}{2 - q} } & \text{if } q \in (\frac{2}{3}, 1),
\end{cases}
\quad \text{and} \quad
\upepsilon \,=\, \frac{1 - \xi}{4\xi} \,>\, 0.
\end{equation}
With this choice of $\xi$ we have
$(4\xi - q - 3q\xi)/(1 - \xi) \geq q$ for any $q \in (0, 1)$, and so
\begin{equation}
\label{eq:sum_gamma}
\sum_{j = 1}^\infty \gamma_j^\frac{4\xi - q - 3q\xi}{1 - \xi} \,<\, \infty.
\end{equation}
Then define the sequence
\begin{equation}
\label{eq:alpha}
\alpha_j \,=\, \Bigg( 1 + \sum_{i = 1}^\infty \gamma_i^q\Bigg)^{-1} \gamma_i^q,
\quad \text{so that} \quad
\sum_{j = 1}^\infty \alpha_j \,<\, 1.
\end{equation}

Substituting in our choice \eqref{eq:xi} for $\upepsilon$, and multiplying and dividing
each term by the product of $\alpha_j^{(1 + 3\xi)/(2(1 + \xi))}$, we can write 
\[
S_{\xi} \,=\, \sum_{|\setu| < \infty} (|\setu| + 3)!^\frac{1 + 3\xi}{2(1 + \xi)}
\Bigg(\prod_{j \in \setu} \alpha_j^\frac{1 + 3\xi}{2(1 + \xi)}\Bigg)
\Bigg(\prod_{j \in \setu} 
\gamma_j^\frac{2\xi}{1 + \xi}\alpha_j^{-\frac{1 + 3\xi}{2(1 + \xi)}}
\bigg(\frac{2\zeta(2\xi)}{(2\pi^2)^\xi} \bigg)^\frac{1}{1 + \xi}
\Bigg).
\]
Applying H\"older's inequality with exponents 
$2(1 + \xi)/(1 + 3\xi) > 1$ and $2(1 + \xi)/(1 - \xi) > 1$ gives
\begin{align*}
S_\xi \,\leq\, &\Bigg( \sum_{|\setu| < \infty}
(|\setu| + 3)! \prod_{j \in \setu} \alpha_j\Bigg)^\frac{1 + 3\xi}{2(1 + \xi)}
\Bigg( \sum_{|\setu| < \infty} \prod_{j \in \setu} \gamma_j^\frac{4\xi}{1 - \xi}
\alpha_j^{-\frac{1 + 3\xi}{1 - \xi}}
\bigg(\frac{2\zeta(2\xi)}{(2\pi^2)^\xi} \bigg)^\frac{2}{1 - \xi}
\Bigg)^\frac{2 - \xi}{2(1 + \xi)}
\\
\leq\,&
\Bigg[ 6 \bigg(1 - \sum_{j = 1}^\infty \alpha _j\bigg)^{-4}\Bigg]^\frac{1 + 3\xi}{2(1 + \xi)}
\\
&\cdot\exp \Bigg[ \frac{2 - \xi}{2(1 + \xi)} 
\bigg(\frac{2\zeta(2\xi)}{(2\pi^2)^\xi} \bigg)^\frac{2}{1 - \xi}
\bigg(1 + \sum_{j = 1}^\infty \gamma_j^q\bigg)^\frac{1 + 3\xi}{1 - \xi}
\sum_{j = 1}^\infty \gamma_j^\frac{4\xi - q - 3q\xi}{1 - \xi} \Bigg],
\end{align*}
where we have used \cite[Lemma~6.3]{KSS12}. From \eqref{eq:sum_gamma}
and \eqref{eq:alpha} it follows that $S_\xi < \infty$, and so
$C_{\ell, \bsgamma, \xi}$ can be bounded independently of $s_\ell$.
Finally, letting $\eta = 1/\xi$ for $\xi$ as in \eqref{eq:xi} gives the desired result
with a constant independent of $s_\ell$.
\end{proof}

\begin{remark}\label{rem:err-params}
Hence, we have verified that Assumptions M2 from 
Theorems~\ref{thm:ML_abstract_lam} and \ref{thm:ML_abstract_G} hold with
$\beta_\lambda = 2\alpha_\lambda = 4$, $\beta_\calG = 2\alpha_\calG = 2(1 + t)$, 
$\beta' = 1/p + 1/q$, and $\eta$ as given above.
\end{remark}

The upper bounds in Theorem~\ref{thm:du-fe}, \eqref{eq:fe_W_lam} and 
Theorem~\ref{thm:trunc-err} are the same as the corresponding bounds from
the MLQMC analysis for the source problem (see \cite[Theorems 7, 8, 11]{KSS15}),
the only differences are in the values of the constants and in the extra $1 + \upepsilon$
factor in the exponent of $|\setu|!$. As such the final variance bounds in 
Theorem~\ref{thm:Var_ell} also coincide with the bounds for the source problem from
\cite{KSS15} for all $q < 1$. The only difference is that our result does not hold for $q = 1$,
whereas the results for the source problem do.

\subsection{Extension to higher-order QMC}
\label{sec:hoqmc}
As mentioned earlier, the bounds on the higher-order derivatives
that we proved in Section~\ref{sec:reg} imply
higher order methods can also be used for the quadrature component of
our ML algorithm, which will provide a faster convergence rate in $N_\ell$.
We now provide a brief discussion
of how to extend our ML algorithm, and the error analysis, to 
\emph{higher-order QMC (HOQMC)} rules.
From an algorithm point of view, one can simply use HOQMC points instead of lattice rules
for the quadrature rules $Q_\ell$ in \eqref{eq:mlqmc0}.
We denote this ML-HOQMC approximation by $Q^\mathrm{MLHO}_L$.
To extend the error analysis to HOQMC we can again use a general framework
as in Theorems~\ref{thm:ML_abstract_lam} and \ref{thm:ML_abstract_G}.
We stress that the difficult part is to  verify the assumptions, 
and in particular to show the required mixed higher-order derivative bounds that we 
have already proved in Theorem~\ref{thm:regularity_y}.
The remainder of the analysis then follows the same steps as in the previous sections 
with only slight modifications to handle the higher-order norm as in \cite{DKLeGS16},
where ML-HOQMC methods were applied to PDE source problems.
As such, we don't present the full details here but only an outline.

A HOQMC rule is an equal-weight quadrature rule of the form \eqref{eq:rqmc}
that can achieve faster than $1/N$ convergence for sufficiently smooth integrands.
A popular class of deterministic HOQMC rules are \emph{interlaced polynomial lattice rules},
see \cite{Dick08,GodaDick15} and \cite{DKLeGNS14,DKLeGS16} for their application
to PDE source problems. 
Loosely speaking, a polynomial lattice rule is a QMC rule similar to a lattice rule,
except the points are generated by a vector of polynomials instead
of integers, the number of points $N$ is a prime power and the points
are not randomly shifted. Higher order convergence in $s$ dimensions
is then achieved by taking a polynomial lattice rule in a higher dimension, 
$\nu \cdot s$ for $\nu \in \N$,
and cleverly interlacing the digits across the dimensions of each $(\nu s)$-dimensional point to
produce an $s$-dimensional point. The factor $\nu \in \N$ is
called the \emph{interlacing order} and it determines the convergence rate.
Good interlaced polynomial lattice rules can also be constructed by a CBC algorithm.
See \cite{GodaDick15} for the full details.

Following \cite{DKLeGS16}, for $\nu \in \N$ and $1 \leq r \leq \infty$ 
we introduce the Banach space $\calW^{\nu, r}_{s, \bsgamma}$, 
which is a higher-order analogue of the first-order space $\calW_{s, \bsgamma}$, with the norm
\begin{align}
\label{eq:W^nu}
\|f\|_{\calW^{\nu, r}_{s, \bsgamma}} 
\,=\, &\max_{\setu \subseteq \{1:s\}} \frac{1}{\gamma_\setu}
\Bigg(\sum_{\setv \subseteq \setu} 
\sum_{\bstau_{\setu \setminus \setv} \in \{1:\nu\}^{|\setu \setminus \setv|}}
\nonumber\\
&\int_{[-\frac{1}{2}, \frac{1}{2}]^{|\setv|}}
\bigg| \int_{[-\frac{1}{2}, \frac{1}{2}]^{s - |\setv|}}
\partial^{(\bsnu_\setv, \bstau_{\setu \setminus \setv}, \bszero)}_\bsy f(\bsy)
\, \rd \bsy_{-\setv} \bigg|^r\rd \bsy_\setv\Bigg)^{1/r}.
\end{align}
Here $(\bsnu_\setv, \bstau_{\setu \setminus \setv}, \bszero) \in \calF$ is the multi-index
with $j$th entry given by $\nu$ if $j \in \setv$, $\tau_j$ if $j \in \setu \setminus \setv$
and 0 otherwise. For $f \in \calW_{s, \bsgamma}^{\nu, r}$, an order $\nu$
interlaced polynomial lattice rule using $N$ points in $s$ dimensions
can be constructed using a CBC algorithm
such that the (deterministic) error converges at a rate $N^{-\eta}$ for $1\leq \eta < \nu$
(see \cite[Theorem~3.10]{DKLeGNS14}).

Let $\nu_p = \lfloor 1/p\rfloor + 1$ for $p< 1$ as in Assumption~A\ref{asm:coeff} and 
$1 \leq r \leq \infty$, 
then it follows from \eqref{eq:dlambda} that $\lambda_s \in \calW_{s, \bsgamma}^{\nu_p, r}$
for all $s$. Hence, the error of a single level QMC approximation of $\bbE_\bsy[\lambda_s]$ using
an order $\nu_p$ interlaced polynomial lattice rule will converge as $N^{-1/p}$.
Similarly, the ML analysis can be extended to show that a ML-HOQMC method
achieves higher order convergence in $N_\ell$,
where in this case we choose the interlacing factor to be $\nu_q = \lfloor 1/q \rfloor + 1$
for $q < 1$ as in Assumption~A\ref{asm:coeff}.
Indeed, \eqref{eq:dlam_fe} implies that the bound \eqref{eq:fe_W_lam} 
can easily be extended to $\calW_{s, \bsgamma}^{\nu_q, r}$
and \eqref{eq:dlambda} implies that \eqref{eq:trunc_W_lam} can also be extended
to $\calW_{s, \bsgamma}^{\nu_q, r}$ for all $s$. In both cases, the sums over $\setu$ on the
right hand sides need to be updated to account for the form of \eqref{eq:W^nu}, 
but the exponents of $h$ and $s$ remain the same.
Hence, by following the proof of Theorem~\ref{thm:Var_ell} it can be shown that
the following deterministic analogue of the variance bound \eqref{eq:Var_ell} holds\
for interlaced polynomial lattice rules.

\begin{theorem}
\label{thm:ho-diff}
Suppose that Assumption~A\ref{asm:coeff} holds with $p < q < 1$. For $\ell \in \N$,
let $Q_\ell^\mathrm{HO}$ be an interlaced polynomial lattice rule,
constructed using a CBC algorithm with 
$N_\ell$ a prime power number of points and interlacing factor $\nu_q = \lfloor 1/q \rfloor + 1$.
Then $Q^\mathrm{HO}_\ell$ satisfies
\begin{equation}
\label{eq:ho-diff}
\big|Q_\ell^\mathrm{HO}(\lambda_\ell - \lambda_{\ell - 1})\big|
\,\lesssim\, N_\ell^{-1/q}\big(h_\ell^2 + s_\ell^{-1/p + 1/q}\big),
\end{equation}
where the implied constant is independent of $h_\ell$, $s_\ell$ and $N_\ell$.
\end{theorem}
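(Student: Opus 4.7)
The plan is to follow the same three-stage blueprint as the proof of Theorem~\ref{thm:Var_ell}, but replacing every first-order ingredient with its higher-order analogue. First I would invoke the deterministic CBC error bound for interlaced polynomial lattice rules (cf.~\cite[Theorem~3.10]{DKLeGNS14}), which, for an integrand $f\in\calW^{\nu_q, r}_{s_\ell, \bsgamma}$, gives
\[
|\calI_{s_\ell} f - Q_\ell^\mathrm{HO} f| \,\leq\, C_{\bsgamma, \nu_q}\, N_\ell^{-1/q}\, \|f\|_{\calW^{\nu_q, r}_{s_\ell, \bsgamma}},
\]
with $C_{\bsgamma,\nu_q}$ finite provided the weights are chosen appropriately. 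Applied to $f = \lambda_\ell - \lambda_{\ell-1}$ and combined with the fact that $Q_\ell^\mathrm{HO}$ is unbiased-free only in the deterministic sense (so we bound the total error, not just the variance), this reduces the problem to bounding $\|\lambda_\ell-\lambda_{\ell-1}\|_{\calW^{\nu_q,r}_{s_\ell,\bsgamma}}$.

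Next I would split this norm via the triangle inequality, exactly as in \eqref{eq:err_ell_decomp}, into a truncation piece $\|\lambda_{s_\ell}-\lambda_{s_{\ell-1}}\|_{\calW^{\nu_q,r}_{s_\ell,\bsgamma}}$ and two FE pieces $\|\lambda_{s_\ell}-\lambda_{h_\ell,s_\ell}\|$ and $\|\lambda_{s_{\ell-1}}-\lambda_{h_{\ell-1},s_{\ell-1}}\|$ in the corresponding higher-order norms. For the FE pieces, I would plug the derivative bound \eqref{eq:dlam_fe} of Theorem~\ref{thm:du-fe} into the norm \eqref{eq:W^nu}: since \eqref{eq:dlam_fe} is valid for arbitrary multi-indices $\bsnu$ and in particular for $\bsnu = (\bsnu_\setv, \bstau_{\setu\setminus\setv}, \bszero)$ with $\nu_j\le \nu_q$, this yields an FE bound of the form
\[
\|\lambda_{s_\ell} - \lambda_{h_\ell,s_\ell}\|_{\calW^{\nu_q,r}_{s_\ell,\bsgamma}} \,\lesssim\, h_\ell^2 \max_{\setu\subseteq\{1:s_\ell\}} \frac{1}{\gamma_\setu}\bigg(\sum_{\setv\subseteq\setu}\sum_{\bstau}\big(|(\bsnu_\setv,\bstau,\bszero)|!^{1+\upepsilon}\bsbetahat^{(\bsnu_\setv,\bstau,\bszero)}\big)^r\bigg)^{1/r},
\]
i.e.\ an $h_\ell^2$ prefactor times an $s_\ell$-dependent seminorm sum.

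For the truncation piece I would extend Theorem~\ref{thm:trunc-err} in the same way: write $\lambda_{s_\ell}-\lambda_{s_{\ell-1}}$ via the Taylor identity \eqref{eq:truncerr_exact}, differentiate up to order $\nu_q$ in each coordinate, and substitute the higher-order derivative bound \eqref{eq:dlambda}. This reproduces the tail sum estimate \eqref{eq:tail_sum} times $s_{\ell-1}^{-(1/p-1/q)}$ via the same weighted argument as in \eqref{eq:trunc_weights}, only with the factorials and $\bsbeta$-products rearranged to match \eqref{eq:W^nu} rather than \eqref{eq:W-norm}.

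Finally, I would choose $\bsgamma$ (an analogue of \eqref{eq:gamma} adapted to the higher-order space, scaled by $\nu_q$ and with exponent governed by $q$) so that the constant $C_{\bsgamma,\nu_q}$ from the CBC bound is finite and the sums appearing after the FE/truncation estimates are bounded uniformly in $s_\ell$. The main obstacle, as in Theorem~\ref{thm:Var_ell}, lies exactly here: the higher-order norm \eqref{eq:W^nu} introduces extra inner sums over $\setv$ and $\bstau\in\{1{:}\nu_q\}^{|\setu\setminus\setv|}$, which inflate the combinatorial factors, and one must apply H\"older's inequality as in the proof of Theorem~\ref{thm:Var_ell} (with an auxiliary sequence $\bsalpha\in\ell^1$ built from $\bsgamma^q$) to ensure the resulting sum over $\setu$ converges. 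Once the weights are shown to satisfy a summability condition analogous to \eqref{eq:sum_gamma}, the constant is $s_\ell$-independent and combining the three pieces with the CBC bound gives exactly \eqref{eq:ho-diff}.
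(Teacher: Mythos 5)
Your proposal follows essentially the same route as the paper, which itself only sketches this proof: apply the deterministic CBC error bound for interlaced polynomial lattice rules from \cite[Theorem~3.10]{DKLeGNS14}, split $\lambda_\ell-\lambda_{\ell-1}$ via the triangle inequality as in \eqref{eq:err_ell_decomp}, and extend the FE bound \eqref{eq:fe_W_lam} and the truncation bound \eqref{eq:trunc_W_lam} to $\calW^{\nu_q,r}_{s,\bsgamma}$ by inserting the higher-order derivative bounds \eqref{eq:dlam_fe} and \eqref{eq:dlambda}, with the exponents of $h_\ell$ and $s_\ell$ unchanged. The only notable deviation is the choice of weights for the $s_\ell$-uniformity of the constant: you propose adapting the POD-type weights \eqref{eq:gamma} with the H\"older argument from the proof of Theorem~\ref{thm:Var_ell}, whereas the paper follows \cite{DKLeGS16} and uses the SPOD weights of \cite[eq.~(3.17)]{DKLeGNS14}, which are the standard device matching the structure of the higher-order norm \eqref{eq:W^nu} and guaranteeing the full rate $N_\ell^{-1/q}$ with an $s_\ell$-independent constant.
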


The fact that the implied constant in \eqref{eq:ho-diff} is independent of $s_\ell$
can be shown by following similar arguments as in \cite{DKLeGS16} 
using a special form of $\gamma_\setu$ called 
\emph{smoothness-driven, product and order-dependent
(SPOD)} weights, as introduced in \cite[eq. (3.17)]{DKLeGNS14}. 
Thus the following deterministic version of Theorem~\ref{thm:ML_abstract_lam} holds
for the error of the ML-HOQMC approximation.

\begin{theorem}
\label{thm:mlhoqmc}
Suppose that Assumption~A\ref{asm:coeff} holds with $p < q < 1$, let $L \in \N$
and for $\ell = 0, 1, \ldots, L$ let $Q_\ell^\mathrm{HO}$ be an interlaced
polynomial lattice rule as in Theorem~\ref{thm:ho-diff}.
Then the multilevel HOQMC approximation $Q^\mathrm{MLHO}_L$ 
with quadrature rule $Q^\mathrm{HO}_\ell$ on each level satisfies
\begin{equation*}
\big|\bbE_\bsy[\lambda] - Q_L^\mathrm{MLHO}(\lambda)\big|
\,\lesssim\, h_L^2 + s_L^{-2/p + 1}
+ \sum_{\ell = 0}^L N_\ell^{-1/q} \big(h_\ell^2 + s_\ell^{-1/p + 1/q}\big),
\end{equation*}
where the implied constant is independent of $h_\ell, s_\ell$ and $N_\ell$ for all $\ell = 0, 1, \ldots, L$.
\end{theorem}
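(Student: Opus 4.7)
The plan is to follow the same template as in Theorems~\ref{thm:ML_abstract_lam} and \ref{thm:ML_abstract_G}, adapted to the fact that an interlaced polynomial lattice rule is a \emph{deterministic} quadrature rule. Consequently the mean-square-error decomposition \eqref{eq:err_decomp} collapses to a direct pointwise estimate and no variance term needs to be controlled. Using the telescoping definition $Q_L^{\mathrm{MLHO}}(\lambda) = \sum_{\ell=0}^L Q_\ell^{\mathrm{HO}}(\lambda_\ell - \lambda_{\ell-1})$ (with the usual convention $\lambda_{-1}=0$), together with the linearity of the expectation, I would first write
\[
\bbE_\bsy[\lambda] - Q_L^{\mathrm{MLHO}}(\lambda)
\,=\, \bbE_\bsy[\lambda - \lambda_L]
\,+\, \sum_{\ell=0}^L \big(\bbE_\bsy[\lambda_\ell - \lambda_{\ell-1}] - Q_\ell^{\mathrm{HO}}(\lambda_\ell - \lambda_{\ell-1})\big),
\]
and then apply the triangle inequality to split the error into a single bias term plus $L+1$ deterministic quadrature errors on the differences.

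The bias term $|\bbE_\bsy[\lambda - \lambda_L]|$ is controlled directly by \eqref{eq:bias_lam}, which gives $|\bbE_\bsy[\lambda - \lambda_L]| \lesssim h_L^2 + s_L^{-2/p+1}$. Each of the $L+1$ per-level quadrature errors $|\bbE_\bsy[\lambda_\ell - \lambda_{\ell-1}] - Q_\ell^{\mathrm{HO}}(\lambda_\ell - \lambda_{\ell-1})|$ is then bounded using Theorem~\ref{thm:ho-diff}, yielding the factor $N_\ell^{-1/q}(h_\ell^2 + s_\ell^{-1/p+1/q})$ on each level, with the implied constant independent of $h_\ell$, $s_\ell$ and $N_\ell$. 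Summing over $\ell$ and adding the bias immediately produces the claimed estimate.

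The combinatorial part of the argument is therefore essentially trivial; the real work sits inside Theorem~\ref{thm:ho-diff}, so that is where the main obstacle lies. Compared with the first-order lattice-rule case treated in Section~\ref{sec:err}, one must extend the FE estimate \eqref{eq:fe_W_lam} and the truncation estimate \eqref{eq:trunc_W_lam} from the first-order weighted Sobolev space $\calW_{s,\bsgamma}$ to the higher-order space $\calW^{\nu_q,r}_{s,\bsgamma}$ defined in \eqref{eq:W^nu}. Fortunately, the higher-order mixed derivative bounds needed for this are already supplied by Theorems~\ref{thm:regularity_y} and \ref{thm:du-fe}, and the exponents of $h_\ell$ and $s_\ell$ are unchanged — only the sums over $\setu$ must be re-examined to accommodate the form of \eqref{eq:W^nu}. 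Dimension-independence of the constant then requires a judicious choice of SPOD weights as in \cite[eq.~(3.17)]{DKLeGNS14} and a H\"older-type argument analogous to the one used in the proof of Theorem~\ref{thm:Var_ell}, after which the CBC error bound from \cite[Theorem~3.10]{DKLeGNS14} for interlaced polynomial lattice rules delivers the deterministic $N_\ell^{-1/q}$ convergence.
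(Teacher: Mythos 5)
Your proposal is correct and follows essentially the same route the paper intends: a telescoping decomposition into the bias term, bounded via \eqref{eq:bias_lam}, plus the $L+1$ deterministic per-level quadrature errors, each bounded by Theorem~\ref{thm:ho-diff} (whose content — extending \eqref{eq:fe_W_lam} and \eqref{eq:trunc_W_lam} to $\calW^{\nu_q,r}_{s,\bsgamma}$ with SPOD weights and invoking the CBC bound of \cite{DKLeGNS14} — is exactly where the paper also places the real work). Your observation that the deterministic nature of the rules removes the variance/mean-square layer of Theorems~\ref{thm:ML_abstract_lam}--\ref{thm:ML_abstract_G} matches the paper's reading of \eqref{eq:ho-diff} as a per-level error bound, so nothing is missing.
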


Similar arguments can also be used to obtain an error bound with the same convergence rates for 
$Q_L^\mathrm{MLHO}(\calG(u))$, i.e., for the approximation of the expected value of
smooth functionals of the eigenfunction.

\section{Conclusion}
We have presented a MLQMC algorithm for approximating the expectation 
of the eigenvalue of a random elliptic EVP, and then performed
a rigorous analysis of the error. The theoretical results clearly show that
for this problem the MLQMC method exhibits better complexity than both single level 
MC/QMC and MLMC.
In the companion paper \cite{GS21b}, we will present numerical results that
also verify this superior performance of MLQMC in practice. In that paper, we will in addition
present novel ideas on how to efficiently implement the MLQMC algorithm
for EVPs.

Other interesting avenues for future research would be to consider 
non-self adjoint EVPs, e.g., convection-diffusion problems, or
to use the multi-index MC framework from, e.g., \cite{DickFeiSchw19,HajNobTemp16} to 
separate the FE and dimension truncation approximations on each level.
In principle, the algorithm studied in this paper can also be applied to
the lognormal setting as in, e.g., \cite{KSSSU17}, i.e., where each coefficient
is the exponential of a Gaussian random field, by using QMC rules for
integrals on unbounded domains.
However, in this case, the difficulty for both single level and multilevel QMC
is that the coefficients are no longer uniformly bounded
from above and below. As such, it is possible that the spectral gap,
$\lambda_2(\bsy) - \lambda_1(\bsy)$, becomes arbitrarily small
for certain parameter values. 
Since all aspects of the method (the stochastic derivative bounds, the FE error, 
the performance of the eigenvalue solver etc.)
depend inversely on the spectral gap, then both the method and the theory fail
if the gap becomes arbitrarily small.
The technique for bounding the spectral gap in \cite{GGKSS19,GGSS20} 
fails in this case because the stochastic
parameters belong to an unbounded domain.
On the other hand,
we conjecture that the spectral gap only becomes small with low probability,
and so probabilistic arguments may be able to be used to bound the gap from below.
This is again another example of the differences between stochastic EVPs and source problems,
and such analysis would make for interesting future work.

\medskip
\noindent\textbf{Acknowledgements.} 
This work is supported by the Deutsche Forschungsgemeinschaft (German Research Foundation) 
under Germany’s Excellence Strategy EXC 2181/1 - 390900948 (the Heidelberg STRUCTURES 
Excellence Cluster). 

\bibliographystyle{plain}
\bibliography{mlqmc-evp}

\begin{appendix}
\section{Proofs of recursive bounds on derivatives of the FE error}

Here, we give the proofs of the recursive bounds on the derivatives of the
FE error from Section~\ref{sec:fe-err} (Lemmas~\ref{lem:dlam_fe_rec} and \ref{lem:du-fe_rec}),
which were key to the inductive steps in the proofs of the explicit bounds in
Theorem~\ref{thm:du-fe}. Throughout we omit the $\bsx$ and $\bsy$ dependence.

\begin{proof}[Proof of Lemma~\ref{lem:dlam_fe_rec} (eigenvalue bounds)]
Let $v = v_h \in V_h$ in the variational eigenproblem \eqref{eq:var-evp},
and then subtract the FE eigenproblem \eqref{eq:fe-evp}, with the same $v_h$, 
to give the following variational relationship between the two FE errors
\begin{equation}
\label{eq:var_fe_err}
\calA(u - u_h, v_h) \,=\, 
\lambda \calM(u - u_h, v_h) 
+ (\lambda - \lambda_h)\calM(u_h, v_h),
\end{equation}
which holds for all $v_h \in V_h$.

Differentiating \eqref{eq:var_fe_err} using the Leibniz general product rule, 
gives the following recursive formula
for the $\bsnu$th derivatives of the eigenvalue and eigenfunction errors
\begin{align*}
0 \,=\, &\calA(\pdy^\bsnu (u - u_h), v_h) - \lambda \calM(\pdy^\bsnu ( u - u_h), v_h)
- (\lambda  - \lambda_h) \calM(\pdy^\bsnu u_h, v_h)
\\
&+ \sum_{j = 1}^\infty \nu_j \bigg(\int_D a_j \nabla \big[\pdy^{\bsnu - \bse_j}(u - u_h)\big] \cdot \nabla v_h 
+ \int_D b_j \big[\pdy^{\bsnu - \bse_j} (u - u_h)\big]v_h\bigg)
\\
&-\,
\sum_{\substack{\bsm \leq \bsnu \\ \bsm \neq \bsnu}} \binom{\bsnu}{\bsm}
\big[ \pdy^{\bsnu - \bsm} \lambda \calM(\pdy^\bsm(u - u_h), v_h)
+ \pdy^{\bsnu - \bsm}(\lambda - \lambda_h) \calM(\pdy^\bsm u_h, v_h)\big].
\end{align*}
Adding extra terms and using the $\calA$-orthogonality of $\Ph$,
we can write this in the following more convenient form
\begin{align}
\label{eq:rec_var}
0 \,=\, &\calA(\Ph\pdy^\bsnu (u - u_h), v_h) 
- \lambda_h \calM(\Ph\pdy^\bsnu ( u - u_h), v_h)
\nonumber\\
&- (\lambda - \lambda_h) \calM(\pdy^\bsnu u, v_h)
- \lambda_h \calM(\pdy^\bsnu u - \Ph \pdy^\bsnu u, v_h)
\nonumber\\
&+ \sum_{j = 1}^\infty \nu_j \bigg(\int_D a_j \nabla \big[\pdy^{\bsnu - \bse_j}(u - u_h)\big] \cdot \nabla v_h 
+ \int_D b_j \big[\pdy^{\bsnu - \bse_j} (u - u_h)\big]v_h\bigg)
\nonumber\\
&-\,
\sum_{\substack{\bsm \leq \bsnu \\ \bsm \neq \bsnu}} \binom{\bsnu}{\bsm}
\big[\pdy^{\bsnu - \bsm} \lambda\calM(\pdy^\bsm(u - u_h), v_h)
+ \pdy^{\bsnu - \bsm}(\lambda - \lambda_h) \calM(\pdy^\bsm u_h, v_h)\big].
\end{align}

Letting $v_h = u_h$ in \eqref{eq:rec_var} and separating out the $\bsm = \bszero$ term,  
we obtain the following formula for the derivative of the eigenvalue error
\begin{align*}
\pdy^\bsnu (\lambda &- \lambda_h) =
(\lambda_h - \lambda) \calM(\pdy^\bsnu u, u_h)
- \lambda_h \calM(\pdy^\bsnu u - \Ph \pdy^\bsnu u, u_h)
- (\pdy^\bsnu\lambda) \calM(u - u_h, u_h)
\nonumber\\
&+ \sum_{j = 1}^\infty \nu_j \bigg(\int_D a_j \nabla \big[\pdy^{\bsnu - \bse_j}(u - u_h)\big] \cdot \nabla u_h 
+ \int_D b_j \big[\pdy^{\bsnu - \bse_j} (u - u_h)\big]u_h\bigg)
\nonumber\\
&-\,
\sum_{\substack{\bszero \neq \bsm \leq \bsnu \\ \bsm \neq \bsnu}} \binom{\bsnu}{\bsm}
\big[\pdy^{\bsnu - \bsm} \lambda\calM(\pdy^\bsm(u - u_h), u_h)
+ \pdy^{\bsnu - \bsm}(\lambda - \lambda_h) \calM(\pdy^\bsm u_h, u_h)\big],
\end{align*}
where we have used the fact that $u_h$ is normalised. Also
the first two terms in \eqref{eq:rec_var} cancel because the bilinear form is symmetric and 
$(\lambda_h, u_h)$ satisfy the FE eigenvalue problem \eqref{eq:fe-evp}
with $\Ph \pdy^\bsnu (u - u_h) \in V_h$ as a test function.

Taking the absolute value, then using the triangle and Cauchy--Schwarz inequalities
gives the upper bound
\begin{align*}
|\pdy^\bsnu (\lambda &- \lambda_h)| \,\leq\,
|\lambda - \lambda_h|\, \nrm{\pdy^\bsnu u}{\calM}
 + \lambda_h \nrm{\pdy^\bsnu u - \Ph \pdy^\bsnu u}{\calM}
+  |\pdy^\bsnu\lambda|\, \nrm{u - u_h}{\calM}
\\
&+ \sum_{j = 1}^\infty \nu_j \big[\nrm{a_j}{L^\infty} 
\nrm{\pdy^{\bsnu - \bse_j}(u - u_h)}{V} \nrm{u_h}{V}
+\nrm{b_j}{L^\infty} \nrm{\pdy^{\bsnu - \bse_j} (u - u_h)}{L^2}\nrm{u_h}{L^2}\big]
\\
&+\,
\sum_{\substack{\bszero \neq \bsm \leq \bsnu \\ \bsm \neq \bsnu}} \binom{\bsnu}{\bsm}
\big[|\pdy^{\bsnu - \bsm} \lambda| \,\nrm{\pdy^\bsm(u - u_h)}{\calM}
+ |\pdy^{\bsnu - \bsm}(\lambda - \lambda_h)| \,\nrm{\pdy^\bsm u_h}{\calM}\big],
\end{align*}
where we have again simplified by using $\nrm{u_h}{\calM} = 1$.
Then, using the equivalence of norms \eqref{eq:M_equiv} and the Poincar\'e
inequality \eqref{eq:poin}, we can bound the $\calM$- and $L^2$-norms
by the corresponding $V$-norms, to give
\begin{align*}
|\pdy^\bsnu (\lambda &- \lambda_h)| \,\leq\,
\sqrt{\frac{\amax}{\evalueLap}}\Big[|\lambda - \lambda_h| \,\nrm{\pdy^\bsnu u}{V} 
 + \lambdabar \nrm{\pdy^\bsnu u - \Ph \pdy^\bsnu u}{V}
+  |\pdy^\bsnu\lambda| \,\nrm{u - u_h}{V}\Big]
\\
&+ \ubar\bigg(1 + \frac{1}{\evalueLap}\bigg)\sum_{j = 1}^\infty \nu_j 
\frac{\beta_j}{C_\bsbeta}
\nrm{\pdy^{\bsnu - \bse_j}(u - u_h)}{V}
\\
&+\,
\sqrt{\frac{\amax}{\evalueLap}}\sum_{\substack{\bszero \neq \bsm \leq \bsnu \\ \bsm \neq \bsnu}} \binom{\bsnu}{\bsm}
\big[|\pdy^{\bsnu - \bsm} \lambda| \nrm{\pdy^\bsm(u - u_h)}{V}
+ |\pdy^{\bsnu - \bsm}(\lambda - \lambda_h)| \nrm{\pdy^\bsm u_h}{V}\big],
\end{align*}
where we have also used the upper bounds \eqref{eq:lam_bnd} and \eqref{eq:u_bnd},
and the definition of $\beta_j$ \eqref{eq:beta}.

Substituting in the upper bounds on the derivatives \eqref{eq:dlambda} and \eqref{eq:du_V},
the bound on the projection error \eqref{eq:du_proj_err}, and then
the bounds on the FE errors \eqref{eq:fe_lam} and \eqref{eq:fe_u},
we have the upper bound
\begin{align*}
|\pdy^\bsnu (\lambda - &\lambda_h)| \,\leq\,
\sqrt{\frac{\amax}{\evalueLap}}\big[C_{\lambda} h
\ubar\bsbeta^\bsnu+
\lambdabar C_{\calP}\overline{\bsbeta}^\bsnu + \overline{\lambda}C_{u}\bsbeta^\bsnu
\Big] h |\bsnu|!^{1 + \upepsilon}
\\ &
+ \frac{\ubar}{C_\bsbeta}\bigg(1 + \frac{1}{\evalueLap}\bigg)\sum_{j = 1}^\infty \nu_j 
\beta_j \nrm{\pdy^{\bsnu - \bse_j}(u - u_h)}{V}
+ \sqrt{\frac{\amax}{\evalueLap}}\sum_{\substack{\bszero \neq \bsm \leq \bsnu \\ \bsm \neq \bsnu}} \binom{\bsnu}{\bsm}
\\ & 
\cdot\Big[\overline{\lambda} |\bsnu - \bsm|!^{1 + \upepsilon} \bsbeta^{\bsnu - \bsm}
\nrm{\pdy^\bsm(u - u_h)}{V}
+ \overline{u}|\bsm|!^{1 + \upepsilon} \bsbeta^\bsm 
|\pdy^{\bsnu - \bsm}(\lambda - \lambda_h)|\Big].
\end{align*}
Note that we can simplify the sum on the last line using the
symmetry of the binomial coefficient, $\binom{n}{k} = \binom{n}{n - k}$,
as follows. First, we separate it into two sums
\begin{align}
\label{eq:sum_simplify}
\sum_{\substack{\bszero \neq \bsm \leq \bsnu \\ \bsm \neq \bsnu}} \binom{\bsnu}{\bsm}&
\Big[\overline{\lambda} |\bsnu - \bsm|!^{1 + \upepsilon} \bsbeta^{\bsnu - \bsm}
\nrm{\pdy^\bsm(u - u_h)}{V}
+ \overline{u}|\bsm|!^{1 + \upepsilon} \bsbeta^\bsm |\pdy^{\bsnu - \bsm}(\lambda - \lambda_h)|\Big]
\nonumber\\
=\,&
\lambdabar\sum_{\substack{\bszero \neq \bsm \leq \bsnu \\ \bsm \neq \bsnu}} \binom{\bsnu}{\bsm} |\bsnu - \bsm|!^{1 + \upepsilon} \bsbeta^{\bsnu - \bsm}
\nrm{\pdy^\bsm(u - u_h)}{V}
\nonumber\\&
+ \overline{u}
\sum_{\substack{\bszero \neq \bsm \leq \bsnu \\ \bsm \neq \bsnu}} \binom{\bsnu}{\bsm}
|\bsm|!^{1 + \upepsilon} \bsbeta^\bsm |\pdy^{\bsnu - \bsm}(\lambda - \lambda_h)|\Big]
\nonumber\\
=\,&
(\lambdabar + \overline{u})
\sum_{\substack{\bszero \neq \bsm \leq \bsnu \\ \bsm \neq \bsnu}} 
\binom{\bsnu}{\bsm} |\bsm|!^{1 + \upepsilon} \bsbeta^{\bsm}
\Big[\nrm{\pdy^{\bsnu - \bsm}(u - u_h)}{V}
+ |\pdy^{\bsnu - \bsm}(\lambda - \lambda_h)|\Big],
\end{align}
where to obtain the last equality we have simply relabelled the indices in the first sum.

Then, since $\beta_j \leq \betabar_j$ and $h$ is sufficiently small 
(i.e., $h \leq \hsuff$ with $\hsuff$ as in \eqref{eq:hbar}), the result 
\eqref{eq:dlam_fe_rec_a} holds. The constant is given by
\[
\CI \,\coloneqq\,
\max\bigg\{\sqrt{\frac{\amax}{\evalueLap}}\big[\ubar\hsuff C_{\lambda} +
\lambdabar(C_{\calP} + C_{u})
\big],\;
\frac{ \ubar}{C_\bsbeta}\bigg(1 + \frac{1}{\evalueLap}\bigg) ,\;
\sqrt{\frac{\amax}{\evalueLap}}(\lambdabar + \ubar)
\bigg\},
\]
which is  independent of $\bsy$, $h$ and $\bsnu$.

For the second result \eqref{eq:dlam_fe_rec_b}, 
using \cite[Lemma~3.1]{BO89} the eigenvalue error can also be written as
\[
\lambda - \lambda_h \,=\, - \calA(u - u_h, u - u_h) + \lambda\calM( u - u_h, u - u_h),
\]
which after taking the $\bsnu$th derivative becomes
\begin{align*}
\pdy^\bsnu(\lambda &- \lambda_h) \,=\,
- \sum_{\bsm \leq\bsnu} \binom{\bsnu}{\bsm}\calA(\pdy^{\bsnu - \bsm}(u - u_h), \pdy^\bsm(u - u_h))
\\
&- \sum_{j = 1}^\infty\sum_{\bsm \leq \bsnu - \bse_j} \nu_j \binom{\bsnu - \bse_j}{\bsm}
\bigg[\int_D a_j \nabla \pdy^{\bsnu - \bse_j - \bsm}(u - u_h)
\cdot \nabla \pdy^\bsm (u - u_h)
\\
&+ \int_D b_j \pdy^{\bsnu - \bse_j - \bsm} (u - u_h) \pdy^\bsm(u - u_h) \bigg]
\\
&+ \sum_{\bsm \leq \bsnu} \sum_{\bsk \leq \bsm}
\binom{\bsnu}{\bsm} \binom{\bsm}{\bsk} \pdy^{\bsnu - \bsm}\lambda
\calM(\pdy^{\bsm - \bsk}(u - u_h), \pdy^\bsk(u - u_h)).
\end{align*}

Taking the absolute value, then using the triangle, Cauchy--Schwarz 
and Poincar\'e \eqref{eq:poin} inequalities, along with the norm equivalences 
\eqref{eq:A_equiv}, \eqref{eq:M_equiv}, gives
\begin{align*}
&|\pdy^\bsnu(\lambda - \lambda_h)| \,\leq\,
\amax\bigg( 1 + \frac{1}{\evalueLap}\bigg)\sum_{\bsm \leq\bsnu} \binom{\bsnu}{\bsm}
\nrm{\pdy^{\bsnu - \bsm}(u - u_h)}{V} \nrm{\pdy^\bsm(u - u_h)}{V}
\\
&+ \bigg( 1 + \frac{1}{\evalueLap}\bigg) \sum_{j = 1}^\infty\sum_{\bsm \leq \bsnu - \bse_j} 
\nu_j \binom{\bsnu - \bse_j}{\bsm} \frac{\beta_j}{C_\bsbeta}
\nrm{ \pdy^{\bsnu - \bse_j - \bsm}(u - u_h)}{V}
\nrm{\pdy^\bsm (u - u_h)}{V}
\\
&+ \frac{\amax}{\evalueLap}\sum_{\bsm \leq \bsnu} \sum_{\bsk \leq \bsm}
\binom{\bsnu}{\bsm} \binom{\bsm}{\bsk} 
|\pdy^{\bsnu - \bsm} \lambda|
\nrm{\pdy^{\bsm - \bsk}(u - u_h)}{V} \nrm{\pdy^\bsk(u - u_h))}{V}.
\end{align*}
Finally, substituting in the upper bound \eqref{eq:dlambda} on the derivative of $\lambda$
gives the desired result \eqref{eq:dlam_fe_rec_b}. The constant is given by
\[
\CII \,\coloneqq\,
\big[1/C_\bsbeta + \amax(1 + \lambdabar)\big]\bigg(1 + \frac{1}{\evalueLap}\bigg),
\]
which is independent of $h$, $\bsy$ and $\bsnu$.
\end{proof}

\begin{proof}[Proof of Lemma~\ref{lem:du-fe_rec} (eigenfunction bound)]
We deal with the eigenfunction error
projected onto $V_h$, as opposed to $\pdy^\bsnu(u - u_h)$, because the latter belongs to
$V$ but not to $V_h$. 
As such, we first separate the error as
\begin{align}
\label{eq:du_proj_sep}
\nrm{\pdy^\bsnu(u - u_h)}{V} \,&\leq\,
\nrm{\Ph\pdy^\bsnu(u - u_h)}{V}  + \nrm{\pdy^\bsnu u -  \Ph\pdy^\bsnu u}{V}
\nonumber\\
&\leq\, 
\nrm{\Ph\pdy^\bsnu(u - u_h)}{V} + C_{\calP} h |\bsnu|!^{1 + \upepsilon} \bsbetabar^\bsnu
,
\end{align}
where in the second inequality we have used the bound \eqref{eq:du_proj_err}.

Similar to the proof of \cite[Lemma~3.4]{GGKSS19}, 
the bilinear form that acts on $\Ph \pdy^\bsnu (u - u_h)$ (namely,
$\calA -\lambda_h \calM$) is only coercive on the orthogonal complement
of the eigenspace corresponding to $\lambda_h$, which 
we denote by $E(\lambda_h)^\perp$. Hence, 
to obtain the recursive formula
for the derivative of the eigenfunction error, we first make the following
orthogonal decomposition.
The FE eigenfunctions form an orthogonal basis for $V_h$, and so we have
\begin{equation}
\label{eq:du_decomp}
\Ph \pdy^\bsnu (u - u_h) \,=\, \calM(\Ph \pdy^\bsnu (u - u_h), u_h) u_h + \varphi_h,
\end{equation}
where $\varphi_h \in E(\lambda_h)^\perp$. Then we can bound the norm by
\begin{equation}
\label{eq:du_nrm_decomp}
\nrm{\Ph \pdy^\bsnu (u - u_h)}{V} \,\leq\, 
|\calM(\Ph \pdy^\bsnu (u - u_h), u_h)| \nrm{u_h}{V}
+ \nrm{\varphi_h}{V}.
\end{equation}

To bound the first term in this decomposition \eqref{eq:du_nrm_decomp}, 
first observe that we can write
\begin{align}
\nonumber
\label{eq:M_Ph_du-du_h}
|\calM(\Ph \pdy^\bsnu (u - u_h), u_h)|
\,\leq\, &|\calM(\pdy^\bsnu u, u) - \calM(\pdy^\bsnu u_h, u_h)|\\
&+ |\calM(\pdy^\bsnu u, u - u_h)| + |\calM(\pdy^\bsnu u - \Ph \pdy^\bsnu u, u_h)|.
\end{align}

The first term on the right in \eqref{eq:M_Ph_du-du_h} 
can be bounded by differentiating the normalisation
equations $\nrm{u}{\calM} = 1$ and $\nrm{u_h}{\calM} = 1$ 
(see \cite[eq. (3.15)]{GGKSS19}) to give
\begin{align*}
\calM(\pdy^\bsnu u, &u) - \calM(\pdy^\bsnu u_h, u_h)
\\
\,&=\, -\frac{1}{2} \sum_{\substack{\bszero \neq \bsm \leq \bsnu \\ \bsm \neq \bsnu }}
\binom{\bsnu}{\bsm}
\big[\calM(\pdy^{\bsnu - \bsm}u, \pdy^\bsm u) - \calM(\pdy^{\bsnu - \bsm}u_h, \pdy^\bsm u_h)\big]
\\
&=\, -\frac{1}{2} \sum_{\substack{\bszero \neq \bsm \leq \bsnu \\ \bsm \neq \bsnu }}
\binom{\bsnu}{\bsm} \big[
\calM(\pdy^{\bsnu - \bsm}(u - u_h), \pdy^\bsm u) + \calM(\pdy^{\bsnu - \bsm}u_h, \pdy^\bsm (u - u_h))
\big].
\end{align*}
Then, using the triangle inequality, the Cauchy--Schwarz inequality, 
the equivalence of norms \eqref{eq:M_equiv} and the Poincar\'e  inequality \eqref{eq:poin},
gives the upper bound
\begin{align*}
|\calM&( \pdy^\bsnu u, u) - \calM(\pdy^\bsnu u_h, u_h)|
\\
&\leq\, 
\frac{\amax}{2\evalueLap} \sum_{\substack{\bszero \neq \bsm \leq \bsnu \\ \bsm \neq \bsnu }}
\binom{\bsnu}{\bsm} \big[\nrm{\pdy^{\bsnu - \bsm}(u - u_h)}{V} \nrm{\pdy^\bsm u}{V}
+ \nrm{\pdy^{\bsnu - \bsm}u_h}{V} \nrm{\pdy^\bsm (u - u_h)}{V}\big]
\\
&\leq\, 
\ubar\frac{\amax}{2\evalueLap} \sum_{\substack{\bszero \neq \bsm \leq \bsnu \\ \bsm \neq \bsnu }}
\binom{\bsnu}{\bsm} \big[
|\bsm|!^{1 + \upepsilon} \bsbeta^\bsm \nrm{\pdy^{\bsnu - \bsm}(u - u_h)}{V} 
\\
&\qquad\qquad\qquad
+ |\bsnu - \bsm|!^{1 + \upepsilon} \bsbeta^{\bsnu -\bsm}\nrm{\pdy^\bsm (u - u_h)}{V}
\big]
\\
&
=\, 
\ubar\frac{\amax}{\evalueLap} \sum_{\substack{\bszero \neq \bsm \leq \bsnu \\ \bsm \neq \bsnu }}
\binom{\bsnu}{\bsm} 
|\bsm|!^{1 + \upepsilon} \bsbeta^\bsm \nrm{\pdy^{\bsnu - \bsm}(u - u_h)}{V} ,
\end{align*}
where for the second last inequality we have used the upper bound \eqref{eq:du_V} and
the analogous bound for $u_h$.
For the equality on the last line,  we have simplified the sum using the symmetry of the 
binomial coefficient as in \eqref{eq:sum_simplify}.

To bound the second and third terms in \eqref{eq:M_Ph_du-du_h} 
we use the Cauchy--Schwarz inequality, 
the equivalence of norms \eqref{eq:M_equiv}, and
the Poincar\'e inequality \eqref{eq:poin}, followed by the bound on the projection error
\eqref{eq:du_proj_err} and the bound on the FE error \eqref{eq:fe_u}, which gives
\begin{align}
\label{eq:M_of_Phdu_0}
|\calM(\pdy^\bsnu u, u - u_h)| &+ |\calM(\pdy^\bsnu u - \Ph \pdy^\bsnu u, u_h)|
\nonumber\\
\,&\leq\, \nrm{\pdy^\bsnu u}{\calM} \nrm{u - u_h}{\calM} 
+ \nrm{\pdy^\bsnu u - \Ph \pdy^\bsnu u}{\calM}
\nonumber\\
&\leq\, \frac{\amax}{\evalueLap}\nrm{\pdy^\bsnu u}{V} \nrm{u - u_h}{V} 
+ \sqrt{\frac{\amax}{\evalueLap}}\nrm{\pdy^\bsnu u - \Ph \pdy^\bsnu u}{V}
\nonumber\\
&\leq\,
\frac{\amax}{\evalueLap} C_{u} h |\bsnu|!^{1 + \upepsilon} \bsbeta^\bsnu
+ \sqrt{\frac{\amax}{\evalueLap}} C_{\calP} h |\bsnu|!^{1 + \upepsilon} \bsbetabar^\bsnu
\nonumber\\
&\leq\, 
 \bigg(\frac{\amax}{\evalueLap} C_{u} + \sqrt{\frac{\amax}{\evalueLap}}C_{\calP} \bigg)
h |\bsnu|!^{1 + \upepsilon} \bsbetabar^\bsnu,
\end{align}
where in the last inequality we have used that $\beta_j \leq \betabar_j$.

Substituting these two bounds into \eqref{eq:M_Ph_du-du_h} then multiplying by 
$\|u_h\|_V$ gives the following upper bound
on the first term of the decomposition \eqref{eq:du_nrm_decomp} 
\begin{align}
\label{eq:M_of_Phdu}
\nonumber
|\calM(\Ph \pdy^\bsnu (u - u_h), u_h)|\,\|u_h\|_V
\,\leq\,&
\ubar \bigg(\frac{\amax}{\evalueLap} C_{u} + \sqrt{\frac{\amax}{\evalueLap}}C_{\calP} \bigg)
h |\bsnu|!^{1 + \upepsilon} \bsbetabar^\bsnu
\\
&+ \ubar^2\frac{\amax}{\evalueLap} \sum_{\substack{\bszero \neq \bsm \leq \bsnu \\ \bsm \neq \bsnu }}
 |\bsm|!^{1 + \upepsilon} \bsbeta^\bsm \nrm{\pdy^{\bsnu - \bsm}(u - u_h)}{V} .
\end{align}
Note that we have also used \eqref{eq:u_bnd}.

Next, to bound the norm of $\varphi_h$ (the second term in the decomposition 
\eqref{eq:du_nrm_decomp}), we let $v_h = \varphi_h$ in \eqref{eq:rec_var}
and then rearrange the terms to give
\begin{align}
\label{eq:phi_h_0}
\calA(&\Ph \pdy^\bsnu (u - u_h), \varphi_h) - \lambda_h \calM(\Ph \pdy^\bsnu (u - u_h), \varphi_h)
\,=\,
(\lambda  - \lambda_h) \calM(\pdy^\bsnu u, \varphi_h)
\nonumber\\
&+ \lambda_h \calM(\pdy^\bsnu u - \Ph \pdy^\bsnu u, \varphi_h)
+ \pdy^\bsnu \lambda \calM( u - u_h, \varphi_h) 
+ \pdy^\bsnu( \lambda - \lambda_h) \calM( u_h, \varphi_h)
\nonumber\\
&- \sum_{j = 1}^\infty \nu_j \bigg(\int_D a_j \nabla \big[\pdy^{\bsnu - \bse_j}(u - u_h)\big] \cdot 
\nabla \varphi_h
+ \int_D b_j \big[\pdy^{\bsnu - \bse_j} (u - u_h)\big]\varphi_h\bigg)
\\\nonumber
&+\,
\sum_{\substack{\bszero \neq \bsm \leq \bsnu \\ \bsm \neq \bsnu}} \binom{\bsnu}{\bsm}
\big[\pdy^{\bsnu - \bsm} \lambda\calM(\pdy^\bsm(u - u_h), \varphi_h)
+ \pdy^{\bsnu - \bsm}(\lambda - \lambda_h) \calM(\pdy^\bsm u_h, \varphi_h)\big].
\end{align}

Again using the decomposition \eqref{eq:du_decomp} and the fact that 
$u_h$ satisfies the eigenproblem \eqref{eq:fe-evp} with $\varphi_h \in V_h$
as a test function, the left hand side of \eqref{eq:phi_h_0}
simplifies to $\calA(\varphi_h, \varphi_h) - \lambda_h \calM(\varphi_h, \varphi_h)$.
Since $\varphi_h \in E(\lambda_h)^\perp$, we can use the FE version of the coercivity estimate
\cite[Lemma~3.1]{GGKSS19} (see also Remark~3.2 that follows), to bound this from below by
\begin{equation}
\label{eq:phi_h_lower}
 \calA(\varphi_h, \varphi_h) - \lambda_h \calM(\varphi_h, \varphi_h)
 \,\geq\, \amin \bigg(\frac{\lambda_{2, h} - \lambda_h}{\lambda_{2, h}} \bigg)
 \nrm{\varphi_h}{V}^2
 \,\geq\, \frac{\amin \rho}{ 2\overline{\lambda_2}} \nrm{\varphi_h}{V}^2,
\end{equation}
where in the last inequality we have used the upper bound \eqref{eq:lam_bnd},
along the lower bound \eqref{eq:gap_h} on the FE spectral gap,
which is applicable for $h$ sufficiently small.

Taking the absolute value, the right hand side of \eqref{eq:phi_h_0} can be
bounded using the triangle inequality, the Cauchy--Schwarz inequality,
the equivalence of norms \eqref{eq:M_equiv}, and the Poincar\'e inequality \eqref{eq:poin},
which, combined with the lower bound  \eqref{eq:phi_h_lower}, gives
\begin{align*}
&\frac{\amin \rho}{ 2\overline{\lambda_2}} \nrm{\varphi_h}{V}^2 \,\leq\, 
\frac{\amax}{\evalueLap} \Big(|\lambda  - \lambda_h| \nrm{\pdy^\bsnu u}{V} 
+\lambdabar\nrm{\pdy^\bsnu u - \Ph \pdy^\bsnu u}{V} \\
&+ |\pdy^\bsnu \lambda| \nrm{u - u_h}{V} 
+ |\pdy^\bsnu( \lambda - \lambda_h)| \nrm{u_h}{V} \Big) \|\varphi_h\|_{V}
\\
&+ \sum_{j = 1}^\infty \nu_j \bigg(\nrm{a_j}{L^\infty} + \frac{1}{\evalueLap} \nrm{b_j}{L^\infty} \bigg) 
\nrm{\pdy^{\bsnu - \bse_j} (u - u_h)}{V} \nrm{\varphi_h}{V}
\\
&+\,
\frac{\amax}{\evalueLap}
\sum_{\substack{\bszero \neq\bsm \leq \bsnu \\ \bsm \neq \bsnu}}
\binom{\bsnu}{\bsm}
\big(|\pdy^{\bsnu - \bsm} \lambda | \nrm{\pdy^\bsm(u - u_h)}{V}
+ |\pdy^{\bsnu - \bsm}(\lambda - \lambda_h)| \nrm{\pdy^\bsm u_h}{V}\big) \nrm{ \varphi_h}{V}.
\end{align*}
Dividing through by $\amin \rho/(2 \overline{\lambda_2})\nrm{\varphi_h}{V}$,
then using the bounds \eqref{eq:fe_lam}, \eqref{eq:dlambda}, \eqref{eq:du_V}, \eqref{eq:du_proj_err}, along with the fact that that $\beta_j \leq \betabar_j$ for all $j \in \N$
and $h \leq \hsuff$, 
we have that the norm of $\varphi_h$ is bounded by
\begin{align}
\label{eq:phi_h_bound}
&\nrm{\varphi_h}{V} \,\leq\, 
\frac{\amax}{\amin} \frac{2\overline{\lambda_2}}{\rho\evalueLap} 
\Big[\ubar \, \hsuff C_{\lambda}    + \lambdabar C_{\calP} + \lambdabar C_{u} \Big]
h |\bsnu|!^{1 + \upepsilon} \bsbetabar^\bsnu 
+ \frac{\amax}{\amin}\frac{2\ubar\overline{\lambda_2}}{\rho\evalueLap} 
|\pdy^\bsnu( \lambda - \lambda_h)|
\\\nonumber
&+ \frac{2\overline{\lambda_2}}{\amin \rho C_\bsbeta} \bigg(1 + \frac{1}{\evalueLap}\bigg)
\sum_{j = 1}^\infty \nu_j \beta_j
\nrm{\pdy^{\bsnu - \bse_j} (u - u_h)}{V}
\\\nonumber
&+\,
\frac{\amax}{\amin}\frac{2\overline{\lambda_2}}{\rho\evalueLap} 
(\lambdabar + \ubar)
\sum_{\substack{\bszero \neq \bsm \leq \bsnu \\ \bsm \neq \bsnu}} \binom{\bsnu}{\bsm} 
|\bsm|!^{1 + \upepsilon} 
\bsbeta^\bsm 
\Big[\nrm{\pdy^{\bsnu - \bsm}(u - u_h)}{V}+ 
\|\pdy^{\bsnu - \bsm}(\lambda - \lambda_h)| \Big].
\end{align}
Note that to get the sum on the last line we have again simplified 
similarly to \eqref{eq:sum_simplify}.

Substituting the bounds \eqref{eq:du_nrm_decomp}, followed by \eqref{eq:M_of_Phdu}
and \eqref{eq:phi_h_bound}, into the decomposition \eqref{eq:du_proj_sep}
gives the following recursive bound on the derivative of the eigenfunction error
\begin{align*}
&\nrm{\pdy^\bsnu (u - u_h)}{V} 
\\
&\leq\, 
\bigg[\frac{\amax}{\amin} \frac{2\overline{\lambda_2}}{\rho\evalueLap} 
\big(\ubar \hsuff C_{\lambda}  + \lambdabar C_{\calP} 
+ \lambdabar C_{u} \big)
+ \ubar\bigg(\frac{\amax}{\evalueLap} C_{u} + \sqrt{\frac{\amax}{\evalueLap}}C_{\calP} \bigg)
 + C_{\calP}\bigg]
h |\bsnu|!^{1 + \upepsilon} \bsbetabar^\bsnu
\\
&+ \frac{\amax}{\amin} \frac{2\ubar\overline{\lambda_2}}{\rho\evalueLap}
 |\pdy^\bsnu (\lambda - \lambda_h)| 
+ \frac{2\overline{\lambda_2}}{\amin \rho C_\bsbeta} \bigg(1 + \frac{1}{\evalueLap}\bigg)
\sum_{j = 1}^\infty \nu_j \beta_j
\nrm{\pdy^{\bsnu - \bse_j} (u - u_h)}{V}
\\
&+\,
\frac{\amax}{\amin} \frac{2\overline{\lambda_2}}{\rho\evalueLap} (\lambdabar + \ubar)
\sum_{\substack{\bszero \neq \bsm \leq \bsnu \\ \bsm \neq \bsnu}}
\binom{\bsnu}{\bsm} |\bsm|!^{1 + \upepsilon} \bsbeta^\bsm 
\Big[\nrm{\pdy^{\bsnu - \bsm}(u - u_h)}{V}+ 
\|\pdy^{\bsnu - \bsm}(\lambda - \lambda_h)| \Big]
\\ \nonumber
&+ \ubar^2\frac{\amax}{\evalueLap} 
\sum_{\substack{\bszero \neq \bsm \leq \bsnu \\ \bsm \neq \bsnu }}
\binom{\bsnu}{\bsm}
 |\bsm|!^{1 + \upepsilon} \bsbeta^\bsm \nrm{\pdy^{\bsnu - \bsm}(u - u_h)}{V} .
\end{align*}
Observe that all of the constant terms are independent of $\bsy$, $h$ and $\bsnu$.

To obtain the final result with a right hand side that does not depend on any derivative
of order $\bsnu$, we now substitute the recursive formula \eqref{eq:dlam_fe_rec_a} for 
$\pdy^\bsnu(\lambda - \lambda_h)$. 
After grouping the similar terms and collecting all of the constants into $\CIII$ 
we have the final result. Since $\CI$ from \eqref{eq:dlam_fe_rec_a} and 
all of the constants above are independent of $\bsy$, $h$, and $\bsnu$,
the final constant $\CIII$ is as well.
\end{proof}

\end{appendix}

\end{document}